\newtheorem{Fiebigtheorem}{Theorem}[section]
\newtheorem{Fiebiglemma}[Fiebigtheorem]{Lemma}
\newtheorem{Fiebigproposition}[Fiebigtheorem]{Proposition}
\newtheorem{Fiebigexamples}[Fiebigtheorem]{Examples}
\newtheorem{Fiebigdefinition}[Fiebigtheorem]{Definition}
\newtheorem{Fiebigremark}[Fiebigtheorem]{Remark}
\begin{document}

\title[Subquotient categories of  $\mathcal{O}$]{Subquotient categories of the affine category $\mathcal{O}$ at the critical level}

\author{Peter Fiebig}

 \address{Department Mathematik, FAU Erlangen--N\"urnberg, Cauerstra\ss e 11, 91058 Erlangen}
\email{fiebig@math.fau.de}

\begin{abstract}
We introduce subquotient categories of the restricted category $\mathcal{O}$ over an affine Kac--Moody algebra at the critical level and show that some of them have a realization in terms of moment graph sheaves. 
\end{abstract}
\maketitle

\section{Introduction}
 In \cite{S} Soergel introduced the subquotient category ``around the Steinberg point'' of the category of rational representations of a reductive algebraic group in positive characteristics, and related its structure to the category of modules over the associated ``algebra of coinvariants''. This subquotient category is nowadays sometimes called the {\em modular category ${\mathcal{O}}$}.  It is a quite important category, as it carries some information on the irreducible characters of the algebraic group, and it is in this category that the counterexamples to Lusztig's character formula were found by Geordie Williamson (\cite{W}), using the theory of {\em Soergel bimodules}.
 
 The restricted category ${\mathcal{O}}$ of an affine Kac--Moody algebra  at the critical level is a characteristic zero relative of the category of  modular representations (of a Lie algebra), hence it is plausible that a construction similar to Soergel's can be carried out  in this case as well. This is what we are going to establish here. We consider several subquotient categories and relate those that correspond to the support of a {\em multiplicity free} projective object that satisfies a further condition (see Section \ref{sec-ST}), to moment graph theory. Following the treatment in \cite{FieAdv} we actually work in a deformed setting and establish a localization result for all objects that admit a Verma flag. It is also convenient for us to replace the category of modules over the coinvariant algebra by the category of moment graph sheaves.

Note that a literal analogue of Soergel's result is in fact very easy to establish, i.e.\  if we consider a subquotient corresponding to the finite Weyl group, then a Harish-Chandra induction functor from the category ${\mathcal{O}}$ of the simple Lie algebra ${\mathfrak g}$ underlying the Kac--Moody algebra ${\widehat{\mathfrak g}}$ is easily shown to yield an equivalence, and a Soergel type functor in the case of ${\mathfrak g}$ can be found in \cite{FieAdv}. But the results in this paper go further (and are actually independent of the embedding  ${\mathfrak g}\subset {\widehat{\mathfrak g}}$). 

Here is a list of contents of this article. Section \ref{sec-basics} quickly introduces the basic objects and notations that we are working with. In particular, we discuss the restricted deformed category $\mathcal{O}$  and its projective objects. In Section \ref{sec-subquot} we review the construction of quotient categories of Gabriel and apply this to our situation. In Section \ref{sec-momgra} we define the relevant moment graphs, in Section \ref{sec-ST} we generalize Soergel's arguments to our situations. We prove the main Theorem, i.e.\  the equivalence that we described above,  in Section \ref{sec-mT}.

\vskip .2cm
\centerline{\bf Acknowledgements}
\vskip .1cm\noindent
The author was partially supported by the DFG grant SP1388.

\section{Basics}\label{sec-basics}
Let ${\mathfrak g}$ be a finite dimensional complex simple Lie algebra, and let ${\widehat{\mathfrak g}}$ be the associated non-twisted affine Kac--Moody algebra, i.e.\  
$$
{\widehat{\mathfrak g}}={\mathfrak g}\otimes_{\mathbb C}{\mathbb C}[t^{\pm 1}]\oplus{\mathbb C} K\oplus{\mathbb C} D
$$ 
with Lie bracket determined by 
\begin{align*}
[K,{\widehat{\mathfrak g}}] &= \{0\}, \\
[D, x\otimes t^n] &= n(x\otimes t^n), \\
[x\otimes t^n, y\otimes t^m]&=[x,y]\otimes t^{m+n}+n\delta_{m,-n} k(x,y) K
\end{align*}
for $x,y\in{\mathfrak g}$ and $m,n\in{\mathbb Z}$. Here, $k\colon{\mathfrak g}\times{\mathfrak g}\to{\mathbb C}$ denotes the Killing form and $\delta_{a,b}\in\{0,1\}$ the Kronecker delta.
With a fixed Borel subalgebra ${\mathfrak b}$ of ${\mathfrak g}$ we associate the affine Borel subalgebra
$$
{\widehat{\mathfrak b}}=({\mathfrak g}\otimes t{\mathbb C}[t]+{\mathfrak b}\otimes{\mathbb C}[t])\oplus{\mathbb C} K\oplus{\mathbb C} D,
$$
and with a fixed Cartan subalgebra ${\mathfrak h}\subset{\mathfrak b}$ we associate the affine Cartan subalgebra
$$
{\widehat{\mathfrak h}}={\mathfrak h}\oplus{\mathbb C} K\oplus{\mathbb C} D.
$$
\subsection{Affine roots} We denote by $V^\star$ the dual of a vector space $V$ and we write
$\langle\cdot,\cdot\rangle\colon V^\star\times V\to {\mathbb C}$ for the
canonical pairing. 
We denote by  $R \subset {\mathfrak h}^\star$ the set of roots of ${\mathfrak g}$ with respect to ${\mathfrak h}$, and by $R^+\subset R$ the subset of positive roots, i.e.\  the subset of  roots of ${\mathfrak b}$. The projection ${\widehat{\mathfrak h}}\to {\mathfrak h}$, $H\mapsto \overline H$, along the decomposition ${\widehat{\mathfrak h}}={\mathfrak h}\oplus {\mathbb C} K\oplus {\mathbb C} D$ allows us to embed ${\mathfrak h}^\star$ inside ${\widehat{\mathfrak h}}^\star$. If we define  $\delta\in {\widehat{\mathfrak h}^\star}$ by 
\begin{align*}
\langle\delta,{\mathfrak h}\oplus {\mathbb C} K\rangle & = \{0\}, \\ 
\langle\delta,D\rangle & = 1, 
\end{align*}
then the set $\widehat R\subset {\widehat{\mathfrak h}^\star}$ of roots of ${\widehat{\mathfrak g}}$ with respect to ${\widehat{\mathfrak h}}$ is 
$$
\widehat R=\{\alpha+n\delta \mid \alpha\in R,n\in{\mathbb Z}\}\cup\{n\delta\mid n\in {\mathbb Z}, n\ne 0\}.
$$
The set $\widehat R^{real}:=\{\alpha+n\delta\mid\alpha\in R,n\in{\mathbb Z}\}$ is called the set of affine {\em real} roots. 
Each affine real root $\beta$ has an associated affine coroot $\beta^\vee\in{\widehat{\mathfrak h}}$.
 The set $\widehat R^+$ of roots of ${\widehat{\mathfrak b}}$ with respect to ${\widehat{\mathfrak h}}$ is 
$$
\widehat R^{+}=\{\alpha+n\delta\mid \alpha\in R, n\ge 1\}\cup
 R^{+}\cup \{n\delta\mid n\ge 1\}.
$$
Let $\Pi\subset R^+$ be the set of simple roots and denote by
$\gamma\in R^+$ the highest root. Then the set of  simple affine roots is 
$$
\widehat\Pi=\Pi\cup\{-\gamma+\delta\}\subset \widehat R^+.
$$
We will use the partial order $\le$ on ${\widehat{\mathfrak h}^\star}$ that is defined by $\lambda\le\mu$ if and only if $\mu-\lambda$ can be written as a sum of elements in $\widehat R^+$.

The {\em affine Weyl group} is the subgroup $\widehat{{\mathcal W}}$  of $\operatorname{Aut}({\widehat{\mathfrak h}^\star})$ that is generated by the reflections $s_{\alpha+n\delta}\colon {\widehat{\mathfrak h}^\star}\to{\widehat{\mathfrak h}^\star}$, $\lambda\mapsto \lambda-\langle\lambda,(\alpha+n\delta)^\vee\rangle(\alpha+n\delta)$.

\subsection{Deformed representations of affine Kac--Moody algebras}
For the localization results in this article it is necessary that we work not in a ${\mathbb C}$-linear setting, but allow for more general underlying base rings. For more details we refer to \cite{FieMZ}. 
Let $S={S({\mathfrak h})}$ be the symmetric algebra of the vector space ${\mathfrak h}$. 
\begin{Fiebigdefinition} In this paper, a {\em deformation algebra} is a commutative, local, Noetherian and unital $S$-algebra.
\end{Fiebigdefinition} 
For a deformation algebra $A$ we denote by $\tau_A\colon {\widehat{\mathfrak h}}\to A$ the linear homomorphism $H\mapsto \overline H\cdot 1_A$.

\begin{Fiebigexamples} 
\begin{itemize} 
\item For any deformation algebra $A$ with maximal ideal ${\mathfrak m}$ the residue field $A/{\mathfrak m}$ with its induced $S$-algebra structure is a deformation algebra as well.
\item The most important examples of deformation algebras for us are the following: We denote by ${\widetilde S}$ the localization of $S$ at the maximal ideal $S{\mathfrak h}$, and for any prime ideal ${\mathfrak p}$ of ${\widetilde S}$ we denote by ${\widetilde S}_{\mathfrak p}$ the localization of ${\widetilde S}$ at ${\mathfrak p}$. 
\end{itemize}
\end{Fiebigexamples} 

\subsection{The deformed category ${\mathcal{O}}$}
Let $A$ be a deformation algebra and let $M$ be a ${\widehat{\mathfrak h}}$-$A$-bimodule. This means that $M$ is an $A$-module together with an $A$-linear action of ${\widehat{\mathfrak h}}$. For a weight $\lambda\in{\widehat{\mathfrak h}^\star}$ we define the $\lambda$-weight space of $M$ as 
$$
M_\lambda:=\{m\in M\mid H.m=(\lambda(H)+\tau_A(H))m\text{ for all $H\in{\widehat{\mathfrak h}}$}\}.
$$
Clearly, this is a sub-${\widehat{\mathfrak h}}$-$A$-bimodule. We say that $M$ is a {\em weight module} if $M=\bigoplus_{\lambda\in{\widehat{\mathfrak h}^\star}} M_\lambda$.


If  $M$ is  a ${\widehat{\mathfrak b}}$-$A$-bimodule, then we say that $M$ is {\em locally finite}, if every $m\in M$ lies in a sub-${\widehat{\mathfrak b}}$-$A$-bimodule that is finitely generated as an $A$-module. As $A$ is a Noetherian ring, this is equivalent to saying that $(U({\widehat{\mathfrak b}})\otimes A).m$ is a finitely generated $A$-module for any $m\in M$. Here, and in the following, we denote by $U({\mathfrak l})$ the universal enveloping algebra of a Lie algebra ${\mathfrak l}$.

\begin{Fiebigdefinition} For any deformation algebra $A$ we denote by ${\mathcal{O}}_A$ the full subcategory of the category of all ${\widehat{\mathfrak g}}$-$A$-bimodules that contains all objects $M$ that satisfy the following two properties:
\begin{itemize}
\item  as an ${\widehat{\mathfrak h}}$-$A$-bimodule, $M$ is a weight module,
\item  as an ${\widehat{\mathfrak b}}$-$A$-bimodule, $M$ is locally finite.
\end{itemize}
\end{Fiebigdefinition} 

From any $S$-linear homomorphism $A\to A^\prime$ of deformation algebras  we obtain a base change functor
$$
\cdot\otimes_A A^\prime\colon {\mathcal{O}}_A\to{\mathcal{O}}_{A^\prime}.
$$

\subsection{Homomorphisms and localizations}
Let ${\mathfrak p}\subset {\widetilde S}$ be any prime ideal. We simplify notation and write ${\mathcal{O}}_{\mathfrak p}$ instead of ${\mathcal{O}}_{{{\widetilde S}}_{\mathfrak p}}$. For any object $M$ in ${\mathcal{O}}_{{\widetilde S}}$ we denote by $M_{\mathfrak p}:=M\otimes_{{\widetilde S}} {{\widetilde S}}_{\mathfrak p}$ the object in ${\mathcal{O}}_{{\mathfrak p}}$ obtained by base change. 
Now, if $M$ and $N$ are objects in ${\mathcal{O}}_{{\widetilde S}}$ that are torsion free as ${\widetilde S}$-modules, then the natural homomorphism $\operatorname{Hom}_{{\mathcal{O}}_{{\widetilde S}}}(M,N)\to\operatorname{Hom}_{{\mathcal{O}}_{\mathfrak p}}(M_{\mathfrak p},N_{\mathfrak p})$ given by the base change functor ${\mathcal{O}}_{{\widetilde S}}\to{\mathcal{O}}_{{\mathfrak p}}$  is injective. In particular, we can view both $\operatorname{Hom}$-spaces as subspaces in $\operatorname{Hom}_{{\mathcal{O}}_{(0)}}(M_{(0)},N_{(0)})$. 

We denote by ${\mathfrak P}$ the set of prime ideals of ${\widetilde S}$ of height one. Note that ${\widetilde S}=\bigcap_{{\mathfrak p}\in{\mathfrak P}}  {{\widetilde S}}_{\mathfrak p}$, where the intersection is taken inside ${\operatorname{Quot}}\, {\widetilde S}={{\widetilde S}}_{(0)}$. Recall that we say that an ${\widetilde S}$-module is {\em reflexive} if it is the intersection of all its localizations at prime ideals of height one. The following result makes our theory work.

\begin{Fiebiglemma} \label{lemma-bcone} Suppose that $M,N$ are objects in ${\mathcal{O}}_{{\widetilde S}}$ and that $M$ is torsion free as an ${\widetilde S}$-module and $N$ is reflexive as an ${\widetilde S}$-module.  Then the natural homomorphism
$$
\operatorname{Hom}_{{\mathcal{O}}_{{\widetilde S}}}(M,N)\to\bigcap_{{\mathfrak p}\in{\mathfrak P}}\operatorname{Hom}_{{\mathcal{O}}_{{\mathfrak p}}}(M_{\mathfrak p},N_{\mathfrak p})
$$
is an isomorphism. 
\end{Fiebiglemma}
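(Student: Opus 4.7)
The plan is to decompose the claim into injectivity (which will follow immediately from the discussion preceding the lemma) and surjectivity (where the reflexivity of $N$ does the work). For injectivity, I would invoke the previous paragraph: since $M$ is torsion-free over ${\widetilde S}$ and $N$ is as well (reflexivity forces $N\hookrightarrow N_{(0)}$), the base-change map $\operatorname{Hom}_{{\mathcal{O}}_{{\widetilde S}}}(M,N) \to \operatorname{Hom}_{{\mathcal{O}}_{\mathfrak p}}(M_{\mathfrak p},N_{\mathfrak p})$ is injective for each ${\mathfrak p} \in {\mathfrak P}$, and the whole picture embeds compatibly into $\operatorname{Hom}_{{\mathcal{O}}_{(0)}}(M_{(0)},N_{(0)})$. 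This shows that the natural map lands in the intersection and is already injective there.

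For surjectivity, I would start with a compatible family $(\varphi_{\mathfrak p})_{{\mathfrak p}\in {\mathfrak P}}$, equivalently a single generic morphism $\varphi \in \operatorname{Hom}_{{\mathcal{O}}_{(0)}}(M_{(0)},N_{(0)})$ which, under the embeddings above, lies in every $\operatorname{Hom}_{{\mathcal{O}}_{\mathfrak p}}(M_{\mathfrak p}, N_{\mathfrak p})$. The natural candidate for a preimage is the restriction $\varphi|_M$, so the central task is to show that $\varphi$ maps $M$ into $N$ inside $N_{(0)}$. For any $m \in M$, the hypothesis gives $\varphi(m) \in N_{\mathfrak p}$ for every ${\mathfrak p} \in {\mathfrak P}$, and reflexivity then yields
$$
\varphi(m) \in \bigcap_{{\mathfrak p}\in {\mathfrak P}} N_{\mathfrak p} = N.
$$
The resulting map $\varphi|_M \colon M \to N$ automatically inherits ${\widehat{\mathfrak g}}$- and ${\widetilde S}$-linearity from $\varphi$, so it is a morphism in ${\mathcal{O}}_{{\widetilde S}}$, and by construction its image in the intersection is the prescribed $\varphi$.

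The main obstacle I anticipate is the compatibility of the reflexivity hypothesis with the ${\widehat{\mathfrak h}}$-weight decomposition. The module $N$ splits as $\bigoplus_{\lambda} N_\lambda$, base change preserves weight spaces and commutes with direct sums, and the embedding $N \hookrightarrow N_{(0)}$ respects the decomposition. One therefore needs the reflexivity identity to be readable weight-space by weight-space, so that the condition ``$\varphi(m) \in N_{\mathfrak p}$ for all ${\mathfrak p}$'' can be combined componentwise into ``$\varphi(m) \in N$''. Once this compatibility is recorded, the rest of the argument is a direct application of the definition of reflexive together with the hypothesis on the family $(\varphi_{\mathfrak p})$.
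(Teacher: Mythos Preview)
Your proof is correct and matches the paper's approach exactly: view an element of the intersection as a generic morphism $f\colon M_{(0)}\to N_{(0)}$ with $f(M_{\mathfrak p})\subset N_{\mathfrak p}$ for all ${\mathfrak p}\in{\mathfrak P}$, and then use reflexivity $N=\bigcap_{\mathfrak p} N_{\mathfrak p}$ to conclude $f(M)\subset N$. Your anticipated obstacle is not one: reflexivity is assumed for $N$ as an ${\widetilde S}$-module outright, so $\varphi(m)\in\bigcap_{\mathfrak p} N_{\mathfrak p}=N$ holds without any detour through the weight decomposition.
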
 
\begin{proof} This is easy to prove.  If $f\colon M_{(0)}\to N_{(0)}$ is such that $f(M_{\mathfrak p})\subset N_{\mathfrak p}$, then, as $N$ is reflexive, we have $f(M)\subset \bigcap f(M_{\mathfrak p})\subset \bigcap N_{\mathfrak p}=N$, hence $f$ induces a morphism from $M$ to $N$.
\end{proof}

\subsection{Deformed Verma modules and simple quotients}

For any weight $\lambda\in{\widehat{\mathfrak h}^\star}$ we denote by $A_\lambda$ the $A$-${\widehat{\mathfrak b}}$-bimodule that is free of rank $1$ as an $A$-module and on which ${\widehat{\mathfrak b}}$ acts via the character ${\widehat{\mathfrak b}}\to{\widehat{\mathfrak h}}\stackrel{\tau_A+\lambda}\to A$ (here the homomorphism ${\widehat{\mathfrak b}}\to{\widehat{\mathfrak h}}$ is a splitting of the inclusion). Then 
$$
\Delta_A(\lambda):=U({\widehat{\mathfrak g}})\otimes_{U({\widehat{\mathfrak b}})} A_\lambda
$$ 
is a ${\widehat{\mathfrak g}}$-$A$-bimodule and as such it is an object in ${\mathcal{O}}_A$. It has a unique simple quotient  $L_A(\lambda)$, and each simple object in ${\mathcal{O}}_A$ is isomorphic to $L_A(\lambda)$ for a unique $\lambda\in{\widehat{\mathfrak h}^\star}$ (cf.\  \cite{FieMZ}).

\subsection{Decomposition of ${\mathcal{O}}_A$ according to the central character}
For any $c\in{\mathbb C}$ set 
$$
{{\widehat{\mathfrak h}}^\star}_c:=\{\lambda\in {\widehat{\mathfrak h}^\star}\mid \langle\lambda,K\rangle=c\}.
$$
Let $M$ be a ${\widehat{\mathfrak g}}$-$A$-bimodule that is a weight module. Then the subspace $M_c:=\bigoplus_{\lambda\in{\widehat{\mathfrak h}^\star}_c}M_\lambda$ is a sub-${\widehat{\mathfrak g}}$-$A$-bimodule of $M$ and we have 
$$
M=\bigoplus_{c\in{\mathbb C}} M_c.
$$
We say that $M$ {\em is of level $c$} if $M=M_c$. We denote by ${\mathcal{O}}_{A,c}$ the full subcategory  of ${\mathcal{O}}_A$ that contains all objects of level $c$. 

Let $\rho\in{\widehat{\mathfrak h}^\star}$ be any element with the property that $\rho(\alpha^\vee)=1$ for each simple affine root $\alpha\in\widehat\Pi$. Even though $\rho$ is not uniquely determined, the number ${\operatorname{crit}}:=-\rho(K)\in{\mathbb C}$ is. It is called the {\em critical level}, and the category ${\mathcal{O}}_{A,{\operatorname{crit}}}$ is called the deformed category ${\mathcal{O}}$ {\em at the critical level}. In this article we focus on the structure of ${\mathcal{O}}_{{\widetilde S},{\operatorname{crit}}}$. There is a huge center acting on this category.

 \subsection{The restricted deformed category ${\mathcal{O}}$ at the critical level}
 
 For the following we refer to Section 5 in \cite{AF12} and the references mentioned there.
Let $V^{{\operatorname{crit}}}({\mathfrak g})$ be the vertex algebra associated with ${\mathfrak g}$ at the critical level. It contains a huge center ${\mathfrak z}({\mathfrak g})$. Moreover, the vertex algebra, and hence its center, acts naturally on any object in ${\mathcal{O}}_{A,{\operatorname{crit}}}$ by ${\mathfrak g}\otimes{\mathbb C}[t^{\pm 1}]\oplus {\mathbb C} K$-module homomorphisms, but the action does not commute with the action of $D$ in general. In fact, the commutator with $D$ yields a grading ${\mathfrak z}({\mathfrak g})=\bigoplus_{n\in{\mathbb Z}}{\mathfrak z}({\mathfrak g})_n$. 

\begin{Fiebigdefinition} We say that $M\in{\mathcal{O}}_{A,{\operatorname{crit}}}$ is {\em restricted} if all $z\in{\mathfrak z}({\mathfrak g})_n$ with $n\ne 0$ annihilate $M$. We denote by ${\overline{\mathcal O}} _{A,{\operatorname{crit}}}$ the full subcategory of ${\mathcal{O}}_{A,{\operatorname{crit}}}$ that contains all restricted objects.
\end{Fiebigdefinition}

For an object $M$ of ${\mathcal{O}}_{A,{\operatorname{crit}}}$ we denote by $M^\prime\subset M$ the sub-${\widehat{\mathfrak g}}$-$A$-bimodule that is generated by $z.M\subset M$ for all $z\in{\mathfrak z}({\mathfrak g})_n$ with $n\ne 0$. Then $\overline M:=M/M^\prime$ is the largest quotient of $M$ that is contained in ${\overline{\mathcal O}} _{A,{\operatorname{crit}}}$, and $M\mapsto \overline M$ defines a functor ${\mathcal{O}}_{A,{\operatorname{crit}}}\to{\overline{\mathcal O}} _{A,{\operatorname{crit}}}$ that is left adjoint to the inclusion functor.  In particular, we denote by
$$
{\overline{\Delta}} _A(\lambda):=\overline{\Delta_A(\lambda)}
$$
the restricted deformed Verma module with highest weight $\lambda$. Note that $\overline{L_A(\lambda)}=L_A(\lambda)$. Again we simplify notation and write ${\overline{\Delta}}_{\mathfrak p}(\lambda)$ instead of ${\overline{\Delta}}_{\widetilde S_{\mathfrak p}}(\lambda)$.

\subsection{The restricted block decomposition}\label{sec-blockdec}   If $M$ is an object in ${\mathcal{O}}_A$ and $\lambda\in{\widehat{\mathfrak h}^\star}$, we write $[M:L_A(\lambda)]\ne 0$ whenever there is a subobject $N$ of $M$ which admits an epimorphism $N\to L_A(\lambda)$. Let us denote by ${\mathbb K} =A/{\mathfrak m}$ the residue field of $A$. 
We denote by $\sim_A$ the equivalence relation on ${\widehat{\mathfrak h}^\star}_{{\operatorname{crit}}}$ that is generated by $\lambda\sim_A\mu$ if $[{\overline{\Delta}} _{A} (\lambda):L_{A} (\mu)]\ne 0$. Note that  we have $[{\overline{\Delta}} _{A} (\lambda):L_{A} (\mu)]\ne 0$ if and only if $[{\overline{\Delta}} _{{\mathbb K} } (\lambda):L_{{\mathbb K} } (\mu)]\ne 0$. 

For a union $\Omega\subset{\widehat{\mathfrak h}^\star}_{{\operatorname{crit}}}/\sim_A$ of equivalence classes we define ${\overline{\mathcal O}} _{A,\Omega}\subset{\overline{\mathcal O}} _{A,{\operatorname{crit}}}$ as the full subcategory that contains all objects $M$ with the property that $[M:L_A(\lambda)]\ne 0$ implies $\lambda\in\Omega$. Note that we have ${\overline{\Delta}} _A(\lambda)\in{\overline{\mathcal O}} _{A,\Omega}$ if and only if $\lambda\in\Omega$. The following is the restricted block decomposition:
\begin{Fiebigtheorem}  [\cite{AFLin}] The functor
\begin{align*}
\prod_{\Lambda\in{\widehat{\mathfrak h}^\star}_{{\operatorname{crit}}}/\sim_A}{\overline{\mathcal O}} _{A,\Lambda}&\to{\overline{\mathcal O}} _{A,{\operatorname{crit}}},\\
(M_\Lambda)&\mapsto \bigoplus_\Lambda M_\Lambda,
\end{align*}
is an equivalence of categories. 
\end{Fiebigtheorem}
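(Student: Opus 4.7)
The plan is to construct, for each object $M$ of ${\overline{\mathcal O}} _{A,{\operatorname{crit}}}$, a canonical decomposition $M=\bigoplus_\Lambda M_\Lambda$ with $M_\Lambda\in{\overline{\mathcal O}} _{A,\Lambda}$, and to show that $\operatorname{Hom}(M,N)=0$ whenever $M\in{\overline{\mathcal O}} _{A,\Lambda}$ and $N\in{\overline{\mathcal O}} _{A,\Lambda^\prime}$ with $\Lambda\ne\Lambda^\prime$. Together these two facts imply that the displayed functor is fully faithful and essentially surjective.

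The first step is to record the formal closure properties of the subcategories ${\overline{\mathcal O}} _{A,\Omega}$. A direct diagram chase with the definition via $[\,\cdot\,:L_A(\lambda)]\ne 0$ shows that each ${\overline{\mathcal O}} _{A,\Omega}$ is closed under sub-bimodules, quotients, arbitrary direct sums, and extensions inside ${\overline{\mathcal O}} _{A,{\operatorname{crit}}}$. Orthogonality of distinct blocks then follows by a highest-weight argument: if $f\colon M\to N$ is a nonzero morphism with $M\in{\overline{\mathcal O}} _{A,\Lambda}$ and $N\in{\overline{\mathcal O}} _{A,\Lambda^\prime}$, then the image $I=\operatorname{im} f$ lies in ${\overline{\mathcal O}} _{A,\Lambda}$ (as a quotient of $M$) and simultaneously in ${\overline{\mathcal O}} _{A,\Lambda^\prime}$ (as a sub of $N$). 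Using local finiteness of the ${\widehat{\mathfrak b}}$-action, any nonzero $m\in I$ sits in a finitely generated $A$-${\widehat{\mathfrak b}}$-sub-bimodule with only finitely many weights and hence with a maximal weight $\mu$; a nonzero vector at weight $\mu$ is a highest-weight vector and generates a cyclic quotient of ${\overline{\Delta}} _A(\mu)$ whose unique simple quotient is $L_A(\mu)$. Hence $[I:L_A(\mu)]\ne 0$, forcing $\mu\in\Lambda\cap\Lambda^\prime=\emptyset$, a contradiction.

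Given orthogonality, define $M_\Lambda\subset M$ to be the sum of all sub-bimodules of $M$ that lie in ${\overline{\mathcal O}} _{A,\Lambda}$; this is itself in ${\overline{\mathcal O}} _{A,\Lambda}$ by closure under direct sums and quotients. Applying orthogonality to the canonical projections out of any finite partial sum shows that the $M_\Lambda$ sum directly inside $M$. The theorem is therefore reduced to proving the equality $M=\sum_\Lambda M_\Lambda$.

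The main obstacle is this last equality: objects of ${\overline{\mathcal O}} _{A,{\operatorname{crit}}}$ are typically of infinite length, so one cannot read off the decomposition from composition factors alone. The strategy, carried out in \cite{AFLin}, is to exploit that restrictedness forces the big center ${\mathfrak z}({\mathfrak g})$ of $V^{{\operatorname{crit}}}({\mathfrak g})$ to act on $M$ through a commutative quotient whose maximal ideals are parametrized precisely by the equivalence classes $\Lambda$. One then lifts the decomposition of this spectrum into connected components to a family of orthogonal central idempotents $(e_\Lambda)$ summing to the identity on $M$; the summands $e_\Lambda M$ lie in ${\overline{\mathcal O}} _{A,\Lambda}$, and identifying them with the abstractly defined $M_\Lambda$ completes the argument. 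The genuinely nontrivial input is therefore the analysis of the restricted center via the (deformed) Feigin--Frenkel isomorphism; once it provides the idempotents, the formal yoga of the first three steps assembles them into the block decomposition.
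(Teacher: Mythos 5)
The paper offers no proof of this statement; it simply cites \cite{AFLin}, so there is no in-text argument to compare against. Judged on its own, your outline has the right shape: the orthogonality argument via a maximal weight vector in a finitely generated ${\widehat{\mathfrak b}}$-$A$-sub-bimodule is correct, and so is the construction of $M_\Lambda$ as the sum of all sub-bimodules lying in ${\overline{\mathcal O}}_{A,\Lambda}$. (Your phrasing ``applying orthogonality to the canonical projections'' for the directness of the sum is a little loose; the cleaner point is that $M_{\Lambda}\cap\sum_{\Lambda'\ne\Lambda}M_{\Lambda'}$ is an object all of whose composition factors lie simultaneously in $\Lambda$ and in its complement, hence is zero.)

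Where I would push back is on your diagnosis of the final step. You assert that proving $M=\sum_\Lambda M_\Lambda$ genuinely requires the deformed Feigin--Frenkel isomorphism and a lift of idempotents from the restricted center. That conflates two results. The equivalence relation $\sim_A$ is \emph{defined} by Verma multiplicities, and the block decomposition with respect to $\sim_A$ follows formally from local ${\widehat{\mathfrak b}}$-finiteness together with the highest-weight/$\operatorname{Ext}^1$ argument: any $m\in M$ lies in a ${\widehat{\mathfrak g}}$-$A$-sub-bimodule $N$ generated by a finite-dimensional ${\widehat{\mathfrak b}}$-stable subspace, and by successively peeling off the submodules generated by the maximal-weight vectors one exhibits $N$ with a finite filtration whose subquotients are quotients of (sums of) restricted Verma modules, hence each sits in a single block; the filtration then splits because a non-split extension of $L_A(\lambda)$ by $L_A(\mu)$ forces $[{\overline{\Delta}}_A(\lambda):L_A(\mu)]\ne 0$ or $[{\overline{\Delta}}_A(\mu):L_A(\lambda)]\ne 0$ by the same highest-weight lift, i.e.\ $\lambda\sim_A\mu$. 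No central idempotents are needed for this. What \emph{does} require the restricted center, Jantzen filtrations and the Feigin--Frenkel isomorphism is the companion result --- the restricted linkage principle $\Lambda=\widehat{{\mathcal W}}_\Lambda.\lambda$ stated in the next subsection --- which identifies the classes $\Lambda$. Moreover, the idempotent approach you sketch silently presupposes that the connected components of the spectrum of the degree-zero restricted center match the $\sim_A$-classes, which is essentially that linkage principle; so as written your proof of the easier theorem uses the harder one as input. It is not wrong, but it is not the minimal (or, I believe, the cited) route.
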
 
\subsection{The restricted linkage principle}
The equivalence classes with respect to $\sim_A$ can be described quite explicitely: For  $\Lambda\in{\widehat{\mathfrak h}^\star}_{\operatorname{crit}}/\sim_A$ and $\lambda\in\Lambda$ we define
$$
\widehat R_\Lambda:=\{\beta\in\widehat R^{real}\mid\langle\lambda+\tau_A+\rho,\beta\rangle\in{\mathbb Z}\}.
$$
This is independent of the choice of $\lambda$ in $\Lambda$ and is called the set of {\em integral (real) roots} with respect to $\Lambda$.  The {\em integral Weyl group} associated with $\Lambda$ is the subgroup $\widehat{{\mathcal W}}_\Lambda$ of $\widehat{{\mathcal W}}$ that is generated by all reflections $s_{\beta}$ with $\beta\in\widehat R_\Lambda$. In \cite{AFLin} it is shown that 
$$
\Lambda=\widehat{{\mathcal W}}_\Lambda.\lambda,
$$
where on the right hand side $\widehat{{\mathcal W}}_\Lambda$ acts on ${\widehat{\mathfrak h}^\star}$ via the {\em dot-action}, i.e.\  via $w.\lambda=w(\lambda+\rho)-\rho$. Note that, as before,  this definition does not depend on the choice of $\rho$. 

\subsection{Objects admitting a restricted Verma flag}
Our localization result holds for the class of the following objects. Let us fix a $\sim_A$-equivalence class $\Lambda$ in ${\widehat{\mathfrak h}^\star}_{{\operatorname{crit}}}$.
\begin{Fiebigdefinition} We say that $M\in{\overline{\mathcal O}} _{A,\Lambda}$ {\em admits a restricted Verma flag} if there is a finite filtration $0=M_0\subset M_1\subset\dots\subset M_n=M$ such that $M_i/M_{i-1}$ is isomorphic to a restricted deformed Verma module for all $i=1,\dots, n$. 
\end{Fiebigdefinition}
We denote by ${\overline{\mathcal O}} _{A,\Lambda}^{V}$ the full subcategory of ${\overline{\mathcal O}} _{A,\Lambda}$ that contains all objects that admit a restricted Verma flag. 
Suppose that $M$ admits a restricted Verma flag and let $0=M_0\subset M_1\subset \dots\subset M_n=M$ be a filtration as in the definition above. For each $\mu\in\Lambda$ we define
$$
(M,{\overline{\Delta}} _A(\mu)):=\#\{i\in\{1,\dots,n\}\mid M_i/M_{i-1}\cong {\overline{\Delta}} _A(\mu)\}.
$$
This number is independent of the filtration and is called the {\em Verma multiplicity} of $M$ at $\mu$. The {\em ${\overline{\Delta}}$-support} of $M$ we define  by  
$$
\operatorname{supp}_{{\overline{\Delta}} } M:=\{\mu\in\Lambda\mid (M:{\overline{\Delta}} _A(\mu))\ne 0\}.
$$

\subsection{Subobjects in restricted Verma modules}

 Let $\lambda,\mu\in\Lambda$ with $\lambda\le\mu$. 
\begin{Fiebigdefinition} We say that $\lambda$ is {\em linked} to $\mu$ if there is a sequence $\beta_1,\dots, \beta_n\in\widehat R^+\cap\widehat R_\Lambda$ such that 
$$
\lambda<s_{\beta_1}.\lambda<s_{\beta_2}s_{\beta_1}.\lambda<\cdots<s_{\beta_n}\cdots s_{\beta_1}.\lambda=\mu.
$$
\end{Fiebigdefinition}
Note that if $A=\mathbb K$ is a field, then for objects  $M$ in $\mathcal{O}_{\mathbb K}$ with finite dimensional weight spaces one can define the Jordan--H\"older multiplicity $[M:L_{\mathbb K}(\mu)]\in{\mathbb N}$ even though there might not be a Jordan--H\"older series of finite length. 
For linked weights we have the following result:

\begin{Fiebiglemma} \label{lemma-antidom} Suppose that $A={\mathbb K}$ is a field. Suppose that  $\lambda,\chi\in\Lambda$ satsify the following:
\begin{enumerate}
\item[a)] $\lambda$ is linked to $\chi$ and  $[{\overline{\Delta}} _{\mathbb K}(\chi),L_{\mathbb K}(\lambda)]=1$,
\item[b)] $\lambda+\delta\not\le\chi$.
\end{enumerate}
  Then the following holds:
\begin{enumerate}
\item There is a unique submodule $M$ of ${\overline{\Delta}} _{\mathbb K}(\chi)$ of highest weight $\lambda$. 
\item Let $N\subset {\overline{\Delta}} _{\mathbb K}(\chi)$ be a submodule of highest weight $\nu>\lambda$ and suppose that $\nu$ is linked to $\lambda$.   Then $N$ contains $M$. 
\end{enumerate}
\end{Fiebiglemma}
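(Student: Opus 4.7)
The plan is to reduce both parts to statements about the unrestricted Verma module $\Delta_{\mathbb K}(\chi)$, where the classical BGG theorem for affine Kac--Moody algebras applies, and then to descend back to ${\overline{\Delta}}_{\mathbb K}(\chi)$ using condition (b). The crucial observation is that the kernel of $\pi\colon\Delta_{\mathbb K}(\chi)\twoheadrightarrow{\overline{\Delta}}_{\mathbb K}(\chi)$ has all its weights bounded above by $\chi-\delta$, so $\pi$ is an isomorphism on every weight space of weight $\mu$ with $\mu\not\le\chi-\delta$. Condition (b) places $\lambda$ in this ``top region'', and the same property transfers to any $\nu$ with $\lambda\le\nu\le\chi$, since $\nu+\delta\le\chi$ would force $\lambda+\delta\le\chi$, contradicting (b). In particular $[\Delta_{\mathbb K}(\chi):L_{\mathbb K}(\lambda)]=[{\overline{\Delta}}_{\mathbb K}(\chi):L_{\mathbb K}(\lambda)]=1$, because $L_{\mathbb K}(\lambda)$ cannot appear among the composition factors of $\ker\pi$.

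For (1), the classical BGG inclusion $\Delta_{\mathbb K}(\lambda)\hookrightarrow\Delta_{\mathbb K}(\chi)$ (which exists since $\lambda$ is linked to $\chi$) composed with $\pi$ yields a nonzero highest weight vector $v_\lambda\in{\overline{\Delta}}_{\mathbb K}(\chi)_\lambda$, and $M:=U({\widehat{\mathfrak g}}).v_\lambda$ is the required submodule. For uniqueness I show that the space of highest weight vectors of weight $\lambda$ in ${\overline{\Delta}}_{\mathbb K}(\chi)$ is one-dimensional. Two linearly independent such vectors $v,v'$ would satisfy $v'\notin U({\widehat{\mathfrak g}}).v$ (the $\lambda$-weight space of any highest weight module of highest weight $\lambda$ being spanned by its generator), so the image of $v'$ in the quotient $\bigl(U({\widehat{\mathfrak g}}).v+U({\widehat{\mathfrak g}}).v'\bigr)/U({\widehat{\mathfrak g}}).v$ would still be a nonzero highest weight vector of weight $\lambda$, forcing $[{\overline{\Delta}}_{\mathbb K}(\chi):L_{\mathbb K}(\lambda)]\ge 2$, a contradiction.

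For (2), fix a highest weight vector $v_\nu\in N$ of weight $\nu$ (which exists by assumption on $N$); it suffices to show $v_\lambda\in U({\widehat{\mathfrak g}}).v_\nu\subset N$. Since $\nu\not\le\chi-\delta$, the vector $v_\nu$ lifts uniquely to $\tilde v_\nu\in\Delta_{\mathbb K}(\chi)_\nu$, and the same top-region argument applied to each weight $\nu+\alpha$, $\alpha\in\widehat R^+$, shows that $\tilde v_\nu$ is a highest weight vector of $\Delta_{\mathbb K}(\chi)$. A standard PBW argument---using that $U({\widehat{\mathfrak n}}^-)$ has no zero divisors and $\Delta_{\mathbb K}(\chi)$ is free of rank one over it---shows that the canonical surjection $\Delta_{\mathbb K}(\nu)\twoheadrightarrow\tilde N:=U({\widehat{\mathfrak g}}).\tilde v_\nu$ is an isomorphism. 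Since $\lambda$ is linked to $\nu$, BGG produces a nonzero highest weight vector $u_\lambda$ of weight $\lambda$ inside $\tilde N\cong\Delta_{\mathbb K}(\nu)\subset\Delta_{\mathbb K}(\chi)$. The uniqueness argument of (1), run now in $\Delta_{\mathbb K}(\chi)$ (using $[\Delta_{\mathbb K}(\chi):L_{\mathbb K}(\lambda)]=1$), identifies $u_\lambda$ up to a nonzero scalar with the BGG image of $\Delta_{\mathbb K}(\lambda)\hookrightarrow\Delta_{\mathbb K}(\chi)$, so $\pi(u_\lambda)$ is a nonzero scalar multiple of $v_\lambda$. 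Since $\pi(\tilde N)=U({\widehat{\mathfrak g}}).v_\nu$, this gives $v_\lambda\in U({\widehat{\mathfrak g}}).v_\nu\subset N$, and hence $M\subset N$.

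The main subtlety is the isomorphism $\tilde N\cong\Delta_{\mathbb K}(\nu)$; this is precisely where passing through the unrestricted Verma module is essential, as PBW for ${\widehat{\mathfrak n}}^-$ supplies the required freeness, whereas an analogous argument performed directly with restricted Vermas would demand understanding how the grading-nonzero part of the center ${\mathfrak z}({\mathfrak g})$ behaves under Verma embeddings at the critical level.
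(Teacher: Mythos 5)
Your proof is correct, and it follows the same overall strategy as the paper: invoke the Kac--Moody analogue of the BGG embedding theorem (Neidhardt's theorem, which the paper cites explicitly) for the unrestricted Verma modules, use hypothesis (b) to guarantee that passing to the restricted quotient does not kill the relevant singular vectors, and use multiplicity one to get uniqueness. Where you differ is in the bookkeeping for part (2): the paper stays at the restricted level, composing ${\overline{\Delta}}_{\mathbb K}(\lambda)\to{\overline{\Delta}}_{\mathbb K}(\nu)\to N\hookrightarrow{\overline{\Delta}}_{\mathbb K}(\chi)$ and asserting (very tersely) that this is the map $\overline f$ from part (1); you instead lift $v_\nu$ to a singular vector $\tilde v_\nu\in\Delta_{\mathbb K}(\chi)$, prove $\tilde N=U({\widehat{\mathfrak g}}).\tilde v_\nu\cong\Delta_{\mathbb K}(\nu)$ via PBW-freeness, apply BGG there, and project back through $\pi$. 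Your lift is exactly what makes the paper's composition argument rigorous: without the identification $\tilde N\cong\Delta_{\mathbb K}(\nu)$, it is not immediately obvious that the restricted composition is nonzero, since the map ${\overline{\Delta}}_{\mathbb K}(\nu)\to N$ has an a priori unknown kernel. The one place where you should add a line is the claim that every weight of $\ker\pi$ is bounded above by $\chi-\delta$; this is true because any $z\in{\mathfrak z}({\mathfrak g})_n$ with $n\neq 0$ commutes with ${\widehat{\mathfrak n}}^+$, so $z.v_\chi$ is again a singular vector, of weight $\chi+n\delta$ (which forces $n<0$ for non-vanishing), and $\ker\pi$ is the sum of the corresponding highest-weight submodules -- but this is exactly the kind of routine verification a referee would accept as ``well known''.
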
 
\begin{proof} We first prove statement (1). The multiplicity assumption implies that there is at most one submodule of highest weight $\lambda$ in ${\overline{\Delta}} _{\mathbb K}(\chi)$.  By a Theorem of Neidhardt  there exists a non-zero homomorphism $f\colon{{\Delta}} _{\mathbb K}(\lambda)\to{{\Delta}} _{\mathbb K}(\chi)$ between the non-restricted Verma modules (see \cite{N} and \cite[Section 2.11]{MP}). As $\lambda+\delta$ is not smaller or equal to $\chi$, it  is not a weight of $\Delta_{\mathbb K}(\chi)$, and hence the restricted homomorphism $\overline f\colon{\overline{\Delta}} _{\mathbb K}(\lambda)\to{\overline{\Delta}} _{\mathbb K}(\chi)$ is non-zero as well, and so its image is of highest weight $\lambda$. 

Now we prove statement (2). Note that we can assume that $N$ is a highest weight module with highest weight $\nu$. Hence there is a non-zero  homomorphism ${\overline{\Delta}} _{\mathbb K}(\nu)\to N$, and the composition ${\overline{\Delta}} _{\mathbb K}(\lambda)\to{\overline{\Delta}} _{\mathbb K}(\nu)\to N\to M$ is the homomorphism we considered above.
\end{proof}

\subsection{A topology on $\Lambda$}

We need the following notions.

\begin{Fiebigdefinition} \begin{itemize}
\item A subset ${\mathcal J}$ of $\Lambda$ is called {\em open}, if for all $\lambda,\mu\in\Lambda$ with $\mu\in{\mathcal J}$ and $\lambda\le \mu$ we have $\lambda\in{\mathcal J}$. 
\item A subset ${\mathcal I}$ of $\Lambda$ is called {\em  closed} if $\Lambda\setminus{\mathcal I}$ is open. 
\item A subset ${\mathcal K}$ of $\Lambda$ is called {\em locally closed} if there is an open subset ${\mathcal J}$ and a closed subset ${\mathcal I}$ such that ${\mathcal K}={\mathcal J}\cap{\mathcal I}$.
\item A subset ${\mathcal T}$ of $\Lambda$ is called {\em locally bounded} if for any $\lambda\in{\mathcal T}$ the set $\{\mu\in{\mathcal T}\mid \lambda\le\mu\}$ is finite.
\end{itemize}
\end{Fiebigdefinition}
Note that this indeed defines a topology on the set $\Lambda$. This topology has the property that arbitrary intersections of open subsets are open, and arbitrary unions of closed subsets are closed.  

We will use the shorthand $\{\le\lambda\}$ for the set $\{\mu\in\Lambda\mid\mu\le\lambda\}$ and we define $\{<\lambda\},\{\ge\lambda\},\dots$ in an analogous fashion.

\begin{Fiebigremark} Suppose that ${\mathcal K}\subset\Lambda$ is locally closed. Then ${\mathcal K}_+:=\bigcup_{\lambda\in{\mathcal K}} \{\le \lambda\}$ is the smallest open subset containing ${\mathcal K}$, and ${\mathcal K}_-:=\bigcup_{\mu\in{\mathcal K}_+\setminus{\mathcal K}}\{\le \mu\}$ is the (open) complement of ${\mathcal K}$ in ${\mathcal K}_+$.
\end{Fiebigremark}

 \subsection{Projectives in truncated subcategories}

Let  $M$ be an object in ${\overline{\mathcal O}} _{A,\Lambda}$. For any closed subset ${\mathcal I}$ of $\Lambda$ we define $M_{\mathcal I}\subset M$ as the sub-${\widehat{\mathfrak g}}$-$A$-bimodule generated by the subspace $\bigoplus_{\nu\in{\mathcal I}} M_\nu$ in $M$. If ${\mathcal J}$ is the open complement of ${\mathcal I}$ we set
$M^{\mathcal J}=M/M_{\mathcal I}$.

\begin{Fiebigdefinition} Let ${\mathcal J}$ be an open subset of $\Lambda$. We denote by ${\overline{\mathcal O}} _{A,\Lambda}^{{\mathcal J}}$ the full subcategory of ${\overline{\mathcal O}} _{A,\Lambda}$ that contains all objects $M$ with the property $M=M^{\mathcal J}$ (i.e.\  $M_\nu=0$ for all $\nu\in{\mathcal I}$).
\end{Fiebigdefinition}

These are the {\em  truncated subcategories}. Here,  projective objects exist:
 \begin{Fiebigtheorem} [\cite{AFLin,FProj}] Let ${\mathcal J}$ be a locally bounded open subset of $\Lambda$. For any $\lambda\in{\mathcal J}$ there exists an up to isomorphism unique object ${\overline{P}}^{{\mathcal J}}_A(\lambda)$ with the following properties:
 \begin{enumerate}
 \item ${\overline{P}}_A^{\mathcal J}(\lambda)$ is projective and indecomposable in ${\overline{\mathcal O}} _{A,\Lambda}^{{\mathcal J}}$.
 \item There is a surjective homomorphism ${\overline{P}}_A^{\mathcal J}(\lambda)\to{\overline{\Delta}} _A(\lambda)$.
 \end{enumerate}
 \end{Fiebigtheorem}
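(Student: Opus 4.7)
The plan is to construct ${\overline{P}}^{{\mathcal J}}_A(\lambda)$ as a projective cover of the restricted Verma module ${\overline{\Delta}}_A(\lambda)$, by building up an object with the right $\operatorname{Ext}^1$-vanishing and then extracting an indecomposable summand via Krull--Schmidt.

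First, I would use local boundedness to reduce to a finite problem. Since ${\mathcal J}$ is open (downward closed under $\le$) and locally bounded, the set ${\mathcal J}_{\ge\lambda}=\{\mu\in{\mathcal J}\mid\mu\ge\lambda\}$ is finite. Enumerating its elements in a way compatible with the partial order, I would set $Q_0={\overline{\Delta}}_A(\lambda)$ and iteratively form extensions
$0\to{\overline{\Delta}}_A(\mu)^{\oplus n_\mu}\to Q_i\to Q_{i-1}\to 0$
to kill $\operatorname{Ext}^1_{{\overline{\mathcal O}}_{A,\Lambda}^{{\mathcal J}}}(Q_{i-1},{\overline{\Delta}}_A(\mu))$ for each $\mu\in{\mathcal J}_{\ge\lambda}$. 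The essential input is that these $\operatorname{Ext}^1$-groups are finitely generated $A$-modules, which is the type of Ext-control established in \cite{AFLin,FProj}. The resulting $Q$ lies in ${\overline{\mathcal O}}_{A,\Lambda}^{V}\cap{\overline{\mathcal O}}_{A,\Lambda}^{{\mathcal J}}$ and, by construction, satisfies $\operatorname{Ext}^1(Q,{\overline{\Delta}}_A(\mu))=0$ for every $\mu\in{\mathcal J}$. A d\'evissage along restricted Verma flags then upgrades this to $\operatorname{Ext}^1(Q,N)=0$ for every $N$ with a restricted Verma flag in the truncated subcategory, and a general argument (treating an arbitrary object of ${\overline{\mathcal O}}_{A,\Lambda}^{{\mathcal J}}$ as a suitable limit of such $N$) yields full projectivity of $Q$ in ${\overline{\mathcal O}}_{A,\Lambda}^{{\mathcal J}}$.

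Second, I would apply Krull--Schmidt. Since $Q$ has a finite restricted Verma flag and $A$ is Noetherian, $\operatorname{End}(Q)$ is a finitely generated module over the local Noetherian ring $A$ and is therefore semiperfect, so $Q$ decomposes uniquely (up to order and isomorphism) into indecomposable summands. The unique summand admitting a surjection onto ${\overline{\Delta}}_A(\lambda)$---equivalently, the one whose base change to the residue field ${\mathbb K}$ has head $L_{\mathbb K}(\lambda)$---is the required ${\overline{P}}^{{\mathcal J}}_A(\lambda)$. Uniqueness up to isomorphism follows from the standard projective cover argument: any other projective in ${\overline{\mathcal O}}_{A,\Lambda}^{{\mathcal J}}$ surjecting onto ${\overline{\Delta}}_A(\lambda)$ must contain this indecomposable as a direct summand, and indecomposability pins it down.

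The main obstacle is ensuring that the iterative extension procedure actually terminates at a projective object, rather than at an object with only partial $\operatorname{Ext}^1$-vanishing. In ordinary (non-critical) category $\mathcal{O}$, one would invoke translation or tensoring with finite-dimensional ${\mathfrak g}$-modules to produce projectives in one stroke, but these operations do not preserve the restricted quotient at the critical level, so the usual toolkit is unavailable. One is therefore forced to rely on explicit control of $\operatorname{Ext}^1$-groups between restricted Vermas, and this explicit control---supplied by \cite{AFLin,FProj}---is precisely what makes the theorem nontrivial in this setting.
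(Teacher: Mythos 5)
The paper does not prove this statement: it is quoted verbatim from \cite{AFLin,FProj}, so there is no ``paper's own proof'' to compare against. Evaluated against what those references actually do, your proposal takes a genuinely different route and also contains a gap that undercuts it as a self-contained proof.

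The route in the references is, essentially: first construct projective covers in the \emph{non-restricted} truncated deformed category ${\mathcal{O}}^{\mathcal J}_{A,\Lambda}$ by the standard device of inducing a suitable finite-rank ${\widehat{\mathfrak b}}$-$A$-module and truncating; then observe that the restriction functor $M\mapsto\overline{M}$ is left adjoint to the exact, fully faithful inclusion ${\overline{\mathcal O}}^{\mathcal J}_{A,\Lambda}\hookrightarrow{\mathcal{O}}^{\mathcal J}_{A,\Lambda}$, hence sends projectives to projectives; and finally use the freeness of (deformed) Verma modules over the Feigin--Frenkel center to show that the restricted projective so obtained still carries a finite restricted Verma flag, from which indecomposability, uniqueness and BGG reciprocity follow. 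That last input from the center is where the real work and novelty lie at the critical level. Your iterative $\operatorname{Ext}^1$-killing is a standard alternative in highest-weight settings and would buy a more elementary, self-contained argument \emph{if} you could establish the needed $\operatorname{Ext}^1$-control in ${\overline{\mathcal O}}^{\mathcal J}_{A,\Lambda}$ independently.

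That is exactly where the gap is. You write that the finite generation of $\operatorname{Ext}^1_{{\overline{\mathcal O}}^{\mathcal J}_{A,\Lambda}}(Q_{i-1},{\overline{\Delta}}_A(\mu))$ is ``the type of Ext-control established in \cite{AFLin,FProj},'' but those are precisely the references being cited for the theorem you are trying to prove, and their $\operatorname{Ext}$-control is a \emph{consequence} of first having the projectives (via the restriction route), not an a priori input. Note also that ${\overline{\mathcal O}}_{A,{\operatorname{crit}}}$ is closed under subobjects and quotients but \emph{not} under extensions in ${\mathcal{O}}_{A,{\operatorname{crit}}}$, so $\operatorname{Ext}^1$ computed in the restricted category does not reduce to $\operatorname{Ext}^1$ in the ambient one; without input from the center there is no reason these groups are finitely generated over $A$, nor that the iterative extension process stabilizes. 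Two smaller points: the passage from $\operatorname{Ext}^1(Q,{\overline{\Delta}}_A(\mu))=0$ for all $\mu\in{\mathcal J}$ to full projectivity in ${\overline{\mathcal O}}^{\mathcal J}_{A,\Lambda}$ needs an argument (objects here need not have Verma or dual Verma resolutions and need not have finite length over $A$); and ``finitely generated over a local Noetherian $A$, hence semiperfect'' is not automatic unless $A$ is complete or one supplies a separate Krull--Schmidt argument, which the deformation algebras ${\widetilde S}$ and ${\widetilde S}_{\mathfrak p}$ are not.
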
 
 Here is a list of properties of the projectives:
 \begin{Fiebigtheorem} [\cite{AFLin,FProj}] \label{thm-propproj} 
 \begin{enumerate}
 \item Let ${\mathcal J}^\prime\subset{\mathcal J}$ be open subsets of $\Lambda$. Then 
 $$
 ({\overline{P}}^{{\mathcal J}}_A(\lambda))^{{\mathcal J}^\prime}\cong {\overline{P}}_A^{{\mathcal J}^\prime}(\lambda).
 $$
 \item For a homomorphism $A\to A^\prime$ of deformation algebras, the object ${\overline{P}}^{\mathcal J}_A(\lambda)\otimes_A A^\prime$ is projective in ${\overline{\mathcal O}} _{A^\prime,\Lambda}^{\mathcal J}$ and it maps surjectively onto ${\overline{\Delta}} _{A^\prime}(\lambda)$.
 \item For a homomorphism $A\to A^\prime$ of deformation algebras and a projective object $P$ in ${\overline{\mathcal O}} _{A,\Lambda}^{{\mathcal J}}$, we have a functorial isomorphism
 $$
 \operatorname{Hom}_{{\overline{\mathcal O}} _{A,\Lambda}}(P,M)\otimes_AA^\prime\stackrel{\sim}\to\operatorname{Hom}_{{\overline{\mathcal O}} _{A^\prime,\Lambda}}(P\otimes_AA^\prime, M\otimes_AA^\prime).
 $$
 \item The object ${\overline{P}}^{{\mathcal J}}_A(\lambda)$ admits a restricted deformed Verma flag and for the multiplicities we have
 $$
 ({\overline{P}}^{{\mathcal J}}_A(\lambda),{\overline{\Delta}} _A(\mu))= \begin{cases}
 0,&\text{ if $\mu\not\in{\mathcal J}$,}\\
 [{\overline{\Delta}} _{\mathbb K} (\mu):L_{\mathbb K} (\lambda)],&\text{ if $\mu\in{\mathcal J}$.}
 \end{cases}
  $$
 \end{enumerate} 
 \end{Fiebigtheorem}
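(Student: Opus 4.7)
My plan is to establish the four items together, leveraging two fundamental adjunctions: for nested open sets ${\mathcal J}'\subset {\mathcal J}$, the truncation functor $M\mapsto M^{{\mathcal J}'}$ is left adjoint to the inclusion ${\overline{\mathcal O}}_{A,\Lambda}^{{\mathcal J}'}\hookrightarrow{\overline{\mathcal O}}_{A,\Lambda}^{{\mathcal J}}$, and for an $S$-algebra map $A\to A'$, the base change $-\otimes_A A'$ is left adjoint to restriction of scalars. Both inclusion and restriction are exact, so their left adjoints send projectives to projectives, which gives the projectivity assertions in (1) and (2). For (2) I would additionally verify that restriction maps ${\overline{\mathcal O}}_{A',\Lambda}^{{\mathcal J}}$ into ${\overline{\mathcal O}}_{A,\Lambda}^{{\mathcal J}}$; this is immediate from $\tau_{A'}=\tau_A\otimes 1_{A'}$, which makes the weight decompositions compatible.

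Given projectivity, the surjections in (1) and (2) are produced by applying the respective left adjoint to ${\overline{P}}^{\mathcal J}_A(\lambda)\twoheadrightarrow {\overline{\Delta}}_A(\lambda)$, using ${\overline{\Delta}}_A(\lambda)^{{\mathcal J}'}={\overline{\Delta}}_A(\lambda)$ for $\lambda\in {\mathcal J}'$ and the identification ${\overline{\Delta}}_A(\lambda)\otimes_A A'\cong {\overline{\Delta}}_{A'}(\lambda)$. To finish (1) I still need indecomposability of $({\overline{P}}^{\mathcal J}_A(\lambda))^{{\mathcal J}'}$; for this I would observe that $\operatorname{End}({\overline{P}}^{\mathcal J}_A(\lambda))$ is local (as $A$ is local and ${\overline{P}}^{\mathcal J}_A(\lambda)$ is an indecomposable projective) and that the canonical ring homomorphism $\operatorname{End}({\overline{P}}^{\mathcal J}_A(\lambda))\to \operatorname{End}(({\overline{P}}^{\mathcal J}_A(\lambda))^{{\mathcal J}'})$, $\phi\mapsto\phi^{{\mathcal J}'}$, is surjective by projectivity, hence the target is also local. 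Uniqueness of projective covers then identifies the truncation with ${\overline{P}}_A^{{\mathcal J}'}(\lambda)$. For (3) the adjunction already yields
$$
\operatorname{Hom}_{{\overline{\mathcal O}}_{A',\Lambda}}(P\otimes_A A', M\otimes_A A')\cong \operatorname{Hom}_{{\overline{\mathcal O}}_{A,\Lambda}}(P, M\otimes_A A'),
$$
so what remains is the standard isomorphism $\operatorname{Hom}(P,M)\otimes_A A'\cong\operatorname{Hom}(P, M\otimes_A A')$. This holds once $P={\overline{P}}^{\mathcal J}_A(\lambda)$ is finitely presented; I would establish finite presentation inductively, using local boundedness of ${\mathcal J}$ so that only finitely many Verma subquotients contribute.

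The heart of the theorem is (4), and it is where the cited works carry the weight. The existence of a restricted deformed Verma flag is the technical centrepiece of the construction in \cite{AFLin,FProj}: one enlarges a locally bounded open ${\mathcal J}'\subset {\mathcal J}$ by one minimal weight $\mu$ at a time and realizes ${\overline{P}}^{{\mathcal J}'\cup\{\mu\}}_A(\lambda)$ as an iterated extension of the previous projective by copies of ${\overline{\Delta}}_A(\mu)$, with local boundedness keeping the induction finite at each step. Vanishing of the Verma multiplicity for $\mu\notin {\mathcal J}$ is automatic, as ${\overline{P}}^{\mathcal J}_A(\lambda)$ has no weights outside ${\mathcal J}$. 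For the reciprocity formula I would first reduce to the residue field $A={\mathbb K}$ via (2)--(3), and then pair ${\overline{P}}^{\mathcal J}_{\mathbb K}(\lambda)$ against a dual restricted Verma module ${\overline{\nabla}}_{\mathbb K}(\mu)$: the left-hand side of the pairing counts Verma-flag multiplicities, while by projectivity the right-hand side equals $[{\overline{\Delta}}_{\mathbb K}(\mu):L_{\mathbb K}(\lambda)]$. The principal obstacle here is setting up the dual Verma formalism and the Verma-flag inductive construction in the \emph{restricted} category, which is the genuine contribution of the cited papers and the reason they are invoked rather than redone.
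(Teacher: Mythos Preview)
The paper does not supply its own proof of this theorem; it is stated with citation to \cite{AFLin,FProj} and used as a black box thereafter. So there is no in-paper argument to compare your proposal against. Your outline is a reasonable reconstruction of how such results are proved in the deformed category~$\mathcal{O}$ literature, and you correctly flag that the substantive content---the inductive construction of the restricted projectives with Verma flag and the restricted BGG reciprocity---resides in the cited sources.

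One caveat on your sketch for (2)--(3): the adjunction argument via restriction of scalars is slightly delicate here. Restriction along $A\to A'$ need not carry ${\overline{\mathcal O}}_{A',\Lambda}^{\mathcal J}$ into ${\overline{\mathcal O}}_{A,\Lambda}^{\mathcal J}$ when $A'$ is not finitely generated over $A$ (local finiteness of the ${\widehat{\mathfrak b}}$-action can fail), which is exactly the localization situation the paper cares about. The tensor--Hom adjunction still holds at the level of ${\widehat{\mathfrak g}}$-$A$-bimodules, so your reduction of (3) to the isomorphism $\operatorname{Hom}(P,M)\otimes_A A'\cong\operatorname{Hom}(P,M\otimes_A A')$ is valid; but the cleanest way to establish that last isomorphism is not via an abstract finite-presentation argument but via the Verma flag on $P$ (which you are anyway invoking from part (4)): it reduces the claim to $\operatorname{Hom}({\overline{\Delta}}_A(\mu),M)\otimes_A A'\cong\operatorname{Hom}({\overline{\Delta}}_{A'}(\mu),M\otimes_A A')$, and both sides identify with a single weight space. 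This is the route taken in \cite{FieMZ,FProj}.
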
 
 Part (4) in the theorem above is called the {\em BGG-reciprocity}. Again we simplify notation and write ${\overline{P}}^{{\mathcal J}}_{\mathfrak p}(\lambda)$ instead of ${\overline{P}}^{{\mathcal J}}_{\widetilde S_{\mathfrak p}}(\lambda)$.

\subsection{The structure of generic and subgeneric projectives}
Suppose that $\alpha\in\widehat R_\Lambda$. Then for any $\lambda\in\Lambda$ we define $\alpha\uparrow\lambda$ to be the minimal element in $\{s_{\alpha+n\delta}.\lambda\mid n\in{\mathbb Z}, s_{\alpha+n\delta}.\lambda\ge\lambda\}$. 

\begin{Fiebigproposition}[\cite{Fsub}]\label{prop-strucproj} Let ${\mathfrak p}$ be a prime ideal of ${\widetilde S}$, let ${\mathcal J}$ be an open and locally bounded subset of $\Lambda$ and let $\lambda$ be an element in ${\mathcal J}$.
\begin{enumerate}
\item If $\alpha^\vee\not\in{\mathfrak p}$ for all $\alpha\in R$, then ${\overline{P}}_{\mathfrak p}^{\mathcal J}(\lambda)\cong{\overline{\Delta}} _{\mathfrak p}(\lambda)$.
\item Suppose that  ${\mathfrak p}=\alpha^\vee {\widetilde S}$ for some $\alpha\in R$, but $\langle\lambda,\alpha^\vee\rangle\not\in{\mathbb Z}$ or $\alpha\uparrow\lambda\not\in{\mathcal J}$. Then ${\overline{P}}_{\mathfrak p}^{\mathcal J}(\lambda)\cong{\overline{\Delta}} _{\mathfrak p}(\lambda)$.
\item   Suppose that  ${\mathfrak p}=\alpha^\vee {\widetilde S}$ for some $\alpha\in R$ and $\langle\lambda,\alpha^\vee\rangle\in{\mathbb Z}$ and $\alpha\uparrow\lambda\in{\mathcal J}$. Then  there is a short exact sequence
$$
0\to{\overline{\Delta}} _{\mathfrak p}(\alpha\uparrow\lambda)\to{\overline{P}}_{\mathfrak p}^{{\mathcal J}}(\lambda)\to{\overline{\Delta}} _{\mathfrak p}(\lambda)\to 0.
$$
\end{enumerate}
\end{Fiebigproposition}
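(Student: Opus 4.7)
The plan is to read everything off from BGG reciprocity (Theorem~\ref{thm-propproj}(4)). Over the residue field ${\mathbb K}={\widetilde S}_{\mathfrak p}/{\mathfrak p}{\widetilde S}_{\mathfrak p}$ one has
\[
({\overline{P}}^{{\mathcal J}}_{\mathfrak p}(\lambda),{\overline{\Delta}}_{\mathfrak p}(\mu))=[{\overline{\Delta}}_{{\mathbb K}}(\mu):L_{{\mathbb K}}(\lambda)]\quad\text{for $\mu\in{\mathcal J}$,}
\]
so the strategy is to determine the restricted linkage class $\Lambda$ over ${\mathbb K}$, compute the relevant composition multiplicities, and then reconstruct the Verma flag of ${\overline{P}}^{{\mathcal J}}_{\mathfrak p}(\lambda)$ using the canonical surjection onto ${\overline{\Delta}}_{\mathfrak p}(\lambda)$.

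A key preliminary observation is that because ${\widetilde S}$ is localized at the maximal ideal $S{\mathfrak h}$, any $\alpha^\vee-c$ with $c\in{\mathbb C}\setminus\{0\}$ is a unit in ${\widetilde S}$, hence also in ${\widetilde S}_{\mathfrak p}$. Consequently, the image $\overline{\alpha^\vee}$ of $\alpha^\vee$ in ${\mathbb K}$ is either $0$ (precisely when $\alpha^\vee\in{\mathfrak p}$) or lies outside ${\mathbb C}$. Combined with the vanishing of the $K$-contribution at the critical level, the integrality condition defining $\widehat R_\Lambda$ reduces to $\langle\lambda+\rho,\alpha^\vee\rangle+\overline{\alpha^\vee}\in{\mathbb Z}$, which is satisfied in ${\mathbb K}$ only when $\alpha^\vee\in{\mathfrak p}$ and $\langle\lambda,\alpha^\vee\rangle\in{\mathbb Z}$. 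Hence in case (1) one obtains $\widehat R_\Lambda=\emptyset$, so $\widehat{\mathcal W}_\Lambda$ is trivial, $\Lambda=\{\lambda\}$, and ${\overline{\Delta}}_{\mathbb K}(\lambda)=L_{\mathbb K}(\lambda)$ is simple; the same conclusion holds in case (2) under the hypothesis $\langle\lambda,\alpha^\vee\rangle\notin{\mathbb Z}$, using that $\overline{\beta^\vee}\notin{\mathbb C}$ for every $\beta\in R$ not proportional to $\alpha$. BGG reciprocity then forces the surjection ${\overline{P}}^{{\mathcal J}}_{\mathfrak p}(\lambda)\to{\overline{\Delta}}_{\mathfrak p}(\lambda)$ to be an isomorphism.

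In the remaining subgeneric situation $\langle\lambda,\alpha^\vee\rangle\in{\mathbb Z}$, the integral Weyl group $\widehat{\mathcal W}_\Lambda$ is generated by the reflections $s_{\alpha+n\delta}$, $n\in{\mathbb Z}$. The central auxiliary claim is that $[{\overline{\Delta}}_{\mathbb K}(\mu):L_{\mathbb K}(\lambda)]$ equals $1$ when $\mu\in\{\lambda,\alpha\uparrow\lambda\}$ and vanishes otherwise. Granting this, in case (2) with $\alpha\uparrow\lambda\notin{\mathcal J}$ only $\mu=\lambda$ contributes, so again ${\overline{P}}^{{\mathcal J}}_{\mathfrak p}(\lambda)\cong{\overline{\Delta}}_{\mathfrak p}(\lambda)$. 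In case (3), both $\mu=\lambda$ and $\mu=\alpha\uparrow\lambda$ lie in ${\mathcal J}$ and contribute with multiplicity one, so the Verma flag of ${\overline{P}}^{{\mathcal J}}_{\mathfrak p}(\lambda)$ has length two; since $\alpha\uparrow\lambda>\lambda$, the top layer must be ${\overline{\Delta}}_{\mathfrak p}(\lambda)$ (compatible with the given surjection) and the bottom layer ${\overline{\Delta}}_{\mathfrak p}(\alpha\uparrow\lambda)$, yielding the desired short exact sequence.

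The main obstacle is the auxiliary multiplicity claim. I would extract the lower bound $[{\overline{\Delta}}_{\mathbb K}(\alpha\uparrow\lambda):L_{\mathbb K}(\lambda)]\ge 1$ from Lemma~\ref{lemma-antidom} applied with $\chi=\alpha\uparrow\lambda$, after verifying $\lambda+\delta\not\le\alpha\uparrow\lambda$ by direct computation from $\alpha\uparrow\lambda-\lambda=-\langle\lambda+\rho,\alpha^\vee\rangle(\alpha+n_0\delta)$, an identity valid at the critical level because the $K$-contribution to the pairing vanishes. The matching upper bound, together with the vanishing for all other $\mu>\lambda$, is more delicate and would be obtained from a Jantzen-type sum formula for restricted Verma modules; in this rank-one subgeneric setting the restrictedness eliminates the translation summands in the $\delta$-direction, so the calculation reduces to the classical subgeneric structure for an affine $\mathfrak{sl}_2$.
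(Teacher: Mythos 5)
The paper does not prove this proposition; it cites it from \cite{Fsub}, so there is no in-text proof to compare against. Your overall reduction is the right one: BGG reciprocity (Theorem~\ref{thm-propproj}(4)) converts the problem into computing $[{\overline{\Delta}}_{\mathbb K}(\mu):L_{\mathbb K}(\lambda)]$ over the residue field ${\mathbb K}={\widetilde S}_{\mathfrak p}/{\mathfrak p}{\widetilde S}_{\mathfrak p}$, and your analysis of $\widehat R_\Lambda$ over ${\mathbb K}$ — that $\alpha^\vee-c$ is a unit in ${\widetilde S}$ for $c\ne 0$, that the $K$-contribution vanishes at the critical level, hence $\beta=\pm\alpha+n\delta$ is ${\mathbb K}$-integral iff $\alpha^\vee\in{\mathfrak p}$ and $\langle\lambda,\alpha^\vee\rangle\in{\mathbb Z}$ — is correct. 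Cases (1) and the first half of (2) therefore follow as you state, and once the auxiliary multiplicity claim is granted, the assembly of the two-step flag in case (3) from the defining surjection onto ${\overline{\Delta}}_{\mathfrak p}(\lambda)$ is routine.

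The gap is the auxiliary claim itself, and it is not a small one: the statement that in the subgeneric block $[{\overline{\Delta}}_{\mathbb K}(\mu):L_{\mathbb K}(\lambda)]=1$ for $\mu\in\{\lambda,\alpha\uparrow\lambda\}$ and $=0$ for every other $\mu$ in the $\widehat{\mathcal W}_\Lambda$-orbit above $\lambda$ is exactly the main technical theorem of the cited reference \cite{Fsub}. Note that, unlike the non-restricted case, the orbit $\{\alpha\uparrow\lambda,\ \alpha\uparrow\alpha\uparrow\lambda,\ \dots\}$ is infinite, and the nontrivial content is that $L_{\mathbb K}(\lambda)$ does \emph{not} occur in ${\overline{\Delta}}_{\mathbb K}(\mu)$ for any $\mu$ strictly above $\alpha\uparrow\lambda$ in the orbit; your sentence ``would be obtained from a Jantzen-type sum formula for restricted Verma modules'' merely names a tool without carrying out the argument. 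Moreover, Lemma~\ref{lemma-antidom} cannot be ``applied'' to produce the lower bound $[{\overline{\Delta}}_{\mathbb K}(\alpha\uparrow\lambda):L_{\mathbb K}(\lambda)]\ge 1$, because $[{\overline{\Delta}}_{\mathbb K}(\chi):L_{\mathbb K}(\lambda)]=1$ is one of its \emph{hypotheses}. What you actually want is the existence step inside the proof of that lemma (Neidhardt's embedding theorem together with the observation that the induced restricted map is nonzero when $\lambda+\delta\not\le\chi$), applied directly; and the inequality $\lambda+\delta\not\le\alpha\uparrow\lambda$, while true, deserves a real argument rather than a gesture at ``direct computation,'' since $\alpha\uparrow\lambda-\lambda$ is a positive integer multiple of a positive real root and one needs to rule out subtracting $\delta$ and still landing in the positive cone. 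So: correct reduction, correct generic and integrality analysis, but the subgeneric multiplicity formula — the actual content of the cited result — is assumed rather than proven.
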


If ${\overline{P}}_{\mathfrak p}^{{\mathcal J}}(\lambda)\cong{\overline{\Delta}} _{\mathfrak p}(\lambda)$, then clearly $\operatorname{End}_{{\overline{\mathcal O}} _{\mathfrak p}}({\overline{P}}_{\mathfrak p}^{{\mathcal J}}(\lambda))={{\widetilde S}}_{\mathfrak p}$. We now describe the endomorphism ring in case ${\overline{P}}_{\mathfrak p}^{{\mathcal J}}(\lambda)$ has a two step Verma flag as in part (3) of the proposition above. Let us set $Q={{\widetilde S}}_{(0)}$.
By Theorem \ref{thm-propproj}, ${\overline{P}}_{\mathfrak p}^{{\mathcal J}}(\lambda)\otimes_{{\widetilde S}} Q$ is projective in ${\overline{\mathcal O}} _Q^{{\mathcal J}}$, and by the proposition above, it splits into a direct sum of copies of restricted Verma modules. Then ${\overline{P}}_{\mathfrak p}^{{\mathcal J}}(\lambda)\otimes_{{\widetilde S}} Q\cong{\overline{\Delta}} _Q(\lambda)\oplus{\overline{\Delta}} _Q(\alpha\uparrow\lambda)$ and hence $\operatorname{End}_{{\overline{\mathcal O}} _Q}({\overline{P}}_{\mathfrak p}^{{\mathcal J}}(\lambda)\otimes_{{\widetilde S}} Q)\cong Qe_\lambda\oplus Qe_{\alpha\uparrow\lambda}$, where $e_\lambda$ and $e_{\alpha\uparrow\lambda}$ are the two idempotents associated with the preceding direct sum decomposition. We have a natural inclusion 
$$
\operatorname{End}_{{\overline{\mathcal O}} _{\mathfrak p}}({\overline{P}}_{\mathfrak p}^{{\mathcal J}}(\lambda))\subset \operatorname{End}_{{\overline{\mathcal O}} _Q}({\overline{P}}_{\mathfrak p}^{{\mathcal J}}(\lambda)\otimes_{{\widetilde S}} Q)=Q\oplus Q.
$$ 
Here is an important structural result. Its proves uses the Jantzen filtration and the Jantzen sum formula. 
 \begin{Fiebigproposition}[{cf.\  \cite[Proposition 2]{K}}]\label{prop-subgenend} The image of the above inclusion is $\{(z_\lambda,z_{\alpha\uparrow\lambda})\in {\widetilde S}\oplus {\widetilde S}\mid z_\lambda\equiv z_{\alpha\uparrow\lambda}\mod\alpha^\vee\}.$ 
 \end{Fiebigproposition}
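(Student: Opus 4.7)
My plan is to identify the image of the displayed inclusion in three stages: first, show that every endomorphism produces a pair of scalars in ${\widetilde S}_{\mathfrak p}\oplus{\widetilde S}_{\mathfrak p}$ by exploiting the canonical two-step Verma filtration of $P$; next, show that the two scalars are congruent modulo $\alpha^\vee$ by using indecomposability of the reduction of $P$; and finally, show that the image fills out the congruence subring by exhibiting one extra endomorphism whose construction is the content of the proposition. Set $P:={\overline{P}}_{\mathfrak p}^{{\mathcal J}}(\lambda)$ and $\mu:=\alpha\uparrow\lambda$, so $\mu>\lambda$. In the short exact sequence $0\to{\overline{\Delta}}_{\mathfrak p}(\mu)\to P\to{\overline{\Delta}}_{\mathfrak p}(\lambda)\to 0$ of Proposition \ref{prop-strucproj}(3), the submodule ${\overline{\Delta}}_{\mathfrak p}(\mu)$ is canonical: $\mu$ is the unique maximal weight of $P$, so $P_\mu$ is free of rank one over ${\widetilde S}_{\mathfrak p}$ and generates ${\overline{\Delta}}_{\mathfrak p}(\mu)$ as a sub-${\widehat{\mathfrak g}}$-${\widetilde S}_{\mathfrak p}$-bimodule. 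Any $f\in\operatorname{End}_{{\overline{\mathcal O}}_{\mathfrak p}}(P)$ therefore acts on $P_\mu$ by a scalar $z_\mu\in{\widetilde S}_{\mathfrak p}$, preserves ${\overline{\Delta}}_{\mathfrak p}(\mu)$, and descends to an endomorphism of $P/{\overline{\Delta}}_{\mathfrak p}(\mu)\cong{\overline{\Delta}}_{\mathfrak p}(\lambda)$ acting on its highest weight space by a scalar $z_\lambda\in{\widetilde S}_{\mathfrak p}$. Tracking these through the base change to $Q$, one checks that $(z_\lambda,z_\mu)$ is precisely the image of $f$ under $\operatorname{End}_{{\overline{\mathcal O}}_Q}(P\otimes Q)=Qe_\lambda\oplus Qe_\mu$, so the inclusion already lands inside ${\widetilde S}_{\mathfrak p}\oplus{\widetilde S}_{\mathfrak p}$.

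For the congruence I would reduce modulo the maximal ideal $\alpha^\vee{\widetilde S}_{\mathfrak p}$ and set ${\mathbb K}:={\widetilde S}_{\mathfrak p}/\alpha^\vee{\widetilde S}_{\mathfrak p}$. By Theorem \ref{thm-propproj}(2), $P\otimes_{{\widetilde S}_{\mathfrak p}}{\mathbb K}$ is a projective in ${\overline{\mathcal O}}_{{\mathbb K},\Lambda}^{{\mathcal J}}$ surjecting onto ${\overline{\Delta}}_{\mathbb K}(\lambda)$, and matching Verma multiplicities via BGG-reciprocity (part (4)) on both sides shows $P\otimes{\mathbb K}\cong{\overline{P}}_{\mathbb K}^{{\mathcal J}}(\lambda)$; in particular it is indecomposable and $\operatorname{End}(P\otimes{\mathbb K})$ is local. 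Combined with the base change isomorphism $\operatorname{End}(P)\otimes_{{\widetilde S}_{\mathfrak p}}{\mathbb K}\cong\operatorname{End}(P\otimes{\mathbb K})$ of part (3), the reduction of our inclusion takes values in a local subring of ${\mathbb K}\oplus{\mathbb K}$ containing $(1,1)$. The only such subring is the diagonal (any larger one contains some $(a,b)$ with $a\ne b$, hence after subtracting $a(1,1)$ contains $(0,b-a)$ with $b-a$ a unit, hence is all of ${\mathbb K}\oplus{\mathbb K}$, which is not local). This forces $z_\lambda\equiv z_\mu\pmod{\alpha^\vee}$.

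At this point the image sits inside $R:=\{(a,b)\in{\widetilde S}_{\mathfrak p}\oplus{\widetilde S}_{\mathfrak p}\mid a\equiv b\pmod{\alpha^\vee}\}$, which is free of rank two over the discrete valuation ring ${\widetilde S}_{\mathfrak p}$ with basis $(1,1)$ and $(\alpha^\vee,0)$. The module $\operatorname{End}(P)$ is also free of rank two, being a finitely generated torsion-free ${\widetilde S}_{\mathfrak p}$-module (hence reflexive, hence free, over a DVR) with generic rank $\dim_Q\operatorname{End}(P\otimes Q)=2$. Since $\operatorname{id}\in\operatorname{End}(P)$ already contributes $(1,1)$, the equality $\operatorname{End}(P)=R$ will follow once I produce an $f_0\in\operatorname{End}(P)$ whose image is $(\alpha^\vee,0)$. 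Equivalently, I need a lift $\bar\phi\colon{\overline{\Delta}}_{\mathfrak p}(\lambda)\to P$ of $\alpha^\vee\cdot\operatorname{id}_{{\overline{\Delta}}_{\mathfrak p}(\lambda)}$ along the projection $\pi\colon P\twoheadrightarrow{\overline{\Delta}}_{\mathfrak p}(\lambda)$: setting $f_0:=\bar\phi\circ\pi$, one reads off directly that $f_0$ kills ${\overline{\Delta}}_{\mathfrak p}(\mu)$ and descends to multiplication by $\alpha^\vee$ on ${\overline{\Delta}}_{\mathfrak p}(\lambda)$. The existence of $\bar\phi$ amounts to the vanishing of $\alpha^\vee\cdot[P]$ in $\operatorname{Ext}^1_{{\overline{\mathcal O}}_{\mathfrak p}}({\overline{\Delta}}_{\mathfrak p}(\lambda),{\overline{\Delta}}_{\mathfrak p}(\mu))$, i.e.\ to the statement that the obstruction to splitting the sequence over ${\widetilde S}_{\mathfrak p}$ is precisely a single power of $\alpha^\vee$. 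This is the main obstacle, and it is exactly the step where the Jantzen filtration on ${\overline{\Delta}}_{\mathfrak p}(\lambda)$ and the Jantzen sum formula are required: they pin down the ${\widetilde S}_{\mathfrak p}$-integral structure of $P$ inside its generic splitting $P\otimes Q\cong{\overline{\Delta}}_Q(\lambda)\oplus{\overline{\Delta}}_Q(\mu)$ sharply enough to control the $\alpha^\vee$-torsion of $\operatorname{Ext}^1$. Once that Jantzen-theoretic control is in hand, the construction of $f_0$ and hence the proposition follow.
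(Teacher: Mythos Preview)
The paper does not actually give a proof of this proposition; it cites \cite[Proposition~2]{K} and merely remarks that the argument uses the Jantzen filtration and the Jantzen sum formula. Your outline is fully consistent with that indication: your Steps~1 and~2 (landing in ${\widetilde S}_{\mathfrak p}\oplus{\widetilde S}_{\mathfrak p}$ via the canonical two-step filtration, and deducing the congruence from locality of $\operatorname{End}(P\otimes{\mathbb K})$ via base change and BGG-reciprocity) are the standard reductions, and in Step~3 you correctly isolate the remaining nontrivial point---that the extension class $[P]\in\operatorname{Ext}^1_{{\overline{\mathcal O}}_{\mathfrak p}}({\overline{\Delta}}_{\mathfrak p}(\lambda),{\overline{\Delta}}_{\mathfrak p}(\mu))$ is annihilated by a \emph{single} power of $\alpha^\vee$---as the place where the Jantzen machinery enters. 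So your approach agrees with what the paper signals, and your rank-two/DVR reduction makes transparent exactly which inequality the Jantzen sum formula must supply. One small remark: you should note that the statement as printed has ${\widetilde S}\oplus{\widetilde S}$ where ${\widetilde S}_{\mathfrak p}\oplus{\widetilde S}_{\mathfrak p}$ is meant (as you tacitly do), and that your final paragraph is still only a pointer to the Jantzen argument rather than a proof---which is all the paper itself offers.
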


\section{Subquotient categories} \label{sec-subquot}
In this section we define subquotient categories of ${\overline{\mathcal O}} _{A,\Lambda}$. We start with recalling the general framework as it appears in \cite{Gab}.
\subsection{Generalities}
Let ${\mathcal{A}}$ be an abelian category and ${\mathcal{N}}\subset{\mathcal{A}}$ a Serre subcategory, i.e.\  a subcategory that has the property that for all short exact sequences $0\to M_1\to M_2\to M_3\to 0$ we have $M_2\in{\mathcal{N}}$ if and only if $M_1,M_3\in{\mathcal{N}}$. One then defines the quotient category ${\mathcal{A}}/{\mathcal{N}}$ as follows. The objects of ${\mathcal{A}}/{\mathcal{N}}$ are the objects of ${\mathcal{A}}$, and $\operatorname{Hom}_{{\mathcal{A}}/{\mathcal{N}}}(M,N)$ is the direct limit of $\operatorname{Hom}_{{\mathcal{A}}}(M^\prime, N/N^\prime)$, where $M^\prime$ and $N^\prime$ are subobjects of $M$ and $N$, resp., such that $M/M^\prime$ and $N^\prime$ are contained in ${\mathcal{N}}$. We denote by $T\colon{\mathcal{A}}\to{\mathcal{A}}/{\mathcal{N}}$ the obvious functor.

In \cite{Gab} the following is proven:

\begin{Fiebigtheorem} \label{thm-propquot}\begin{enumerate}
\item  The quotient category ${\mathcal{A}}/{\mathcal{N}}$ is  abelian and the quotient functor $T\colon{\mathcal{A}}\to{\mathcal{A}}/{\mathcal{N}}$ is exact.
\item  If $(\ast)\quad 0\to M_1\to M_2\to M_3\to 0$ is a short exact sequence in ${\mathcal{A}}/{\mathcal{N}}$, then there exists a short exact sequence $0\to M_1^\prime\to M_2^\prime\to M_3^\prime\to 0$ in ${\mathcal{A}}$ such that $0\to T(M_1^\prime)\to T(M_2^\prime)\to T(M_3^\prime)\to 0$ is isomorphic to $(\ast)$.
\item We have $TM\cong 0$ if and only if $M$ is an object in ${\mathcal{N}}$.
\end{enumerate}
\end{Fiebigtheorem}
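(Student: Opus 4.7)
My approach follows Gabriel's original argument. The key observation is that for any object $M$ the set $\mathcal{F}_M := \{M' \subset M \mid M/M' \in \mathcal{N}\}$ is downward-directed (filtered under reverse inclusion): if $M/M_1', M/M_2' \in \mathcal{N}$, then $M/(M_1' \cap M_2')$ embeds in $M/M_1' \oplus M/M_2'$, which lies in $\mathcal{N}$ by the Serre condition. Dually, the family $\mathcal{G}_N := \{N' \subset N \mid N' \in \mathcal{N}\}$ is upward-directed under inclusion, since a finite sum of subobjects in $\mathcal{N}$ is again in $\mathcal{N}$ (as an extension thereof). Consequently the $\operatorname{Hom}$-space in the statement is a genuine filtered colimit of abelian groups, composition is defined by refining two given representatives to share a common intermediate pair $(M', N')$, and $T$ is additive with the biproduct in $\mathcal{A}/\mathcal{N}$ inherited from $\mathcal{A}$.

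Next I would verify that $\mathcal{A}/\mathcal{N}$ is abelian and $T$ is exact. Given $\bar f : M \to N$ in $\mathcal{A}/\mathcal{N}$ represented by a concrete $f : M^\circ \to N/N^\circ$ in $\mathcal{A}$, I would exhibit $T(\ker f)$ as the kernel of $\bar f$ and $T(\operatorname{coker} f)$ as its cokernel. The universal property is checked at the level of the colimit: any test morphism $\bar g : L \to M$ with $\bar f \bar g = 0$ has a representative that, after further shrinking the source via $\mathcal{F}_L$ and enlarging the quotient via $\mathcal{G}_M$, factors through $\ker f$; and this factorization is unique in the colimit by the same argument. Exactness of $T$ is then automatic, since a short exact sequence in $\mathcal{A}$ is sent to a kernel-cokernel pair on the nose. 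Crucially, the image and coimage of $\bar f$ coincide because they already do for the representative $f$ in the abelian category $\mathcal{A}$, which finishes the verification that $\mathcal{A}/\mathcal{N}$ is abelian.

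For (2), given a short exact sequence $0 \to M_1 \to M_2 \to M_3 \to 0$ in $\mathcal{A}/\mathcal{N}$, I choose a representative $\varphi : M_1^\circ \to M_2/N_2^\circ$ of $M_1 \to M_2$ with $M_1^\circ \in \mathcal{F}_{M_1}$ and $N_2^\circ \in \mathcal{G}_{M_2}$; by (3) the natural maps $T M_1^\circ \to T M_1$ and $T M_2 \to T(M_2/N_2^\circ)$ are isomorphisms, so up to this identification $\varphi$ realizes $\bar\varphi$. Taking the image and cokernel of $\varphi$ in $\mathcal{A}$ produces a short exact sequence in $\mathcal{A}$ whose image under $T$ matches the given one, using the exactness of $T$ established in the previous paragraph. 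For (3), if $M \in \mathcal{N}$ then $M' = 0 \in \mathcal{F}_M$ represents $\operatorname{id}_M$ by the zero morphism, so $TM = 0$; conversely, if $TM = 0$ then $\operatorname{id}_M$ vanishes in the colimit, i.e.\ there exist $M' \in \mathcal{F}_M$ and $N' \in \mathcal{G}_M$ with $M' \subset N'$, which forces $M' \in \mathcal{N}$ (Serre subcategories are closed under subobjects) and $M/M' \in \mathcal{N}$ by construction, so the Serre axiom applied to $0 \to M' \to M \to M/M' \to 0$ gives $M \in \mathcal{N}$.

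The main technical obstacle is the well-definedness of composition and the verification that the proposed kernels and cokernels actually enjoy their universal properties after passing to the filtered colimit; both amount to careful but essentially routine diagram chases juggling pairs $(M', N') \in \mathcal{F}_{\bullet} \times \mathcal{G}_{\bullet}$, and neither uses anything beyond the Serre condition on $\mathcal{N}$.
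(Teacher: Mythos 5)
The paper does not give a proof of this theorem: it is stated verbatim as a recollection of Gabriel's results, with a citation to \cite{Gab} and no argument supplied. There is therefore no ``paper's own proof'' to compare against, but your blind reconstruction is a faithful and essentially correct sketch of Gabriel's original argument. The filtered structure of $\mathcal{F}_M$ under reverse inclusion and of $\mathcal{G}_N$ under inclusion, the resulting filtered-colimit description of $\operatorname{Hom}_{\mathcal{A}/\mathcal{N}}$, the identification of $\ker T\bar f$ and $\operatorname{coker} T\bar f$ with $T$ applied to the kernel and cokernel of a chosen representative, the lifting of short exact sequences by taking the image and cokernel of a representative of the monomorphism, and the characterisation of $\mathcal{N}$ as the full subcategory killed by $T$ are all exactly the ingredients in Gabriel's proof. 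The two places where you compress the hardest are (a) the well-definedness of composition, which requires pulling the second representative's source back along the first and using the Serre condition twice, and (b) the image-coimage comparison, which as you note reduces to the corresponding fact in $\mathcal{A}$ once kernels and cokernels in $\mathcal{A}/\mathcal{N}$ have been identified with $T$ of the $\mathcal{A}$-level constructions; both are genuinely routine given the colimit set-up, and your acknowledgement of this is fair rather than a gap.
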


\subsection{Subquotient categories of ${\overline{\mathcal O}} _{A,\Lambda}$}

Let $A$ be a deformation algebra and $\Lambda$ a $\sim_A$-equivalence class in ${\widehat{\mathfrak h}^\star}_{\operatorname{crit}}$. Let ${\mathcal K}\subset\Lambda$ be a locally closed and locally bounded subset. Recall that ${\mathcal K}={\mathcal K}_+\setminus{\mathcal K}_-$, where ${\mathcal K}_+$ is the smallest open subset that contains ${\mathcal K}$, and ${\mathcal K}_-\subset{\mathcal K}_+$ is the largest open subset which intersects ${\mathcal K}$ trivially.  Then ${\overline{\mathcal O}} _{A,\Lambda}^{{\mathcal K}_-}$ is a Serre subcategory of ${\overline{\mathcal O}} _{A,\Lambda}^{{\mathcal K}_+}$ and we define 
$$
{\overline{\mathcal O}} ^{[{\mathcal K}]}_{A,\Lambda}:={\overline{\mathcal O}} _{A,\Lambda}^{{\mathcal K}_+}/{\overline{\mathcal O}} _{A,\Lambda}^{{\mathcal K}_-}.
$$ 
As before we denote by $T$ the natural quotient functor ${\overline{\mathcal O}} _{A,\Lambda}^{{\mathcal K}_+}\to{\overline{\mathcal O}} _{A,\Lambda}^{[{\mathcal K}]}$.

\begin{Fiebiglemma} 
Let $M$ and $N$ be objects in ${\overline{\mathcal O}} _{A,\Lambda}^{{\mathcal K}_+}$. 
\begin{enumerate}
\item There is a minimal submodule $M_+$ of $M$ such that $M/M_+$ is contained in ${\overline{\mathcal O}} _{A,\Lambda}^{{\mathcal K}_-}$. 
\item There is a maximal submodule $N_-$ of $N$ that is contained in ${\overline{\mathcal O}} _{A,\Lambda}^{{\mathcal K}_-}$.
\item The functorial homomorphism $\operatorname{Hom}_{{\overline{\mathcal O}} _{A,\Lambda}^{{\mathcal K}_+}}(M,N)\to\operatorname{Hom}_{{\overline{\mathcal O}} _{A,\Lambda}^{[{\mathcal K}]}}(TM,TN)$ factors over the natural homomorphism $\operatorname{Hom}_{{\overline{\mathcal O}} _{A,\Lambda}^{{\mathcal K}_+}}(M,N)\to \operatorname{Hom}_{{\overline{\mathcal O}} _{A,\Lambda}^{{\mathcal K}_+}}(M_+,N/N_-)$ and induces an isomorphism
$$
\operatorname{Hom}_{{\overline{\mathcal O}} _{A,\Lambda}^{{\mathcal K}_+}}(M_+,N/N_-)\stackrel{\sim}\to \operatorname{Hom}_{{\overline{\mathcal O}} _{A,\Lambda}^{[{\mathcal K}]}}(TM,TN)
$$
of vector spaces.
\end{enumerate}
\end{Fiebiglemma}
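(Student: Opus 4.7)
The plan is to construct $M_+$ and $N_-$ explicitly from weight-space data, and then deduce the Hom-isomorphism in (3) from the direct-limit description of the Gabriel quotient together with a terminal-object argument in the indexing category.

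For (1), I would set $M_+$ to be the sub-${\widehat{\mathfrak g}}$-$A$-bimodule of $M$ generated by $\bigoplus_{\nu\in{\mathcal K}} M_\nu$. Since $M\in{\overline{\mathcal O}}_{A,\Lambda}^{{\mathcal K}_+}$ has weights only in ${\mathcal K}_+={\mathcal K}\sqcup{\mathcal K}_-$, and since $U({\widehat{\mathfrak g}})_0$ is the only homogeneous component of $U({\widehat{\mathfrak g}})$ that preserves weight spaces, one has $(M_+)_\nu=M_\nu$ for every $\nu\in{\mathcal K}$, so that $M/M_+$ has vanishing $\nu$-weight space for each such $\nu$ and therefore lies in ${\overline{\mathcal O}}_{A,\Lambda}^{{\mathcal K}_-}$. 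Conversely, any $M'\subseteq M$ with $M/M'\in{\overline{\mathcal O}}_{A,\Lambda}^{{\mathcal K}_-}$ must contain $M_\nu$ for each $\nu\in{\mathcal K}$ and hence contains $M_+$. For (2), I would define $N_-$ as the sum of all submodules of $N$ lying in ${\overline{\mathcal O}}_{A,\Lambda}^{{\mathcal K}_-}$; weight-space decomposition commutes with sums of submodules, so $N_-\in{\overline{\mathcal O}}_{A,\Lambda}^{{\mathcal K}_-}$, and it is maximal by construction. An immediate consequence, used below, is that $N/N_-$ has no nonzero subobject in ${\overline{\mathcal O}}_{A,\Lambda}^{{\mathcal K}_-}$: the preimage in $N$ of such a subobject $N''$ would sit in a short exact sequence $0\to N_-\to\widetilde N''\to N''\to 0$ whose outer terms lie in this Serre subcategory, forcing $\widetilde N''\in{\overline{\mathcal O}}_{A,\Lambda}^{{\mathcal K}_-}$ and contradicting maximality of $N_-$.

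For (3), the factorization $\operatorname{Hom}(M,N)\to\operatorname{Hom}(M_+,N/N_-)$ sends $f$ to the composition $M_+\hookrightarrow M\stackrel{f}{\to}N\to N/N_-$. To establish the isomorphism with $\operatorname{Hom}_{{\overline{\mathcal O}}_{A,\Lambda}^{[{\mathcal K}]}}(TM,TN)$, I would unravel Gabriel's definition as $\varinjlim\operatorname{Hom}(M',N/N')$ over pairs $(M',N')$ with $M/M',N'\in{\overline{\mathcal O}}_{A,\Lambda}^{{\mathcal K}_-}$, which by (1) and (2) are precisely the pairs with $M_+\subseteq M'$ and $N'\subseteq N_-$. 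The transition maps of the system restrict in the first argument and project in the second, so the indexing category is directed and $(M_+,N_-)$ is its terminal object; the colimit is therefore realized by $\operatorname{Hom}(M_+,N/N_-)$. Surjectivity onto the colimit is then immediate, as any representative $M'\to N/N'$ can be pushed along the unique arrow to $(M_+,N_-)$ without changing its class. For injectivity, a $g\in\operatorname{Hom}(M_+,N/N_-)$ that vanishes in the colimit becomes zero after restricting to some $M''\subseteq M_+$ with $M_+/M''\in{\overline{\mathcal O}}_{A,\Lambda}^{{\mathcal K}_-}$ and projecting onto $(N/N_-)/N''$ with $N''\in{\overline{\mathcal O}}_{A,\Lambda}^{{\mathcal K}_-}$; applying the argument of (1) inside $M_+$ forces $M''=M_+$, so the image of $g$ lies in $N''$, and the auxiliary fact above forces $N''=0$.

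The main technical obstacle is the clean identification of the directed indexing category together with the no-nonzero-subobject fact for $N/N_-$; once these are in place, the rest of (3) reduces to routine bookkeeping with weight spaces and a terminal-object-computes-the-colimit argument.
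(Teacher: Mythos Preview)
Your proof is correct and follows the same approach as the paper: $M_+$ is the submodule generated by the weight spaces $M_\nu$ with $\nu\in{\mathcal K}$, $N_-$ is the sum of all submodules of $N$ lying in ${\overline{\mathcal O}}_{A,\Lambda}^{{\mathcal K}_-}$, and (3) follows from the Gabriel description of the quotient Hom. The paper's proof is a two-line sketch; you have simply spelled out the details.

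One minor remark on (3): once you have established that $(M_+,N_-)$ is terminal in the indexing category of the direct limit, the identification of the colimit with $\operatorname{Hom}(M_+,N/N_-)$ is automatic, and your subsequent separate surjectivity/injectivity verifications are redundant. They are not wrong, but they re-prove the terminal-object fact in disguise (your injectivity step amounts to showing $(M_+)_+=M_+$ and $(N/N_-)_-=0$, which is exactly the statement that nothing lies strictly beyond $(M_+,N_-)$ in the system).
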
 
\begin{proof} The object $M_+$ is the submodule of $M$ generated by all weight spaces with weights in ${\mathcal K}$, and the  object $M_-$ is the sum of all  submodules  of $M$ with weights contained in ${\mathcal K}_-$. The statement in (3) the follows immediately from (1) and (2) and the definition.
\end{proof}

\subsection{Projectives in ${\overline{\mathcal O}} _{A,\Lambda}^{[{\mathcal K}]}$}
The following results concern easy to prove properties of the projectives in our subquotient categories.

\begin{Fiebiglemma}  \label{lemma-projinquot} Let $\mu\in{\mathcal K}$. Then the functorial homomorphism
$$
\operatorname{Hom}_{{\overline{\mathcal O}} _{A,\Lambda}^{{\mathcal K}_+}}({\overline{P}}^{{\mathcal K}_+}_A(\mu),N)\to \operatorname{Hom}_{{\overline{\mathcal O}} _{A,\Lambda}^{[{\mathcal K}]}}(T{\overline{P}}^{{\mathcal K}_+}_A(\mu),TN)
$$ 
is an isomorphism for all objects $N$ in ${\overline{\mathcal O}} _{A,\Lambda}^{{\mathcal K}_+}$.
\end{Fiebiglemma}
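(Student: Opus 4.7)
The plan is to reduce the claim to the preceding lemma. Write $P := {\overline{P}}^{{\mathcal K}_+}_A(\mu)$. The preceding lemma already supplies a natural isomorphism
$$
\operatorname{Hom}_{{\overline{\mathcal O}}^{{\mathcal K}_+}_{A,\Lambda}}(P_+,N/N_-)\stackrel{\sim}\to\operatorname{Hom}_{{\overline{\mathcal O}}^{[{\mathcal K}]}_{A,\Lambda}}(TP,TN),
$$
so it suffices to exhibit a natural isomorphism $\operatorname{Hom}(P,N)\cong\operatorname{Hom}(P_+,N/N_-)$ that factors through this identification. The two ingredients I would verify are (a) $P=P_+$ and (b) $\operatorname{Hom}(P,N_-)=0$; everything else is routine homological bookkeeping.

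For (a), Theorem \ref{thm-propproj}(4) (BGG-reciprocity) shows that $P$ admits a restricted Verma flag whose factors ${\overline{\Delta}}_A(\nu)$ satisfy $\nu\in{\mathcal K}_+$ and $[{\overline{\Delta}}_{\mathbb K}(\nu):L_{\mathbb K}(\mu)]\ne 0$, and the latter forces $\mu\le\nu$. Since ${\mathcal K}_-$ is open, hence downward closed under $\le$, and $\mu\in{\mathcal K}={\mathcal K}_+\setminus{\mathcal K}_-$, no $\nu\ge\mu$ can lie in ${\mathcal K}_-$. Thus every Verma factor of $P$ has highest weight in ${\mathcal K}$. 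Each ${\overline{\Delta}}_A(\nu)$ is cyclically generated by a vector of weight $\nu\in{\mathcal K}$, so a straightforward induction along the Verma filtration $0=P_0\subset P_1\subset\dots\subset P_n=P$ shows that $P$ is generated by weight spaces whose weights lie in ${\mathcal K}$. By the description of $M_+$ given in the proof of the preceding lemma, this is exactly $P=P_+$.

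For (b), $N_-$ lies in ${\overline{\mathcal O}}^{{\mathcal K}_-}_{A,\Lambda}$ and so has all weights in ${\mathcal K}_-$; in particular $(N_-)_\nu=0$ for every $\nu\in{\mathcal K}$, which gives $\operatorname{Hom}({\overline{\Delta}}_A(\nu),N_-)=0$ for each Verma factor of $P$. The short exact sequences of the Verma flag together with the left-exactness of $\operatorname{Hom}(-,N_-)$ then yield $\operatorname{Hom}(P,N_-)=0$ by induction on the flag length. Since $P$ is projective in ${\overline{\mathcal O}}^{{\mathcal K}_+}_{A,\Lambda}$, applying the exact functor $\operatorname{Hom}(P,-)$ to $0\to N_-\to N\to N/N_-\to 0$ and invoking (b) produces $\operatorname{Hom}(P,N)\stackrel{\sim}\to\operatorname{Hom}(P,N/N_-)$; combined with (a) and the preceding lemma this is the desired isomorphism. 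The only substantive input is (a) — the identification $P=P_+$ — and it rests on the interplay between BGG-reciprocity, the downward closedness of ${\mathcal K}_-$, and the cyclic nature of restricted Verma modules.
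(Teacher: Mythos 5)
Your proof is correct and follows essentially the same structure as the paper's: reduce to the preceding lemma by showing $P=P_+$ and $\operatorname{Hom}(P,N_-)=0$, then use projectivity of $P$ in ${\overline{\mathcal O}}^{{\mathcal K}_+}_{A,\Lambda}$. The one place where you diverge is the justification of $P=P_+$: the paper simply observes that $P={\overline{P}}^{{\mathcal K}_+}_A(\mu)$ is generated by its single $\mu$-weight space (the standard projective-cover fact), and since $\mu\in{\mathcal K}$ this immediately gives $P=P_+$. You instead route through BGG-reciprocity to show every Verma factor has highest weight in ${\mathcal K}$, and then induct along the Verma flag. Both are valid; the paper's argument is shorter and does not need the Verma flag or BGG-reciprocity, while yours is more self-contained in that it only uses stated properties of the projectives rather than the (unstated but standard) cyclicity of $P$ over its top weight space. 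Similarly, for $\operatorname{Hom}(P,N_-)=0$ the paper just uses $(N_-)_\mu=0$ together with $P$ being generated by $P_\mu$, whereas you again go through the Verma flag; again both work.
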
 

\begin{proof} Note that ${\overline{P}}^{{\mathcal K}_+}_A(\mu)_+={\overline{P}}^{{{\mathcal K}_+}}_A(\mu)$ as ${\overline{P}}^{{\mathcal K}_+}_A(\mu)$ is generated by its $\mu$-weight space. Hence $\operatorname{Hom}_{{\overline{\mathcal O}} _{A,\Lambda}^{[{\mathcal K}]}}(T{\overline{P}}^{{\mathcal K}_+}_A(\mu),TN)=\operatorname{Hom}_{{\overline{\mathcal O}} _{A,\Lambda}^{{\mathcal K}_+}}({\overline{P}}^{{\mathcal K}_+}_A(\mu),N/N_-)$. As ${\overline{P}}^{{{\mathcal K}_+}}_A(\mu)$ is projective in ${\overline{\mathcal O}} ^{{{\mathcal K}_+}}_{A,\Lambda}$ and as $\operatorname{Hom}_{{\overline{\mathcal O}} ^{{{\mathcal K}_+}}_{A,\Lambda}}({\overline{P}}^{{\mathcal K}_+}_A(\mu),N_-)=0$ (as $(N_-)_\mu=0$), the claim follows. 
\end{proof}

\begin{Fiebigproposition}\label{prop-propquot} 
Let $\mu\in{\mathcal K}$. Then $T{\overline{P}}^{{\mathcal K}_+}_A(\mu)$ is projective in ${\overline{\mathcal O}} _{A,\Lambda}^{[{\mathcal K}]}$. 
\end{Fiebigproposition}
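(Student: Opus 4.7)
The plan is to verify projectivity of $T{\overline{P}}^{{\mathcal K}_+}_A(\mu)$ directly, by checking that the covariant Hom functor out of it is exact on ${\overline{\mathcal O}} _{A,\Lambda}^{[{\mathcal K}]}$. The essential ingredients will be Lemma \ref{lemma-projinquot} (which translates Homs out of $T{\overline{P}}^{{\mathcal K}_+}_A(\mu)$ into Homs out of ${\overline{P}}^{{\mathcal K}_+}_A(\mu)$ in the ambient category) and Theorem \ref{thm-propquot}(2) (which lets us lift short exact sequences from the quotient category back to the ambient one).

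More precisely, I would first fix an arbitrary short exact sequence
$$
0\to X_1\to X_2\to X_3\to 0
$$
in ${\overline{\mathcal O}} _{A,\Lambda}^{[{\mathcal K}]}$. By Theorem \ref{thm-propquot}(2), this sequence is isomorphic to the image under $T$ of a short exact sequence
$$
0\to N_1\to N_2\to N_3\to 0
$$
in ${\overline{\mathcal O}} _{A,\Lambda}^{{\mathcal K}_+}$. Applying $\operatorname{Hom}_{{\overline{\mathcal O}} _{A,\Lambda}^{{\mathcal K}_+}}({\overline{P}}^{{\mathcal K}_+}_A(\mu),-)$ to this lifted sequence yields a short exact sequence of $A$-modules, because ${\overline{P}}^{{\mathcal K}_+}_A(\mu)$ is projective in the truncated category ${\overline{\mathcal O}} _{A,\Lambda}^{{\mathcal K}_+}$ by its defining property.

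Next I would use Lemma \ref{lemma-projinquot}, which gives a functorial isomorphism
$$
\operatorname{Hom}_{{\overline{\mathcal O}} _{A,\Lambda}^{{\mathcal K}_+}}({\overline{P}}^{{\mathcal K}_+}_A(\mu),N_i)\stackrel{\sim}\to \operatorname{Hom}_{{\overline{\mathcal O}} _{A,\Lambda}^{[{\mathcal K}]}}(T{\overline{P}}^{{\mathcal K}_+}_A(\mu),TN_i)
$$
for each $i=1,2,3$. Transporting the exact sequence through these isomorphisms yields exactness of
$$
0\to\operatorname{Hom}(T{\overline{P}}^{{\mathcal K}_+}_A(\mu),TN_1)\to\operatorname{Hom}(T{\overline{P}}^{{\mathcal K}_+}_A(\mu),TN_2)\to\operatorname{Hom}(T{\overline{P}}^{{\mathcal K}_+}_A(\mu),TN_3)\to 0
$$
in ${\overline{\mathcal O}} _{A,\Lambda}^{[{\mathcal K}]}$, which, since $TN_i\cong X_i$, is what is required.

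There is not really a hard step here, since both Lemma \ref{lemma-projinquot} and Theorem \ref{thm-propquot}(2) do the heavy lifting; the only thing to check carefully is that the isomorphism from Lemma \ref{lemma-projinquot} is natural in $N$, so that it commutes with the connecting maps in the sequence. This naturality follows from the construction of the isomorphism as a composition of the canonical map $\operatorname{Hom}_{{\overline{\mathcal O}} _{A,\Lambda}^{{\mathcal K}_+}}({\overline{P}}^{{\mathcal K}_+}_A(\mu),N)\to \operatorname{Hom}_{{\overline{\mathcal O}} _{A,\Lambda}^{{\mathcal K}_+}}({\overline{P}}^{{\mathcal K}_+}_A(\mu),N/N_-)$ with the localization isomorphism, both of which are functorial in $N$.
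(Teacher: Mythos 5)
Your proof is correct and follows the same route as the paper: lift a short exact sequence from the quotient to the ambient truncated category via Theorem \ref{thm-propquot}(2), use projectivity of ${\overline{P}}^{{\mathcal K}_+}_A(\mu)$ there, and transport back with the functorial isomorphism of Lemma \ref{lemma-projinquot}. The only cosmetic slip is that the final exact sequence lives in $A$-modules, not in ${\overline{\mathcal O}}_{A,\Lambda}^{[{\mathcal K}]}$.
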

\begin{proof} We have to show that $\operatorname{Hom}_{{\overline{\mathcal O}} _{A,\Lambda}^{[{\mathcal K}]}}(T {\overline{P}}^{{\mathcal K}_+}_A(\mu),\cdot)$ maps a short exact sequence in ${\overline{\mathcal O}} _{A,\Lambda}^{[{\mathcal K}]}$ to a short exact sequence of abelian groups. By Theorem \ref{thm-propquot} is suffices to check this property on short exact sequences in the image of $T$. For those we can replace $\operatorname{Hom}_{{\overline{\mathcal O}} _{A,\Lambda}^{[{\mathcal K}]}}(T {\overline{P}}^{{\mathcal K}_+}_A(\mu),T \cdot)$  by $\operatorname{Hom}_{{\overline{\mathcal O}} _{A,\Lambda}^{{\mathcal K}_+}}({\overline{P}}^{{\mathcal K}_+}_A(\mu),\cdot)$ by Lemma \ref{lemma-projinquot}. So the projectivity of ${\overline{P}}^{{\mathcal K}_+}_A(\mu)$ in ${\overline{\mathcal O}} _{A,\Lambda}^{{\mathcal K}_+}$ yields the statement. 
\end{proof}

\subsection{Objects admitting a Verma flag}
We denote by ${\widetilde{\Delta}} _A(\mu):= T {\overline{\Delta}} _A(\mu)$ for $\mu\in{\mathcal K}$ the Verma module in the subquotient category ${\overline{\mathcal O}} _{A,\Lambda}^{[{\mathcal K}]}$.

\begin{Fiebigdefinition} We say that an object $M$ of ${\overline{\mathcal O}} _{A,\Lambda}^{[{\mathcal K}]}$ {\em admits a Verma flag} if there is a finite filtration $0=M_0\subset M_1\subset\dots\subset M_n=M$ such that $M_i/M_{i-1}$ is isomorphic to ${\widetilde{\Delta}} _A(\mu_i)$ for some $\mu_i\in{\mathcal K}$.
\end{Fiebigdefinition}
We denote by ${\overline{\mathcal O}} _{A,\Lambda}^{[{\mathcal K}],V}$ the full subcategory of ${\overline{\mathcal O}} _{A,\Lambda}^{[{\mathcal K}]}$ that contains all objects that admit a Verma flag.
If $M\in{\overline{\mathcal O}} _{A,\Lambda}^{[{\mathcal K}]}$ admits a Verma flag, then so does $TM$, as the functor $T$ is exact. In particular, the objects $T{\overline{P}}^{{\mathcal K}_+}_A(\mu)$ admit a Verma flag. 
\section{Moment graphs}\label{sec-momgra}
Now we introduce the ``combinatorial part'' of the picture. It has a priori nothing to do with Lie algebras or their representations. More information on the following constructions can be found in \cite{FieAdv}.
\subsection{Moment graphs associated with equivalence classes}
Let $\Lambda$ be a $\sim_{{\widetilde S}}$-equivalence class in $\Lambda$. Recall that we associated with $\Lambda$ the set of $\Lambda$-integral roots $\widehat R_\Lambda$ and the $\Lambda$-integral Weyl group $\widehat{{\mathcal W}}_\Lambda$. We now associate a moment graph with $\Lambda$.  It is a moment graph over the lattice $X^\vee\subset{\mathfrak h}$ of (finite) coweights. 

\begin{Fiebigdefinition} The moment graph ${\mathcal{G}}_{\Lambda}$ associated with $\Lambda$ is given as follows:
\begin{itemize}
\item The set of vertices is $\Lambda$. It is partially order by the order $\le$ on $\Lambda$. 
\item The vertices $\lambda,\mu\in\Lambda$ are connected by an edge if there is a  $\Lambda$-integral root $\beta\in \widehat R_\Lambda$ with  $\mu=s_\beta.\lambda$. The edge connecting $\lambda$ and $\mu$ is then labelled by $\pm{\overline{\beta^\vee}}\in X^\vee/\{\pm1\}$ (recall that we denote by $H\mapsto \overline H$ the map $\widehat{\mathfrak{h}}\to\mathfrak{h}$).
\end{itemize}
\end{Fiebigdefinition}
For any subset ${{\mathcal K}}$ of $\Lambda$ we denote by ${\mathcal{G}}_{{\mathcal K}}$ the full  sub-moment graph of ${\mathcal{G}}_\Lambda$ that contains all vertices in ${{\mathcal K}}$. 
\begin{Fiebiglemma}  Suppose that ${{\mathcal K}}$ has the property that  $\mu\in {{\mathcal K}}$ implies $\mu+n\delta\not\in {{\mathcal K}}$ for all $n\ne 0$. Then $({\mathcal{G}}_{{\mathcal K}},{\mathbb C})$ satisfies the GKM-condition.
\end{Fiebiglemma}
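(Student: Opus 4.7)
The plan is to verify the standard GKM condition: at every vertex $\lambda\in{\mathcal K}$, the labels of any two distinct edges of ${\mathcal G}_{{\mathcal K}}$ incident to $\lambda$ must be linearly independent in $X^\vee\otimes_{\mathbb Z}{\mathbb C}\subset {\mathfrak h}$. An edge at $\lambda$ comes from an integral real root $\beta=\alpha+n\delta$ (with $\alpha\in R$, $n\in{\mathbb Z}$) such that $s_\beta.\lambda\in{\mathcal K}\setminus\{\lambda\}$, and its label is $\pm\overline{\beta^\vee}=\pm\alpha^\vee$. Because the labels record only the finite component $\alpha\in R$, two labels are proportional precisely when the underlying finite roots agree up to sign. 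So I reduce at once to the potentially bad configuration $\beta_1=\alpha+n_1\delta$, $\beta_2=\varepsilon\alpha+n_2\delta$ with $\varepsilon\in\{\pm 1\}$, and aim to derive a contradiction.

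The decisive observation uses that $\lambda$ sits at the critical level. From the standard formula $(\alpha+n\delta)^\vee=\alpha^\vee+\tfrac{2n}{(\alpha,\alpha)}K$ together with
\[
\langle\lambda+\rho,K\rangle=\langle\lambda,K\rangle+\langle\rho,K\rangle={\operatorname{crit}}-{\operatorname{crit}}=0,
\]
the $K$-term drops out and we get $\langle\lambda+\rho,\beta_1^\vee\rangle=m$ and $\langle\lambda+\rho,\beta_2^\vee\rangle=\varepsilon m$, where $m:=\langle\lambda+\rho,\alpha^\vee\rangle$. Applying the dot-action formula yields
\[
s_{\beta_1}.\lambda=\lambda-m\alpha-mn_1\delta,\qquad s_{\beta_2}.\lambda=\lambda-m\alpha-\varepsilon m n_2\delta.
\]
Note that $m\ne 0$, for otherwise $s_{\beta_i}.\lambda=\lambda$ and no edge exists.

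Subtracting these gives
\[
s_{\beta_1}.\lambda-s_{\beta_2}.\lambda=m(\varepsilon n_2-n_1)\,\delta.
\]
A direct check shows that the two edges $\{\lambda,s_{\beta_1}.\lambda\}$ and $\{\lambda,s_{\beta_2}.\lambda\}$ coincide exactly when $\beta_2=\pm\beta_1$, which in the present parametrization is equivalent to $\varepsilon n_2=n_1$. Hence, assuming the edges are distinct, $m(\varepsilon n_2-n_1)$ is a non-zero integer, and the two endpoints $s_{\beta_1}.\lambda,s_{\beta_2}.\lambda\in{\mathcal K}$ differ by a non-zero multiple of $\delta$. This directly contradicts the standing hypothesis on ${\mathcal K}$ and completes the verification.

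The only delicate point is keeping track of the $\pm$-signs in the coroot formula and confirming that the critical-level identity $\langle\lambda+\rho,K\rangle=0$ genuinely removes the $n$-dependence from $\langle\lambda+\rho,\beta^\vee\rangle$; once these are set up correctly, no further input is required.
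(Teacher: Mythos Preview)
Your proof is correct and follows essentially the same approach as the paper's: both reduce to showing that two edges at $\lambda$ with proportional labels must have endpoints differing by a nonzero multiple of $\delta$, using the critical-level identity $\langle\lambda+\rho,K\rangle=0$ to see that $s_{\alpha+n\delta}.\lambda-s_{\alpha+m\delta}.\lambda\in{\mathbb C}\delta$. Your version is simply more explicit, writing out the coroot formula and tracking the sign $\varepsilon$, whereas the paper compresses this into the single remark that the difference is a multiple of $\delta$ ``as $\lambda\in{\widehat{\mathfrak h}}^\star_{\operatorname{crit}}$''; one minor point is that you need only $m(\varepsilon n_2-n_1)\ne 0$, not that it be an integer.
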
 
Recall that the GKM-condition says that for any two disctinct edges $E$ and $\tilde E$ of ${\mathcal{G}}_{{\mathcal K}}$ with labels $\beta^\vee$ and $\tilde \beta^\vee$ that are adjacent to a common vertex we have $\beta^\vee\not\in\mathbb C\tilde\beta^\vee$.
\begin{proof} As distinct positive coroots are ${\mathbb C}$-linearly independent, it suffices to show that there is no $\lambda\in {{\mathcal K}}$ such that $s_{\alpha+n\delta}.\lambda$ and $s_{\alpha+m\delta}.\lambda$ are in ${{\mathcal K}}$ for distinct $m,n\in{\mathbb Z}$. As $s_{\alpha+n\delta}.\lambda-s_{\alpha+m\delta}.\lambda$ is a multiple of $\delta$ (as $\lambda\in {\widehat{\mathfrak h}}^\star_{\operatorname{crit}}$), this follows immediately from our assumption on ${{\mathcal K}}$.
\end{proof}

\subsection{The structure algebra of ${\mathcal{G}}_{{\mathcal K}}$}
We will study modules over the following commutative $S$-algebra.
\begin{Fiebigdefinition} The algebra 
$$
{\mathcal{Z}}({{\mathcal K}}):=\left\{(z_\mu)\in\prod_{\mu\in {{\mathcal K}}} S\left|\begin{matrix} z_\mu\equiv z_\lambda\mod\alpha^\vee\\
\text{ for all edges $\lambda\stackrel{\alpha^\vee}{\text{---\!\!\!---\!\!\!---}}\mu$ of ${\mathcal{G}}_{{\mathcal K}}$}
\end{matrix}
\right\}\right.
$$
is called the {\em structure algebra of ${\mathcal{G}}_{{\mathcal K}}$}. 
\end{Fiebigdefinition}
For a deformation algebra $A$ we define ${\mathcal{Z}}_A({{\mathcal K}}):={\mathcal{Z}}({{\mathcal K}})\otimes_S A$. We denote by ${\mathcal{Z}}_A({{\mathcal K}}){\operatorname{-mod}}^f$ the category of ${\mathcal{Z}}_A({{\mathcal K}})$-modules that are torsion free and finitely generated as $A$-modules. For any homomorphism $A\to A^\prime$ we obtain a functor
\begin{align*}
{\mathcal{Z}}_A({{\mathcal K}}){\operatorname{-mod}}^f&\to{\mathcal{Z}}_{A^\prime}({{\mathcal K}}){\operatorname{-mod}}^f\\
M&\mapsto M\otimes_A A^\prime.
\end{align*}

Let $\mu$ be an element of ${{\mathcal K}}$. The following is the {\em standard object} with parameter $\mu$ in ${\mathcal{Z}}_A{\operatorname{-mod}}^f$. 
\begin{Fiebigdefinition} We denote by ${\mathcal V}_A(\mu)$ the ${\mathcal{Z}}_A({{\mathcal K}})$-module that is free of rank $1$ as an $A$-module and on which $(z_\nu)\in{\mathcal{Z}}_A({{\mathcal K}})$ acts as multiplication with $z_\mu$.
\end{Fiebigdefinition}

\subsection{Generic decomposition} Suppose now that ${{\mathcal K}}\subset\Lambda$ is a finite set. 
It is easy to prove that the inclusion ${\mathcal{Z}}_{{\widetilde S}}({{\mathcal K}})\subset\bigoplus_{\mu\in{\mathcal K}} {\widetilde S}$ becomes a bijection after tensoring with $Q={{\widetilde S}}_{(0)}$, i.e.\  we canonically have
$$
{\mathcal{Z}}_Q({{\mathcal K}})=\bigoplus_{\mu\in{\mathcal K}}Q.
$$
Hence for any object $N$ in ${\mathcal{Z}}_{{\widetilde S}}({{\mathcal K}}){\operatorname{-mod}}^f$ we have a canonical decomposition $N_Q=\bigoplus N_Q^\mu$. In particular,
 if $M$ is a ${\mathcal{Z}}_{{\widetilde S}}({{\mathcal K}})$-module that is torsion free over ${\widetilde S}$, then $M\subset M_Q=\bigoplus_{\mu\in{\mathcal K}} M_Q^{\mu}$.

\begin{Fiebigdefinition} Let ${\mathcal I}$ be a closed subset in ${{\mathcal K}}$ with open complement ${\mathcal J}$. For a ${\mathcal{Z}}_{{\widetilde S}}({{\mathcal K}})$-module $M$ that is torsion free over ${\widetilde S}$ we define
$$
M_{\mathcal I}:=M\cap \bigoplus_{\mu\in{\mathcal I}} M_Q^\mu
$$
and
$$
M^{\mathcal J}:=M/M_{\mathcal I}.
$$
\end{Fiebigdefinition}

\subsection{The category ${\mathcal C}_{{\widetilde S}}({{\mathcal K}})$}\label{sec-defC}

Let $M$ be an object in ${\mathcal{Z}}_{{\widetilde S}}({{\mathcal K}}){\operatorname{-mod}}^f$. 
\begin{Fiebigdefinition} We say that $M$ admits a Verma flag if $M^{\mathcal J}$ is free as an ${\widetilde S}$-module for each open subset ${\mathcal J}$ of ${{\mathcal K}}$.
\end{Fiebigdefinition}

We denote by ${\mathcal C}_{{\widetilde S}}({{\mathcal K}})$ the full subcategory of ${\mathcal{Z}}_{{\widetilde S}}({{\mathcal K}}){\operatorname{-mod}}^f$ that contains all objects that admit a Verma flag. This category carries a natural  but non-standard exact structure that we will utilize in the following.

\begin{Fiebigdefinition} Let $0\to M_1\to M_2\to M_3\to 0$ be a sequence in ${\mathcal C}_{{\widetilde S}}({{\mathcal K}})$. We say that it is {\em short exact}, if for any open subset ${\mathcal J}$ the induced sequence
$$
0\to (M_1)^{\mathcal J}\to (M_2)^{\mathcal J}\to (M_3)^{\mathcal J}\to 0
$$
is a short exact sequence of abelian groups. 
\end{Fiebigdefinition}
This defines indeed an exact structure (cf.\   \cite{FieAdv}), so the notion of {\em projective object} now makes sense in ${\mathcal C}_{{\widetilde S}}({\mathcal K})$.

\subsection{Projective objects in ${\mathcal C}_{{\widetilde S}}({{\mathcal K}})$}

Let $M$ be an object in  ${\mathcal C}_{{\widetilde S}}({{\mathcal K}})$. Then it is free as an ${\widetilde S}$-module, hence we have an inclusion $M\subset M_Q=\bigoplus_{\mu\in {\mathcal K}} M_Q^\mu$. We denote by $M^\mu$ the image of the homomorphism $M\to M_Q\stackrel{\pi_\mu}\to M_Q^\mu$. Let $E\colon\chi\stackrel{\alpha^\vee}{\text{---\!\!\!---\!\!\!---}} \chi^\prime$ be an edge. We let $M^{\chi,\chi^\prime}$ be the image of the homomorphism $M\to M_Q\stackrel{(\pi_\chi,\pi_{\chi^\prime})}\to M_Q^\chi\oplus M_Q^{\chi^\prime}$. We define the {\em local structure algebra at $E$} as
$$
{\mathcal{Z}}_{{\widetilde S}}(E)=\{(z_\chi,z_{\chi^\prime})\in {\widetilde S} \oplus {\widetilde S}\mid z_\chi\equiv z_{\chi^\prime}\mod\alpha^\vee\}.
$$
Finally, let $M^E$ be the ${\mathcal{Z}}_{{\widetilde S}}(E)$-submodule in $M^{\chi}\oplus M^{\chi^\prime}$ that is generated by $M^{\chi,\chi^\prime}$. 
\begin{Fiebigproposition}[\cite{FieAdv}]\label{prop-projinC} Suppose that $M$ satsifies the following properties:
\begin{enumerate}
\item $M^\chi$ is a free ${\widetilde S}$-module for all $\chi\in \mathcal K$,
\item For an edge $E\colon \chi\stackrel{\alpha^\vee}{\text{---\!\!\!---\!\!\!---}}\chi^\prime$ with $\chi<\chi^\prime$ we have $(M^E)_\chi:=M^E\cap M^\chi=\alpha^\vee M^{\chi}$. 
\end{enumerate}
Then $M$ is projective in ${\mathcal C}_{{\widetilde S}}({{\mathcal K}})$.
\end{Fiebigproposition}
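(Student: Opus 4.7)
The plan is to prove projectivity of $M$ by induction on $|\mathcal K|$, with each inductive step reducing the lifting problem to a single maximal vertex where condition (2) resolves the obstruction. Recall that $M$ is projective in $\mathcal C_{{\widetilde S}}({\mathcal K})$ if for every short exact sequence $0\to N_1\to N_2\to N_3\to 0$ (in the sense of Section \ref{sec-defC}) and every morphism $\phi\colon M\to N_3$, there exists a morphism $\tilde\phi\colon M\to N_2$ lifting $\phi$.

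The base case $|\mathcal K|=1$ is immediate: there are no edges, ${\mathcal{Z}}_{{\widetilde S}}({\mathcal K})={\widetilde S}$, the exact structure reduces to the ordinary one on ${\widetilde S}$-modules, and condition (1) gives $M=M^\chi$ free over ${\widetilde S}$, hence projective. For the induction step, choose $\chi\in{\mathcal K}$ maximal and set ${\mathcal J}={\mathcal K}\setminus\{\chi\}$ (open). One first verifies that $M^{{\mathcal J}}$ lies in $\mathcal C_{{\widetilde S}}({\mathcal J})$ and inherits hypotheses (1) and (2) with respect to the sub-moment graph ${\mathcal{G}}_{{\mathcal J}}$, since passing to the quotient only removes the $\chi$-stalk and the edges incident to $\chi$. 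By the inductive hypothesis $M^{{\mathcal J}}$ is projective in $\mathcal C_{{\widetilde S}}({\mathcal J})$, and so the restricted morphism $\phi^{{\mathcal J}}\colon M^{{\mathcal J}}\to N_3^{{\mathcal J}}$ lifts to some $\psi\colon M^{{\mathcal J}}\to N_2^{{\mathcal J}}$.

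The remaining task is to extend this partial lift to a ${\mathcal{Z}}_{{\widetilde S}}({\mathcal K})$-linear map $\tilde\phi\colon M\to N_2$ over $\phi$. Unravelling the inclusion $M\subset\bigoplus_\mu M^\mu\subset\bigoplus_\mu M_Q^\mu$, this amounts to choosing a ${\widetilde S}$-linear map $\tilde\phi_\chi\colon M^\chi\to N_2^\chi$ lifting the $\chi$-component of $\phi$ through the surjection $N_2^\chi\twoheadrightarrow N_3^\chi$, and then ensuring that the combined stalk-wise data assemble into a global ${\mathcal{Z}}$-module map. Freeness of $M^\chi$ from (1) guarantees that a set-theoretic ${\widetilde S}$-linear lift $\tilde\phi_\chi$ exists; the nontrivial requirement is that along every edge $E\colon\chi\stackrel{\alpha^\vee}{\text{---\!\!\!---\!\!\!---}}\chi'$ at $\chi$ (necessarily with $\chi'\in{\mathcal J}$ by maximality), the induced map $M^E\to N_2^\chi\oplus N_2^{\chi'}$ factors through $N_2^E$ as a ${\mathcal{Z}}_{{\widetilde S}}(E)$-module map.

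The main obstacle is precisely this edge compatibility, and condition (2) makes it automatic. Indeed, $(M^E)_\chi=\alpha^\vee M^\chi$ says that elements of $M^E$ with vanishing $\chi'$-component are already $\alpha^\vee$-divisible inside $M^\chi$, so any ${\widetilde S}$-linear $\tilde\phi_\chi$ sends them into $\alpha^\vee N_2^\chi\oplus 0\subset N_2^E$, where the inclusion uses that $(\alpha^\vee,0)\in{\mathcal{Z}}_{{\widetilde S}}(E)$ acts on $N_2^{\chi,\chi'}$. The remaining generators of $M^E$ come from elements of $M^{\chi,\chi'}$ with nonzero $\chi'$-component, and on these $\psi$ already prescribes the correct $\chi'$-value; the residual freedom in the $\chi$-value modulo $\alpha^\vee N_2^\chi$---again permitted by condition (2)---absorbs any obstruction coming from the prescribed $\chi'$-data. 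Assembling these choices produces the desired global lift $\tilde\phi$ and completes the induction.
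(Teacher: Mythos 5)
The paper itself does not prove this proposition; it is quoted from \cite{FieAdv}, where the argument proceeds via the localization equivalence between $\mathcal{Z}$-modules admitting a Verma flag and flabby sheaves on the moment graph, and then via the Braden--MacPherson construction of indecomposable projective sheaves. Your proposal attempts a direct lifting argument instead; in outline this is close to how one eventually verifies the local conditions, but as written there are two genuine gaps at exactly the points where the difficulty lives.

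First, the step ``condition (2) makes edge compatibility automatic'' is not justified. You split the generators of $M^E$ into those with vanishing $\chi'$-component (where condition (2) and $\widetilde{S}$-linearity of $\tilde\phi_\chi$ do handle things) and those with nonzero $\chi'$-component, and then assert that the freedom in $\tilde\phi_\chi$ modulo $\alpha^\vee N_2^\chi$ ``absorbs any obstruction.'' But this freedom has to absorb the obstruction \emph{simultaneously along all edges adjacent to $\chi$}. Different edges carry different labels $\alpha^\vee_1,\alpha^\vee_2,\dots$, and the compatibility at each edge imposes a congruence on $\tilde\phi_\chi$ modulo the corresponding $\alpha_i^\vee$. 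Solving these congruences simultaneously requires the GKM condition (pairwise non-proportionality of the labels, hence coprimality in $\widetilde S$), which you never invoke. Without it the argument does not close; with it, one still has to exhibit the solution, typically via a Chinese-remainder or Braden--MacPherson-style argument.

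Second, even granted a choice of $\tilde\phi_\chi$ that is compatible along every edge, it is not automatic that the resulting tuple $(\tilde\phi_\chi,(\psi^\mu)_{\mu\in\mathcal J})$ sends $M$ into $N_2$. For $m=(m_\mu)\in M$ one can lift $\psi(m^{\mathcal J})\in N_2^{\mathcal J}$ to some $n\in N_2$, and the issue is whether $\tilde\phi_\chi(m_\chi)-n^\chi$ lies in $(N_2)_{\{\chi\}}=N_2\cap N_{2,Q}^\chi$, which is in general a proper submodule of $N_2^\chi$. The passage from ``agrees on stalks and edges'' to ``defines a global morphism into $N_2$'' uses that $N_2$ is reconstructible from its local data --- in \cite{FieAdv} this is the flabbiness of the sheaf $\mathcal L(N_2)$ associated to a Verma-flag object, and it is a theorem, not a formality. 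Your argument treats it as immediate. The inductive reduction to a maximal vertex and the verification that $M^{\mathcal J}$ inherits (1) and (2) are fine; the two points above are where the substance of the cited result lies and where the write-up would need to do real work.
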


\section{Soergel theory}\label{sec-ST}
We now bring the representation theory and the moment graph theory together using Soergel's {\em Strukturfunktor} (cf.\  \cite{SJAMS}). We prove versions of the Struktursatz and the Endomorphismensatz of loc.\ cit. Our framework is the following. As before we fix a $\sim_{{\widetilde S}}$-equivalence class $\Lambda$. 
Let ${\mathcal J}$ be an open and locally bounded subset of $\Lambda$ and $\lambda\in{\mathcal J}$. We set ${\mathcal K}:={\mathcal J}\cap\{\ge\lambda\}$. Then ${\mathcal K}$ is locally closed and locally bounded. From now on we assume that the pair $({\mathcal J},\lambda)$ satisfies the following two assumptions:
\begin{itemize}
\item The  object ${\overline{P}}_{{\widetilde S}}^{\mathcal J}(\lambda)$ is multiplicity free, i.e.\  $({\overline{P}}_{{\widetilde S}}^{\mathcal J}(\lambda):{\overline{\Delta}} _{{\widetilde S}}(\mu))\le 1$ for all $\mu\in{\mathcal J}$.
\item We have $ \operatorname{supp}_{{\overline{\Delta}} }\, {\overline{P}}_{{\widetilde S}}^{\mathcal J}(\lambda)={\mathcal K}$.
\item  If  $\mu\in {\mathcal K}$, then   $\mu+n\delta\not\in{\mathcal K}$ for all $n\ne 0$.
\end{itemize}
From now on we fix the data above.

\subsection{The Endomorphismensatz}
Recall that we assume that ${\overline{P}}_{{\widetilde S}}^{\mathcal J}(\lambda)$ is multiplicity free. This implies that ${\overline{P}}_{{\widetilde S}}^{\mathcal J}(\lambda)\otimes_{{\widetilde S}} Q$ splits into a direct sum of non-isomorphic Verma modules. As there are no non-trivial homomorphisms ${\overline{\Delta}} _Q(\mu)\to{\overline{\Delta}} _Q(\mu^\prime)$ for $\mu\ne\mu^\prime$, we get a canonical decomposition
$$
\operatorname{End}_{{\overline{\mathcal O}} _Q}({\overline{P}}_{{\widetilde S}}^{\mathcal J}(\lambda)\otimes_{{\widetilde S}} Q)=\bigoplus_{\mu\in {{\mathcal K}}} Q.
$$
As ${\overline{P}}_{{\widetilde S}}^{\mathcal J}(\lambda)$ admits a Verma flag, it is free as an ${\widetilde S}$-module. Hence $\operatorname{End}_{{\overline{\mathcal O}} _{{\widetilde S}}}({\overline{P}}_{{\widetilde S}}^{{\mathcal J}}(\lambda))$ is torsion free as an ${\widetilde S}$-module. Then the canonical homomorphism $\operatorname{End}_{{\overline{\mathcal O}} _{{\widetilde S}}}({\overline{P}}_{{\widetilde S}}^{{\mathcal J}}(\lambda))\to \operatorname{End}_{{\overline{\mathcal O}} _{{\widetilde S}}}({\overline{P}}_{{\widetilde S}}^{{\mathcal J}}(\lambda))\otimes_{{\widetilde S}} Q$ is injective. But by Theorem \ref{thm-propproj} the latter $Q$-module is canonically isomorphic to $\operatorname{End}_{{\overline{\mathcal O}} _Q}({\overline{P}}_{{\widetilde S}}^{\mathcal J}(\lambda)\otimes_{{\widetilde S}} Q)$, so we obtain a canonical inclusion
$$
\operatorname{End}_{{\overline{\mathcal O}} _{{\widetilde S}}}({\overline{P}}_{{\widetilde S}}^{{\mathcal J}}(\lambda))\to\bigoplus_{\mu\in {\mathcal K}} Q.
$$


\begin{Fiebigtheorem}  The image of the inclusion above is ${\mathcal{Z}}_{{\widetilde S}}({\mathcal K})$, hence we obtain a canonical isomorphism 
$$
\operatorname{End}_{{\overline{\mathcal O}} _{{\widetilde S}}}({\overline{P}}_{{\widetilde S}}^{\mathcal J}(\lambda))\cong {\mathcal{Z}}_{{\widetilde S}}({{\mathcal K}}).
$$
of ${\widetilde S}$-algebras. 
\end{Fiebigtheorem}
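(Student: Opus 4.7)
The plan is to apply Lemma \ref{lemma-bcone} to reduce the statement to a local calculation at each height-one prime ${\mathfrak p}\in{\mathfrak P}$. Since ${\overline{P}}_{{\widetilde S}}^{{\mathcal J}}(\lambda)$ admits a Verma flag it is free as an ${\widetilde S}$-module and hence reflexive, so Lemma \ref{lemma-bcone} will identify $\operatorname{End}_{{\overline{\mathcal O}} _{{\widetilde S}}}({\overline{P}}_{{\widetilde S}}^{{\mathcal J}}(\lambda))$ with the intersection $\bigcap_{{\mathfrak p}\in{\mathfrak P}}\operatorname{End}_{{\overline{\mathcal O}} _{{\mathfrak p}}}({\overline{P}}_{{\widetilde S}}^{{\mathcal J}}(\lambda)_{{\mathfrak p}})$ inside $\bigoplus_{\mu\in{\mathcal K}}Q$. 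The structure algebra ${\mathcal{Z}}_{{\widetilde S}}({\mathcal K})$ is also reflexive (by direct inspection of its defining congruences), so it too is the intersection of its localizations at primes of ${\mathfrak P}$, and the theorem reduces to matching the two images at each ${\mathfrak p}$.

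First I would dispose of the ``generic'' case where ${\mathfrak p}$ is not of the form $\alpha^\vee{\widetilde S}$ for any $\alpha\in R$. Each indecomposable summand of ${\overline{P}}_{{\widetilde S}}^{{\mathcal J}}(\lambda)_{{\mathfrak p}}$ has the form ${\overline{P}}^{{\mathcal J}}_{{\mathfrak p}}(\mu)$ by Theorem \ref{thm-propproj}(2), and Proposition \ref{prop-strucproj}(1) says each such summand is a Verma module; combined with multiplicity-freeness this gives ${\overline{P}}_{{\widetilde S}}^{{\mathcal J}}(\lambda)_{{\mathfrak p}}\cong\bigoplus_{\mu\in{\mathcal K}}{\overline{\Delta}}_{{\mathfrak p}}(\mu)$ with endomorphism ring the full $\bigoplus_{\mu\in{\mathcal K}}{\widetilde S}_{{\mathfrak p}}$. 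This matches ${\mathcal{Z}}_{{\widetilde S}}({\mathcal K})\otimes_{{\widetilde S}}{\widetilde S}_{{\mathfrak p}}$ because every edge label $\alpha^\vee$ of ${\mathcal{G}}_{{\mathcal K}}$ becomes a unit in ${\widetilde S}_{{\mathfrak p}}$ and the corresponding congruence turns vacuous.

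The heart of the argument is the ``subgeneric'' case ${\mathfrak p}=\alpha^\vee{\widetilde S}$. I claim that ${\mathcal K}$ partitions into disjoint subgeneric pairs $\{\mu,\alpha\uparrow\mu\}$ with both elements in ${\mathcal K}$ and singletons $\mu$ covered by Proposition \ref{prop-strucproj}(2), so that Proposition \ref{prop-strucproj}(2),(3) gives a decomposition
$$
{\overline{P}}_{{\widetilde S}}^{{\mathcal J}}(\lambda)_{{\mathfrak p}}\cong\bigoplus_{\text{pairs}}{\overline{P}}^{{\mathcal J}}_{{\mathfrak p}}(\mu)\oplus\bigoplus_{\text{singletons}}{\overline{\Delta}}_{{\mathfrak p}}(\mu).
$$
Since the pairs are disjoint, there are no nontrivial generic homomorphisms between distinct summands, so the endomorphism ring splits block-diagonally. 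Proposition \ref{prop-subgenend} then identifies each two-step block as the pairs $(z_\mu,z_{\alpha\uparrow\mu})$ satisfying $z_\mu\equiv z_{\alpha\uparrow\mu}\mod\alpha^\vee$, while the singleton blocks contribute full copies of ${\widetilde S}_{{\mathfrak p}}$. Since the hypothesis $\mu+n\delta\not\in{\mathcal K}$ for $n\neq 0$ provides a bijection between these subgeneric pairs and the $\alpha^\vee$-labeled edges of ${\mathcal{G}}_{{\mathcal K}}$, this description coincides with ${\mathcal{Z}}_{{\widetilde S}}({\mathcal K})\otimes_{{\widetilde S}}{\widetilde S}_{{\mathfrak p}}$.

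The main obstacle will be justifying the partition itself, and this is where multiplicity-freeness enters decisively: if some $\mu\in{\mathcal K}$ were simultaneously the top of an up-pairing ($\mu=\alpha\uparrow\nu$ with $\nu\in{\mathcal K}$) and the bottom of another ($\alpha\uparrow\mu\in{\mathcal K}$), then both ${\overline{P}}^{{\mathcal J}}_{{\mathfrak p}}(\nu)$ and ${\overline{P}}^{{\mathcal J}}_{{\mathfrak p}}(\mu)$ would appear as summands and force $({\overline{P}}_{{\widetilde S}}^{{\mathcal J}}(\lambda):{\overline{\Delta}}_{{\widetilde S}}(\mu))\geq 2$, contradicting the hypothesis. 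Together with the $\delta$-string condition on ${\mathcal K}$, this rules out all alternatives to the stated partition, and intersecting the local descriptions over ${\mathfrak p}\in{\mathfrak P}$ completes the identification with ${\mathcal{Z}}_{{\widetilde S}}({\mathcal K})$.
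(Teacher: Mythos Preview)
Your proof is correct and follows essentially the same route as the paper: reduce via Lemma~\ref{lemma-bcone} to height-one primes, use Proposition~\ref{prop-strucproj} to decompose the localized projective into Verma and two-step summands, and invoke Proposition~\ref{prop-subgenend} for the subgeneric endomorphism rings. You supply more detail than the paper does---in particular the explicit matching of subgeneric pairs with $\alpha^\vee$-labelled edges of ${\mathcal{G}}_{\mathcal K}$---though note that the partition into pairs and singletons is really secured by the $\delta$-string hypothesis alone (any two elements of $\{s_{\alpha+k\delta}.\mu\}$ differ by a multiple of $\delta$), so your multiplicity-freeness contradiction in the final paragraph is not the cleanest way to rule out overlaps.
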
 

\begin{proof}  By Lemma \ref{lemma-bcone} we have 
$$
\operatorname{End}_{{\overline{\mathcal O}} _{{\widetilde S}}}({\overline{P}}^{\mathcal J}_{{\widetilde S}}(\lambda))=\bigcap_{{\mathfrak p}\in{\mathfrak P}}\operatorname{End}_{{\overline{\mathcal O}} _{\mathfrak p}}({\overline{P}}^{\mathcal J}_{{\widetilde S}}(\lambda)\otimes_{{\widetilde S}} {{\widetilde S}}_{\mathfrak p})\subset \operatorname{End}_{{\overline{\mathcal O}} _Q}({\overline{P}}^{\mathcal J}_{{\widetilde S}}(\lambda)\otimes_{{\widetilde S}} Q).
$$
Now ${\overline{P}}^{\mathcal J}_{{\widetilde S}}(\lambda)\otimes_{{\widetilde S}} {{\widetilde S}}_{\mathfrak p}$ is projective in ${\overline{\mathcal O}} _{\mathfrak p}$. If ${\mathfrak p}$ is a prime ideal of height one,  Proposition \ref{prop-strucproj} implies that ${\overline{P}}^{\mathcal J}_{{\widetilde S}}(\lambda)\otimes_{{\widetilde S}} {{\widetilde S}}_{\mathfrak p}$ splits into indecomposable direct summands that are either isomorphic to Verma modules, or admit a two step Verma filtration. We deduce that $\operatorname{End}({\overline{P}}^{\mathcal J}_{{\widetilde S}}(\lambda)\otimes_{{\widetilde S}} {{\widetilde S}}_{\mathfrak p})\subset\bigoplus {{\widetilde S}}_{\mathfrak p}$, and for $(z_\mu)\in\operatorname{End}({\overline{P}}^{\mathcal J}_{{\widetilde S}}(\lambda)\otimes_{{\widetilde S}} {{\widetilde S}}_{\mathfrak p})$  we have $z_\mu\cong z_{\alpha\uparrow\mu}\mod\alpha^\vee$ in case the direct summand ${\overline{P}}^{\mathcal J}_{\mathfrak p}(\mu)$ with a two step Verma filtration occurs (by Proposition \ref{prop-subgenend}). Now taking the intersection over all ${\mathfrak p}$ of height one yields the claim. 
\end{proof}

\subsection{The Strukturfunktor}

Let $\Lambda$, ${\mathcal J}$ and $\lambda$  be as above.  Using the Endomorphismensatz we  can identify the category ${\operatorname{mod-}}\operatorname{End}_{{\overline{\mathcal O}} _{{\widetilde S},\Lambda}}({\overline{P}}^{\mathcal J}_{{\widetilde S}}(\lambda))$ with ${\mathcal{Z}}_{\widetilde S}({{\mathcal K}}){\operatorname{-mod}}$.

\begin{Fiebigdefinition} The {\em Strukturfunktor} associated with the above data is defined as 
$$
{\mathbb V}:=\operatorname{Hom}_{{\overline{\mathcal O}} _{{\widetilde S},\Lambda}}({\overline{P}}^{\mathcal J}_{{\widetilde S}}(\lambda),\cdot)\colon{\overline{\mathcal O}} ^{\mathcal J}_{{\widetilde S},\Lambda}\to{\mathcal{Z}}_{\widetilde S}({{\mathcal K}}){\operatorname{-mod}}.
$$
\end{Fiebigdefinition}
We collect some properties of ${\mathbb V}$.
\begin{Fiebigproposition}\label{prop-propV}  
\begin{enumerate}
\item If $M\in{\overline{\mathcal O}}^{\mathcal J}_{{\widetilde S},\Lambda}$ admits a Verma flag, then so does  ${\mathbb V} M$. Hence we obtain a functor
$$
{\mathbb V}\colon {\overline{\mathcal O}} ^{\mathcal J,V}_{{\widetilde S},\Lambda}\to{\mathcal C}_{\widetilde S}({{\mathcal K}}).
$$
\item The functor ${\mathbb V}$ is exact.
\item For $\mu\in{{\mathcal K}}$ we have ${\mathbb V}({\overline{\Delta}} _{{\widetilde S}}(\mu))\cong{\mathcal V}_{{\widetilde S}}(\mu)$.
\end{enumerate}
\end{Fiebigproposition}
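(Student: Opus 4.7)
The plan is to first handle part (3) by explicit base change, then derive parts (1) and (2) together by induction on the length of the Verma flag, using the projectivity of ${\overline{P}}^{\mathcal J}_{{\widetilde S}}(\lambda)$.

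For part (3), I would apply Lemma \ref{lemma-bcone} to identify $\operatorname{Hom}_{{\overline{\mathcal O}} _{{\widetilde S}}}({\overline{P}}^{\mathcal J}_{{\widetilde S}}(\lambda),{\overline{\Delta}} _{{\widetilde S}}(\mu))$ with the intersection $\bigcap_{{\mathfrak p}\in{\mathfrak P}}\operatorname{Hom}_{{\overline{\mathcal O}} _{\mathfrak p}}({\overline{P}}^{\mathcal J}_{\mathfrak p}(\lambda),{\overline{\Delta}} _{\mathfrak p}(\mu))$ inside $\operatorname{Hom}_Q({\overline{P}}^{\mathcal J}(\lambda)\otimes_{{\widetilde S}} Q,{\overline{\Delta}} _Q(\mu))$. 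The multiplicity-free hypothesis yields ${\overline{P}}^{\mathcal J}(\lambda)\otimes_{{\widetilde S}} Q\cong\bigoplus_{\nu\in{\mathcal K}}{\overline{\Delta}} _Q(\nu)$, and since distinct Verma modules over $Q$ admit no non-zero homomorphisms, this generic Hom-space equals $Q\cdot\pi_\mu$, where $\pi_\mu$ projects onto the $\mu$-summand. For each height-one prime ${\mathfrak p}$, Proposition \ref{prop-strucproj} describes the local projective as a sum of indecomposables, each either a Verma module or a two-step projective; a local analysis analogous to the Endomorphismensatz (invoking Proposition \ref{prop-subgenend} in the two-step case) identifies the local Hom-space with ${\widetilde S}_{\mathfrak p}\cdot\pi_\mu$. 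Intersecting gives ${\widetilde S}\cdot\pi_\mu$, free of rank one. For the ${\mathcal{Z}}_{{\widetilde S}}({\mathcal K})$-action, an endomorphism $(z_\nu)\in\operatorname{End}({\overline{P}})$ scales the $\nu$-summand of ${\overline{P}}_Q$ by $z_\nu$, hence $\pi_\mu\circ(z_\nu)=z_\mu\pi_\mu$, which is precisely the defining action on ${\mathcal V}_{{\widetilde S}}(\mu)$.

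For parts (1) and (2) I would argue simultaneously by induction on the length of the Verma flag of $M$. Projectivity of ${\overline{P}}^{\mathcal J}_{{\widetilde S}}(\lambda)$ in ${\overline{\mathcal O}} ^{{\mathcal J}}_{{\widetilde S},\Lambda}$ makes $\operatorname{Hom}({\overline{P}},-)$ exact as a functor valued in ${\widetilde S}$-modules. The base case is part (3), and ${\mathcal V}_{{\widetilde S}}(\mu)$ manifestly admits a Verma flag in the sense of Section \ref{sec-defC}, since each of its truncations is either zero or itself. For the inductive step, given a short exact sequence $0\to M'\to M\to{\overline{\Delta}} _{{\widetilde S}}(\nu)\to 0$ with $M'$ of shorter Verma flag, applying ${\mathbb V}$ yields a short exact sequence $0\to{\mathbb V} M'\to{\mathbb V} M\to{\mathcal V}_{{\widetilde S}}(\nu)\to 0$ of ${\mathcal{Z}}_{{\widetilde S}}({\mathcal K})$-modules. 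By induction, ${\mathbb V} M'$ admits a Verma flag, and I would verify that each truncation $({\mathbb V} M)^{{\mathcal J}'}$ with ${\mathcal J}'\subseteq{\mathcal K}$ open is free over ${\widetilde S}$. The key point is that for torsion-free ${\mathcal{Z}}_{{\widetilde S}}({\mathcal K})$-modules the canonical decomposition $N_Q=\bigoplus N_Q^\mu$ is compatible with sub-objects, so $N'\cap N_{{\mathcal I}}=N'_{{\mathcal I}}$ for any inclusion $N'\hookrightarrow N$; this forces the truncation of the short exact sequence to remain exact, and since ${\mathcal V}_{{\widetilde S}}(\nu)^{{\mathcal J}'}$ is zero or free of rank one, $({\mathbb V} M)^{{\mathcal J}'}$ is an extension of free ${\widetilde S}$-modules, hence free. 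This simultaneously establishes part (1) and the exact-structure version of part (2).

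The principal obstacle will be the local Hom computation at subgeneric primes in part (3): one has to track precisely how each two-step summand ${\overline{P}}^{\mathcal J}_{\mathfrak p}(\eta)$ of the decomposition contributes to the $\mu$-coordinate and confirm that the local contribution is exactly ${\widetilde S}_{\mathfrak p}$. A secondary technical matter is the exactness of the truncation functor $(\cdot)^{{\mathcal J}'}$ on the short exact sequences coming from ${\mathbb V}$, which — as indicated above — rests on the good behaviour of the generic decomposition of ${\mathcal{Z}}_Q({\mathcal K})$-modules under injections of torsion-free ${\mathcal{Z}}_{{\widetilde S}}({\mathcal K})$-modules.
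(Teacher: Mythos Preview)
Your treatment of parts (1) and (2) is correct and close in spirit to the paper's: the paper argues via the identity ${\mathbb V}(M^{{\mathcal J}'})=({\mathbb V} M)^{{\mathcal J}'}$, derived from (3) together with abelian exactness of ${\mathbb V}$, rather than by induction on flag length, but your ``truncation preserves short exact sequences of torsion-free ${\mathcal Z}$-modules'' lemma is the same content viewed from the other side.

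For part (3), however, your subgeneric computation contains a genuine error. Your claim that the local Hom at every height-one prime ${\mathfrak p}$ equals ${\widetilde S}_{\mathfrak p}\cdot\pi_\mu$ fails when ${\mathfrak p}=\alpha^\vee{\widetilde S}$ and $\mu$ occurs as the \emph{top} Verma subquotient of a two-step summand ${\overline{P}}^{\mathcal J}_{\mathfrak p}(\alpha{\downarrow}\mu)$ of ${\overline{P}}^{\mathcal J}_{{\widetilde S}}(\lambda)\otimes_{{\widetilde S}}{\widetilde S}_{\mathfrak p}$. If $\pi_\mu$ restricted to an ${\widetilde S}_{\mathfrak p}$-morphism ${\overline{P}}^{\mathcal J}_{\mathfrak p}(\alpha{\downarrow}\mu)\to{\overline{\Delta}}_{\mathfrak p}(\mu)$, composing with the inclusion ${\overline{\Delta}}_{\mathfrak p}(\mu)\hookrightarrow{\overline{P}}^{\mathcal J}_{\mathfrak p}(\alpha{\downarrow}\mu)$ would produce a non-trivial idempotent endomorphism, contradicting indecomposability. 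What Proposition~\ref{prop-subgenend} actually gives in this situation is that the local Hom equals $\alpha^\vee{\widetilde S}_{\mathfrak p}\cdot\pi_\mu$. Your conclusion survives --- the intersection over all height-one primes is still a nonzero reflexive rank-one ${\widetilde S}$-module, hence free because ${\widetilde S}$ is a regular local ring --- but the argument needs this repair, and the extra step is precisely the ``principal obstacle'' you anticipated.

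The paper bypasses all of this. It reads off the ${\mathcal Z}$-action from the generic decomposition as you do, but for freeness of rank one it base-changes to ${\mathbb C}$ rather than localizing: since ${\overline{P}}^{\mathcal J}_{{\widetilde S}}(\lambda)\otimes_{{\widetilde S}}{\mathbb C}$ is a projective cover of $L_{\mathbb C}(\lambda)$, one gets $\dim_{\mathbb C}\bigl({\mathbb V}({\overline{\Delta}}_{\widetilde S}(\mu))\otimes_{\widetilde S}{\mathbb C}\bigr)=[{\overline{\Delta}}_{\mathbb C}(\mu):L_{\mathbb C}(\lambda)]$, which by BGG reciprocity (Theorem~\ref{thm-propproj}(4)) and the multiplicity-free hypothesis equals $1$; Nakayama plus torsion-freeness then finish. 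This is markedly shorter and avoids the subgeneric case analysis altogether.
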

\begin{proof} 
Let us prove statement (3). We have ${\overline{P}}_{{\widetilde S}}^{\mathcal J}(\lambda)\otimes_{{\widetilde S}} Q\cong\bigoplus_{\mu\in{{\mathcal K}}}{\overline{\Delta}} _Q(\mu)$, and an element  $z=(z_\mu)\in{\mathcal{Z}}$ acts diagonally by multiplication with the coordinates in $S$. Using the base change result in Theorem \ref{thm-propproj} we deduce  that $z$ acts on ${\mathbb V}({\overline{\Delta}} _{{\widetilde S}}(\mu))$ as multiplication with $z_\mu$. So we only have to show that ${\mathbb V}(\Delta_{{\widetilde S}}(\mu))$ is free of rank one as a ${\widetilde S}$-module. As ${\widehat{P}} _{\mathbb C}^{\mathcal J}(\lambda)$ is a projective cover of $L_{\mathbb C}(\lambda)$, we have 
\begin{align*}
\dim_{\mathbb C} \mathbb V(\overline\Delta_{\widetilde S}(\mu))\otimes_{\widetilde S}\mathbb C&=\dim_{\mathbb C} \operatorname{Hom}_{{\overline{\mathcal O}} _{{\widetilde S},\Lambda}}({\overline{P}}^{\mathcal J}_{{\widetilde S}}(\lambda),\overline\Delta_{\widetilde S}(\mu))\otimes_{\widetilde S}\mathbb C\\
&=\dim_{\mathbb C} \operatorname{Hom}_{{\overline{\mathcal O}} _{\mathbb C}^{\mathcal J}}(\overline{P}_{\widetilde S}^{\mathcal J}(\lambda)\otimes_{\widetilde S}\mathbb C,{\overline{\Delta}} _{\mathbb C}(\mu))\\
&=[{\overline{\Delta}} _{\mathbb C}(\mu), L_{\mathbb C}(\lambda)].
\end{align*}

 By Theorem \ref{thm-propproj} we have $[{\overline{\Delta}} _{\mathbb C}(\mu), L_{\mathbb C}(\lambda)]=({\overline{P}}_{\mathbb C}^{\mathcal J}(\lambda):{\overline{\Delta}} _{\mathbb C}(\mu))$. By assumption, the latter number is $1$. So by  Nakayama's lemma we deduce that ${\mathbb V}{\overline{\Delta}} _{{\widetilde S}}(\mu))$ is cyclic as an ${\widetilde S}$-module. As ${\overline{\Delta}} _{{\widetilde S}}(\mu)$ is torsion free (as an ${\widetilde S}$-module), so is ${\mathbb V}({\overline{\Delta}} _{{\widetilde S}}(\mu))$. Hence ${\mathbb V}({\overline{\Delta}} _{{\widetilde S}}(\mu))$ is free of rank $1$.

We prove statement (1). If $M$ is an object in ${\overline{\mathcal O}} _{{\widetilde S},\Lambda}^{V}$, then its quotient $M^{\mathcal J}$ and the submodule $M_{\mathcal I}$ admit Verma flags as well. As ${\overline{P}}_{{\widetilde S}}^{\mathcal J}(\lambda)$ is projective in ${\overline{\mathcal O}} _{{\widetilde S},\Lambda}^{{\mathcal J}}$, the functor ${\mathbb V}$ is exact when we consider the natural abelian structure on ${\mathcal{Z}}_{{\widetilde S}}({{\mathcal K}}){\operatorname{-mod}}$. Using (3) we deduce that ${\mathbb V}(M_{\mathcal I})={\mathbb V}(M)_{{\mathcal I}}$ and ${\mathbb V}(M^{\mathcal J})={\mathbb V}(M)^{{\mathcal J}}$. Moreover, both objects are free as ${\widetilde S}$-modules. Hence ${\mathbb V}(M)$ admits a Verma flag. 

We are left with proving statement (2). 
Suppose that $0\to M_1\to M_2\to M_3\to 0$ is a short exact sequence in ${\overline{\mathcal O}} _{{\widetilde S},\Lambda}^{{\mathcal J},V}$. Then  for each open subset ${\mathcal J}^\prime$ the sequence
$$
0\to M_1^{{\mathcal J}^\prime}\to M_2^{{\mathcal J}^\prime}\to M_3^{{\mathcal J}^\prime}\to0
$$
 is exact as well. By the projectivity of ${\overline{P}}_{{\widetilde S}}^{\mathcal J}(\lambda)$, the sequence
 $$
 0\to{\mathbb V}(M_1^{{\mathcal J}^\prime})\to {\mathbb V}(M_2^{{\mathcal J}^\prime})\to {\mathbb V}(M_3^{{\mathcal J}^\prime})\to 0
 $$
 is exact as a sequence of ${\mathcal{Z}}_{{\widetilde S}}(\mathcal K)$-modules (with the standard exact structure). By what we established above, this sequence  identifies with the sequence 
 $$
 0\to({\mathbb V} M_1)^{{\mathcal J}^\prime}\to ({\mathbb V} M_2){{\mathcal J}^\prime}\to ({\mathbb V} M_3)^{{\mathcal J}^\prime}\to 0.
 $$
 Hence ${\mathbb V}$ is an exact functor. 
\end{proof}

\subsection{The Struktursatz}
We now show that the Strukturfunktor is fully faithful on the relevant objects. 
\begin{Fiebigtheorem}  \label{thm-ff} Let $\mu\in{{\mathcal K}}$. Then the functorial homomorphism
$$
\operatorname{Hom}_{{\overline{\mathcal O}} _{{\widetilde S},\Lambda}}({\overline{P}}^{\mathcal J}_{{\widetilde S}}(\mu),M)\to\operatorname{Hom}_{{\mathcal C}_{{\widetilde S}}({{\mathcal K}})}({\mathbb V} {\overline{P}}^{\mathcal J}_{{\widetilde S}}(\mu),{\mathbb V} M)
$$
is an isomorphism for any object $M$ of  ${\overline{\mathcal O}}^{{\mathcal J},V}_{{\widetilde S},\Lambda}$.  
\end{Fiebigtheorem}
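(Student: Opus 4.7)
The plan is to reduce the statement, via Lemma \ref{lemma-bcone}, to a check at each height-one prime $\mathfrak p$ of $\widetilde S$, and then at each such $\mathfrak p$ to exploit the explicit description of $\overline P^{\mathcal J}_{\mathfrak p}(\mu)$ provided by Proposition \ref{prop-strucproj}. Since $M$ admits a Verma flag it is free over $\widetilde S$, and Theorem \ref{thm-propproj}(3) implies that the source Hom is torsion-free over $\widetilde S$ and commutes with base change to any localization. The target Hom consists of $\mathcal Z_{\widetilde S}(\mathcal K)$-module homomorphisms between two finitely generated torsion-free $\widetilde S$-modules, so it too is torsion-free and behaves well under localization. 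Both spaces inject canonically into
$$
\operatorname{Hom}_{\overline{\mathcal O}_Q}\bigl(\overline P^{\mathcal J}_{\widetilde S}(\mu)\otimes_{\widetilde S}Q,\,M\otimes_{\widetilde S}Q\bigr),
$$
under an identification that makes the map of the statement the identity; hence by Lemma \ref{lemma-bcone} it suffices to show that after localizing at each $\mathfrak p\in\mathfrak P$ the two subspaces agree.

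For a height-one prime $\mathfrak p$ that contains no coroot, Proposition \ref{prop-strucproj}(1) gives $\overline P^{\mathcal J}_{\mathfrak p}(\mu)=\overline{\Delta}_{\mathfrak p}(\mu)$ and the Verma flag of $M_{\mathfrak p}$ splits into a direct sum of Vermas, so both Hom spaces are free over $\widetilde S_{\mathfrak p}$ of the same rank, with the identification furnished by Proposition \ref{prop-propV}(3). For $\mathfrak p=\alpha^\vee\widetilde S$ with $\alpha\in R$ either the same argument applies, or Proposition \ref{prop-strucproj}(3) provides the non-split short exact sequence
$$
0\to\overline{\Delta}_{\mathfrak p}(\alpha\uparrow\mu)\to\overline P^{\mathcal J}_{\mathfrak p}(\mu)\to\overline{\Delta}_{\mathfrak p}(\mu)\to 0,
$$
which by exactness of $\mathbb V$ and Proposition \ref{prop-propV}(3) is sent to the analogous short exact sequence
$$
0\to\mathcal V_{\mathfrak p}(\alpha\uparrow\mu)\to\mathbb V\overline P^{\mathcal J}_{\mathfrak p}(\mu)\to\mathcal V_{\mathfrak p}(\mu)\to 0
$$
in $\mathcal C_{\widetilde S_{\mathfrak p}}(\mathcal K)$. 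Applying the respective Hom functors to the Verma flag of $M_{\mathfrak p}$, and using projectivity of $\overline P^{\mathcal J}_{\mathfrak p}(\mu)$ in $\overline{\mathcal O}^{\mathcal J}_{\mathfrak p}$ on one side together with projectivity of $\mathbb V\overline P^{\mathcal J}_{\mathfrak p}(\mu)$ in $\mathcal C_{\widetilde S_{\mathfrak p}}(\mathcal K)$ (to be extracted from Proposition \ref{prop-projinC}) on the other, a five-lemma argument reduces the problem to the single case $M_{\mathfrak p}=\overline{\Delta}_{\mathfrak p}(\nu)$ for each $\nu$ occurring in the flag.

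For this final case both Hom spaces are either zero or free of rank one over $\widetilde S_{\mathfrak p}$, the non-zero instances being exactly $\nu\in\{\mu,\alpha\uparrow\mu\}$: on the representation side this follows from direct weight-space analysis together with Proposition \ref{prop-subgenend}, while on the structure-algebra side it follows from the identification of $\mathbb V\overline P^{\mathcal J}_{\mathfrak p}(\mu)$ with $\{(a,b)\in\widetilde S_{\mathfrak p}\oplus\widetilde S_{\mathfrak p}\mid a\equiv b\pmod{\alpha^\vee}\}$ together with the definition of $\mathcal V_{\mathfrak p}(\nu)$. Injectivity of $\mathbb V$ on these Hom spaces is inherited from the generic decomposition over $Q$, so matching ranks forces an isomorphism. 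The step I expect to be the main obstacle is precisely this last matching, namely the verification that $\mathbb V\overline P^{\mathcal J}_{\mathfrak p}(\mu)$ is projective in $\mathcal C_{\widetilde S_{\mathfrak p}}(\mathcal K)$ with exactly the right image of the subgeneric extension class; this is the functorial generalization of the Endomorphismensatz (the case $\mu=\lambda$), and should be handled by the same Jantzen-sum-formula input packaged in Proposition \ref{prop-subgenend}, applied now to $\overline P^{\mathcal J}_{\mathfrak p}(\mu)$ in place of $\overline P^{\mathcal J}_{\mathfrak p}(\lambda)$.
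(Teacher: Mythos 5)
Your reduction to height-one primes via Lemma \ref{lemma-bcone}, together with the base-change compatibility of both $\operatorname{Hom}$-spaces and their common injection into the generic fiber, agrees with the paper's first step. The divergence is in how you handle a subgeneric prime $\mathfrak p=\alpha^\vee\widetilde S$, and your route has two gaps that the paper's argument is specifically designed to avoid.

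First, the final rank-matching step does not close. If $f\colon A\to B$ is an injective map of free $\widetilde S_{\mathfrak p}$-modules of the same finite rank that becomes an isomorphism over $Q$, then $f$ need not be an isomorphism: $\widetilde S_{\mathfrak p}$ is a discrete valuation ring and $f$ could be, e.g., multiplication by the uniformizer. One must additionally show that the canonical generators correspond under the natural map, which is exactly what the paper supplies in the generic case by noting that both endomorphism rings are ``free of rank one generated by the respective identities.'' Second, your five-lemma reduction requires the projectivity of $\mathbb V\overline P^{\mathcal J}_{\mathfrak p}(\mu)$ in the exact category $\mathcal C_{\widetilde S_{\mathfrak p}}(\mathcal K)$, which you flag as the main obstacle but do not establish (and which the paper's global version, Proposition \ref{prop-VPproj}, only proves after the Struktursatz).

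The paper sidesteps both difficulties. Rather than analysing $\overline P^{\mathcal J}_{\mathfrak p}(\mu)$ and filtering $M_{\mathfrak p}$ by Vermas, it decomposes $\overline P^{\mathcal J}_{\widetilde S}(\lambda)\otimes_{\widetilde S}\widetilde S_{\mathfrak p}$ (the projective defining $\mathbb V$) into $\overline{\Delta}_{\mathfrak p}$'s and subgeneric two-step projectives indexed by $\alpha$-orbits, and correspondingly decomposes $M_{\mathfrak p}$ into $\alpha$-blocks $(M_{\mathfrak p})_{[\mu]}$ and $(M_{\mathfrak p})_{[\mu,\alpha\uparrow\mu]}$. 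For a two-element block, Proposition \ref{prop-subgenend} identifies $\mathbb V_{\mathfrak p}\overline P_{\mathfrak p}(\mu)$ with $\mathcal Z_{\mathfrak p}(\{\mu,\alpha\uparrow\mu\})$ as the free rank-one module over the local structure algebra, so
$$
\operatorname{Hom}_{\mathcal Z_{\mathfrak p}(\mathcal K)}(\mathbb V_{\mathfrak p}\overline P_{\mathfrak p}(\mu),\mathbb V_{\mathfrak p}N)=\mathbb V_{\mathfrak p}N=\operatorname{Hom}_{\overline{\mathcal O}_{\mathfrak p}}(\overline P_{\mathfrak p}(\mu),N)
$$
on the nose, with no five-lemma, no auxiliary projectivity in $\mathcal C_{\widetilde S_{\mathfrak p}}(\mathcal K)$, and no rank comparison. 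Your instinct that the needed input is ``the functorial generalization of the Endomorphismensatz'' is exactly right; the paper packages it as the block decomposition of the localized $\overline P^{\mathcal J}_{\widetilde S}(\lambda)$ rather than as a projectivity statement for $\mathbb V\overline P^{\mathcal J}_{\mathfrak p}(\mu)$.
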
  
\begin{proof}  For notational simplicity we now write $P$ instead of  ${\overline{P}}^{\mathcal J}_{{\widetilde S}}(\mu)$. By Lemma \ref{lemma-bcone} we have 
$$
\operatorname{Hom}_{{\overline{\mathcal O}} _{{\widetilde S}}}(P,M)=\bigcap_{{\mathfrak p}\in{\mathfrak P}}  \operatorname{Hom}_{{\overline{\mathcal O}} _{\mathfrak p}}(P_{\mathfrak p},M_{\mathfrak p}).
$$
Analogously (and with the same proof), we have 
$$
\operatorname{Hom}_{{\mathcal{Z}}_{{\widetilde S}}({\mathcal K})}({\mathbb V} P,{\mathbb V} M)=\bigcap_{{\mathfrak p}\in{\mathfrak P}}  \operatorname{Hom}_{{\mathcal{Z}}_{\mathfrak p}({\mathcal K})}(({\mathbb V} P)_{\mathfrak p},({\mathbb V} M)_{\mathfrak p}).
$$
By the base change result in Proposition \ref{prop-propV} we have 
$$
({\mathbb V} P)_{\mathfrak p}=\operatorname{Hom}_{\overline{\mathcal O}_{\mathfrak p}} ({\overline{P}}^{\mathcal J}_{{\widetilde S}}(\lambda)\otimes_{\widetilde S}\widetilde S_{\mathfrak p},P_{\mathfrak p})
$$ 
and 
$$
({\mathbb V} M)_{\mathfrak p}=\operatorname{Hom}_{\overline{\mathcal O}_{\mathfrak p}} ({\overline{P}}^{\mathcal J}_{{\widetilde S}}(\lambda)\otimes_{\widetilde S}\widetilde S_{\mathfrak p},,M_{\mathfrak p}).
$$
We denote these spaces by ${\mathbb V}_{\mathfrak p} P_{\mathfrak p}$ and ${\mathbb V}_{\mathfrak p} M_{\mathfrak p}$, resp.
It is hence enough to show that 
$$
\operatorname{Hom}_{{\overline{\mathcal O}} _{\mathfrak p}}(P_{\mathfrak p},M_{\mathfrak p})\to\operatorname{Hom}_{{\mathcal{Z}}_{\mathfrak p}({{\mathcal K}})}({\mathbb V}_{\mathfrak p} P_{\mathfrak p},{\mathbb V}_{\mathfrak p} M_{\mathfrak p})
$$
 is an isomorphism for any prime ideal ${\mathfrak p}$ of height one.

If $\alpha^\vee\not\in{\mathfrak p}$ for all $\alpha\in R$, then 
$$
{\mathbb V}_{\mathfrak p}\cdot=\operatorname{Hom}_{{\overline{\mathcal O}} _{\mathfrak p}}({\overline{P}}^{{\mathcal J}}_{{\widetilde S}}(\lambda)\otimes_{\widetilde S} {{\widetilde S}}_{\mathfrak p},\cdot)\cong\bigoplus_{\mu\in{{\mathcal K}}}\operatorname{Hom}_{{\overline{\mathcal O}} _{\mathfrak p}}({\overline{\Delta}} _{\mathfrak p}(\mu),\cdot)
$$
and $M_{\mathfrak p}=\bigoplus_{\nu\in{{\mathcal K}}} (M_{\mathfrak p})_{[\mu]}$, where each $(M_{\mathfrak p})_{[\mu]}$ is isomorphic to a direct sum of copies of ${\overline{\Delta}} _{\mathfrak p}(\mu)$. As $\operatorname{End}_{{\overline{\mathcal O}} _{\mathfrak p}}({\overline{\Delta}} _{\mathfrak p}(\mu))$ and $\operatorname{End}_{{\mathcal{Z}}_{\mathfrak p}({{\mathcal K}})}({\mathbb V}_{\mathfrak p}{\overline{\Delta}} _{\mathfrak p}(\mu))$ are free over ${{\widetilde S}}_{\mathfrak p}$ of rank one generated by the respective identities, we get the claim.

Suppose now that  ${\mathfrak p}=\alpha^\vee R$, and set $A:=\{\mu\in {\mathcal K}\mid \alpha\uparrow\mu,\alpha\downarrow\mu\not\in{\mathcal K}\}$ and $B:=\{\mu\in{\mathcal K}\mid \alpha\uparrow\mu\in{\mathcal K}\}$. Then we have an isomorphism
$$
{\overline{P}}_{{\widetilde S}}^{\mathcal J}(\lambda)\otimes_{{\widetilde S}} {{\widetilde S}}_{\mathfrak p}\cong\bigoplus_{\mu\in A}{\overline{\Delta}} _{\mathfrak p}(\mu)\oplus\bigoplus_{\mu\in B} {\overline{P}}_{\mathfrak p}(\mu).
$$
and a corresponding decomposition $M_{\mathfrak p}=\bigoplus_{\mu\in A}(M_{\mathfrak p})_{[\mu]}\oplus \bigoplus_{\mu\in B} (M_{\mathfrak p})_{[\mu,\alpha\uparrow\mu]}$. For $\mu\in A$, the object $(M_{\mathfrak p})_{[\mu]}$ is isomorphic to a direct sum of copies of ${\overline{\Delta}} _{\mathfrak p}(\mu)$ and we argue as before. For $\mu\in B$ we have 
\begin{align*}
\operatorname{Hom}_{{\mathcal{Z}}_{\mathfrak p}({\mathcal K})}({\mathbb V}_{\mathfrak p}{\overline{P}}(\mu), {\mathbb V}_{\mathfrak p} N)&=\operatorname{Hom}_{{\mathcal{Z}}_{\mathfrak p}(\{\mu,\alpha\uparrow\mu\})}({\mathcal{Z}}_{\mathfrak p}(\{\mu,\alpha\uparrow\mu\}),{\mathbb V}_{\mathfrak p} N)\\
&={\mathbb V}_{\mathfrak p} N\\
&=\operatorname{Hom}_{{\overline{\mathcal O}} _{\mathfrak p}}(P_{\mathfrak p}(\mu),N).
\end{align*}

\end{proof}

\subsection{The Strukturfunktor on projectives}
Apart from the Endomorphismensatz and the Struktursatz, we need to understand the image of the projectives under ${\mathbb V}$.

\begin{Fiebigproposition}\label{prop-VPproj} For $\mu\in{{\mathcal K}}$ the object ${\mathbb V} {\overline{P}}^{\mathcal J}_{{\widetilde S}}(\mu)$ is projective in ${\mathcal C}_{{\widetilde S}}({{\mathcal K}})$.
\end{Fiebigproposition}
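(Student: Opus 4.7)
The plan is to verify the two conditions of Proposition \ref{prop-projinC} for $M := \mathbb V \overline P^{\mathcal J}_{\widetilde S}(\mu)$. By Proposition \ref{prop-propV}, $M$ admits a Verma flag in $\mathcal C_{\widetilde S}(\mathcal K)$ whose standard subquotients $\mathcal V_{\widetilde S}(\nu)$ are in bijection with the Verma factors of $\overline P^{\mathcal J}_{\widetilde S}(\mu)$.

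For condition (1), I would show $M^\chi$ is free over $\widetilde S$ of rank $m_\chi := (\overline P^{\mathcal J}_{\widetilde S}(\mu) : \overline\Delta_{\widetilde S}(\chi))$. Replacing $M$ with $M^{\{\leq\chi\}\cap\mathcal K}$ (this commutes with $\mathbb V$ as in the proof of Proposition \ref{prop-propV}(1), and leaves $M^\chi$ unchanged), one may assume $\chi$ is maximal in $\operatorname{supp}(M)$. A Verma flag of $M$ can then be arranged so that the $m_\chi$ copies of $\mathcal V_{\widetilde S}(\chi)$ appear as the top subquotients, yielding a short exact sequence $0 \to M' \to M \to N \to 0$ where $N$ is an iterated extension of $\mathcal V_{\widetilde S}(\chi)$'s, free of rank $m_\chi$ over $\widetilde S$ and supported on $\{\chi\}$. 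The natural inclusion $N \hookrightarrow N_Q = M_Q^\chi$ factors $M \to M_Q^\chi$ through $N$, so $M^\chi \cong N$ is free of rank $m_\chi$.

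For condition (2), fix an edge $E\colon \chi \to \chi'$ in $\mathcal G_{\mathcal K}$ with label $\alpha^\vee$ and $\chi < \chi'$, and verify $(M^E)_\chi = \alpha^\vee M^\chi$. Since both sides are reflexive $\widetilde S$-submodules of $M_Q^\chi$, it suffices by an argument analogous to Lemma \ref{lemma-bcone} to check the identity after localization at each height-one prime $\mathfrak p$ of $\widetilde S$. At any $\mathfrak p \neq \alpha^\vee \widetilde S$ the coroot $\alpha^\vee$ is a unit in $\widetilde S_\mathfrak p$, and by Proposition \ref{prop-strucproj}(1)--(2) the indecomposable summands of $\overline P^{\mathcal J}_{\widetilde S}(\mu) \otimes_{\widetilde S} \widetilde S_\mathfrak p$ do not couple the $\chi$- and $\chi'$-components along $\alpha$, so the identity is immediate.

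The essential case is $\mathfrak p = \alpha^\vee \widetilde S$. Proposition \ref{prop-strucproj}(3) decomposes $\overline P^{\mathcal J}_{\widetilde S}(\mu) \otimes_{\widetilde S} \widetilde S_\mathfrak p$ into indecomposable summands of Verma type and two-step projective type $\overline P^{\mathcal J}_\mathfrak p(\eta)$ with exact sequence $0 \to \overline\Delta_\mathfrak p(\alpha\uparrow\eta) \to \overline P^{\mathcal J}_\mathfrak p(\eta) \to \overline\Delta_\mathfrak p(\eta) \to 0$. Applying $\mathbb V_\mathfrak p$ and invoking Proposition \ref{prop-subgenend}, each two-step summand contributes a copy of the local edge structure algebra $\mathcal Z_\mathfrak p(E_\eta) \subset \widetilde S_\mathfrak p^2$ for the edge $E_\eta\colon \eta \to \alpha\uparrow\eta$. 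The only summands coupling the $\chi$- and $\chi'$-components of $M_\mathfrak p$ are the two-step summands with $E_\eta = E$. A direct computation, using the generators $(1,1)$ and $(\alpha^\vee, 0)$ of $\mathcal Z_\mathfrak p(E)$ and the $\mathcal Z_{\widetilde S}(E)$-closure operation defining $M^E$, then yields $(M^E)_{\chi,\mathfrak p} = \alpha^\vee M^\chi_\mathfrak p$. The main obstacle is the local bookkeeping at $\mathfrak p = \alpha^\vee \widetilde S$: one must track carefully how the Krull--Schmidt decomposition at $\mathfrak p$ interacts with $\mathbb V_\mathfrak p$ so that the congruence $z_\chi \equiv z_{\chi'} \pmod{\alpha^\vee}$ enforced by two-step $E$-summands is precisely what appears in $M^E_\mathfrak p$, with no decoupled contributions from other summands overwriting it.
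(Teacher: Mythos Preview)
Your argument for condition~(2) is broadly aligned with the paper's approach (localization at height-one primes, subgeneric structure from Proposition~\ref{prop-strucproj}, and Proposition~\ref{prop-subgenend}), though the paper adds one observation you elide: the standing hypothesis that $\nu\in\mathcal K$ implies $\nu+n\delta\notin\mathcal K$ for $n\ne 0$ forces the $\langle s_{\alpha+m\delta}\mid m\in\mathbb Z\rangle$-orbit through $\chi$ to meet $\mathcal K$ in exactly $\{\chi,\chi'\}$, so that indeed $\chi'=\alpha\uparrow\chi$ and the two-step summands at $\mathfrak p=\alpha^\vee\widetilde S$ couple precisely along $E$.

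Your argument for condition~(1), however, has a genuine gap. The step ``a Verma flag of $M$ can then be arranged so that the $m_\chi$ copies of $\mathcal V_{\widetilde S}(\chi)$ appear as the top subquotients'' is unjustified, and is in fact equivalent to what you are trying to prove. Having a surjection $M\twoheadrightarrow N$ with $N$ free of rank $m_\chi$ supported on $\{\chi\}$ is the same as having $M^\chi$ free: one direction is your argument, and conversely if $M^\chi$ is free then $M\twoheadrightarrow M^\chi\cong\mathcal V_{\widetilde S}(\chi)^{\oplus m_\chi}$ gives the desired filtration. On the $\overline{\mathcal O}$ side the rearrangement fails outright: for $\chi$ maximal and $\chi\ne\mu$, any map $\overline P^{\{\le\chi\}}_{\mathbb C}(\mu)\to\overline\Delta_{\mathbb C}(\chi)$ must kill the submodule $\overline\Delta_{\mathbb C}(\chi)$ (otherwise the extension splits), hence factors through $\overline\Delta_{\mathbb C}(\mu)$ and is never surjective. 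In $\mathcal C_{\widetilde S}(\mathcal K)$ your argument, as written, would show $M^\chi$ free for \emph{every} object admitting a Verma flag, which is precisely the nontrivial hypothesis~(1) of Proposition~\ref{prop-projinC}, not a consequence of the definition.

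The paper's proof of this claim is substantively different. It constructs explicit maps $f_i\colon P\to\overline\Delta_{\widetilde S}(\chi)$ lifting a $\mathbb C$-basis of $\operatorname{Hom}_{\overline{\mathcal O}_{\mathbb C}}(P_{\mathbb C},\overline\Delta_{\mathbb C}(\chi))$, and shows that the induced map $(\mathbb V f)^\chi\colon(\mathbb V P)^\chi\to\mathbb V\overline\Delta_{\widetilde S}(\chi)^{\oplus r}$ is an isomorphism. Surjectivity is established after base change to $\mathbb C$ via a diagram chase whose key input is Lemma~\ref{lemma-antidom} (uniqueness and containment of highest-weight submodules of $\overline\Delta_{\mathbb C}(\chi)$), which guarantees that $\mathbb V_{\mathbb C}$ is injective on the relevant Hom-space. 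This is exactly where the multiplicity-freeness of $\overline P^{\mathcal J}_{\widetilde S}(\lambda)$ and the $\delta$-condition on $\mathcal K$ enter; your sketch never invokes either hypothesis for condition~(1), which is a further indication that an essential ingredient is missing.
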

\begin{proof} Again let us abbreviate ${\overline{P}}_{{\widetilde S}}^{\mathcal J}(\mu)$ by $P$. In order to prove the proposition, we  show that ${\mathbb V} P$ satisfies the two assumptions listed in Proposition \ref{prop-projinC}.  We have to check the following.

{\bf Claim 1:} For any $\chi\in{{\mathcal K}}$, the ${\widetilde S}$-module $({\mathbb V} P)^\chi$ is free. 

Let $r$ be the multiplicity of ${\overline{\Delta}} _{{\widetilde S}}(\chi)$ in $P$. Hence $r=\dim_{\mathbb C}\operatorname{Hom}_{{\overline{\mathcal O}} _{\mathbb C}}(P_{\mathbb C},{\overline{\Delta}} _{\mathbb C}(\chi))$. Let $f_1^\prime,\dots,f_r^\prime$ be a ${\mathbb C}$-basis of this space and denote by $f_i\in\operatorname{Hom}_{{\overline{\mathcal O}} _{{\widetilde S}}}(P,{\overline{\Delta}} _{{\widetilde S}}(\chi))$ a lift of $f_i^\prime$. Then set $f=(f_1,\dots,f_r)^T\colon P\to{\overline{\Delta}} _{{\widetilde S}}(\chi)^{\oplus r}$. We consider now ${\mathbb V} f\colon{\mathbb V} P\to{\mathbb V}{\overline{\Delta}} _{{\widetilde S}}(\chi)^{\oplus r}$. As ${\mathbb V}{\overline{\Delta}} _{{\widetilde S}}(\chi)$ is supported on $\{\chi\}$, the map ${\mathbb V} f$ factors over a homomorphim $({\mathbb V} f)^\chi\colon ({\mathbb V} P)^\chi\to{\mathbb V}{\overline{\Delta}} _{{\widetilde S}}(\chi)^{\bigoplus r}$. We claim that $({\mathbb V} f)^\chi$  is an isomorphism. As ${\mathbb V}{\overline{\Delta}} _{{\widetilde S}}(\chi)$ is free as an ${\widetilde S}$-module, this then implies that $({\mathbb V} P)^\chi$ is free.

The injectivity of $({\mathbb V} f)^\chi$ follows from the fact that $({\mathbb V} P)^\chi$ and ${\mathbb V}{\overline{\Delta}} (\chi)$ are torsion free as  ${\widetilde S}$-modules, and $({\mathbb V} f)^\chi$ is an isomorphism after applying the base change functor $\cdot\otimes_{{\widetilde S}} Q$. We hence have to show that it is surjective. For this it suffices to show that ${\mathbb V} f$ is surjective. As ${\mathbb V}$ commutes with base change, it suffices to show that $({\mathbb V} f)_{\mathbb C}$ is surjective. Let us denote by $P_{\mathbb C}^\chi$ the image of $f_{\mathbb C}\colon P_{\mathbb C}\to{\overline{\Delta}} _{\mathbb C}(\chi)^{\oplus r}$. Then the following statements hold:
\begin{enumerate}
\item The natural homomorphism $\operatorname{Hom}_{{\overline{\mathcal O}} _{\mathbb C}}(P_{\mathbb C}^\chi,{\overline{\Delta}} _{\mathbb C}(\chi))\to \operatorname{Hom}_{{\overline{\mathcal O}} _{\mathbb C}}(P_{\mathbb C},{\overline{\Delta}} _{\mathbb C}(\chi))$ is an isomorphism.
\item We have ${\operatorname{im}}{\mathbb V}_{\mathbb C} f_{\mathbb C}={\mathbb V}_{\mathbb C} P_{\mathbb C}^\chi$.
\end{enumerate}
The first statement follows directly from the construction of $f$ and the definition of $P_{\mathbb C}^\chi$,  and the second is due to the projectivity of ${\overline{P}}_{\mathbb C}^{\mathcal J}(\lambda)$. Now we consider the commutative diagram

\centerline{
\xymatrix{
\operatorname{Hom}_{{\overline{\mathcal O}} _{\mathbb C}}(P_{\mathbb C}^\chi,{\overline{\Delta}} _{\mathbb C}(\chi))\ar[r]^a\ar[d]^{b}&\operatorname{Hom}_{{\mathcal{Z}}_{\mathbb C}({{\mathcal K}})}({\mathbb V}_{\mathbb C} P_{\mathbb C}^\chi,{\mathbb V}_{\mathbb C} {\overline{\Delta}} _{\mathbb C}(\chi))\ar[d] \\
\operatorname{Hom}_{{\overline{\mathcal O}} _{\mathbb C}}(P_{\mathbb C},{\overline{\Delta}} _{\mathbb C}(\chi))\ar[r]^c&\operatorname{Hom}_{{\mathcal{Z}}_{\mathbb C}({{\mathcal K}})}({\mathbb V}_{\mathbb C} P_{\mathbb C},{\mathbb V}_{\mathbb C} {\overline{\Delta}} _{\mathbb C}(\chi)).
}
}
\noindent
By Lemma \ref{lemma-antidom}, the homomorphism $c$ is injective. As we argued above,  $b$ is an isomorphism. Hence $a$ is injective and, as $\dim_{\mathbb C} \operatorname{Hom}_{{\overline{\mathcal O}} _{\mathbb C}}(P_{\mathbb C},{\overline{\Delta}} _{\mathbb C}(\chi))=r$, we deduce $\dim_{\mathbb C} 
\operatorname{Hom}_{{\mathcal{Z}}_{\mathbb C}({{\mathcal K}})}({\mathbb V}_{\mathbb C} P_{\mathbb C}^\chi,{\mathbb V}_{\mathbb C} {\overline{\Delta}} _{\mathbb C}(\chi))\ge r$. As ${\mathbb V}_{\mathbb C}{\overline{\Delta}} _{\mathbb C}(\chi)$ is one-dimensional as a vector space, this implies that $\dim_{\mathbb C} {\mathbb V}_{\mathbb C} P_{\mathbb C}^\chi\ge r$. Using the remark above, we obtain that ${\operatorname{im}} {\mathbb V}_{\mathbb C} f_{\mathbb C}\subset{\mathbb V}_{\mathbb C}{\overline{\Delta}} _{\mathbb C}(\chi)^{\oplus r}$ is a subvector space of dimension at least $r$, but as the vector space on the right has dimension $r$, we obtain 
${\operatorname{im}} {\mathbb V}_{\mathbb C} f_{\mathbb C}={\mathbb V}_{\mathbb C}{\overline{\Delta}} _{\mathbb C}(\chi)^{\oplus r}$, i.e.\ , $({\mathbb V} f)_{\mathbb C}={\mathbb V}_{\mathbb C} f_{\mathbb C}$ is surjective, hence $({\mathbb V} f)^\chi$ is surjective.

{\bf Claim 2:} Let $E\colon\chi\stackrel{\alpha^\vee}{\text{---\!\!\!---\!\!\!---}}\chi^\prime$ be an edge, and suppose that $\chi<\chi^\prime$. Then $({\mathbb V} P)^E_\chi=\alpha^\vee ({\mathbb V} P)^\chi$. 

Note first that $\chi^\prime=s_{\alpha+n\delta}\chi$ for some $n\in{\mathbb Z}$. As $\chi$ and $\chi^\prime$ are contained in the critical hyperplane $\Lambda$ we have that $s_{\alpha+m\delta}s_{\alpha+n\delta}\chi-\chi$ is a multiple of $\delta$. Our assumption on ${\mathcal K}$ hence implies that the $\langle s_{\alpha+m\delta}\mid m\in{\mathbb Z}\rangle$ orbit through $\chi$ intersected with ${{\mathcal K}}$ is $\{\chi,\chi^\prime\}$. The description of the subgeneric projectives in Proposition \ref{prop-strucproj} then shows that $P\otimes_{{\widetilde S}} {{\widetilde S}}_\alpha$ is a non-split extension of ${\overline{\Delta}} _{{{\widetilde S}}_\alpha}(\chi)$ and ${\overline{\Delta}} _{{{\widetilde S}}_\alpha}(\chi^\prime)$. Once this is established, we can argue as in the proof of Proposition 7.2 in \cite{FieAdv}.
\end{proof}

\section{The main result}\label{sec-mT}

As before we fix a $\sim_{{\widetilde S}}$-equivalence class $\Lambda$ in ${\widehat{\mathfrak h}^\star}_{{\operatorname{crit}}}$. 
Let  ${\mathcal J}$ be an open and locally bounded subset of $\Lambda$ and fix $\lambda\in{\mathcal J}$. We set ${\mathcal K}={\mathcal J}\cap\{\ge\lambda\}$. We assume that our data satisfies the properties mentioned in the introduction to Section \ref{sec-ST}. We now consider the subquotient category ${\widetilde{\mathcal O}} _{{\widetilde S}}:={\widetilde{\mathcal O}} _{{\widetilde S},\Lambda}^{[{\mathcal K}]}$, and in particular its full subcategory ${\widetilde{\mathcal O}} _{{\widetilde S}}^{V}$ of objects that admit a Verma flag.  We define ${\widetilde{\mathcal Z}}_{{\widetilde S}}:={\mathcal{Z}}_{{\widetilde S}}({\mathcal K})$, and we denote by ${\widetilde{\mathcal C}}_{{\widetilde S}}$ the subcategory of ${\widetilde{\mathcal Z}}_{{\widetilde S}}{\operatorname{-mod}}$ that contains all objects that admit a Verma flag. For $\mu\in{{\mathcal K}}$ we set
 ${\widetilde{P}} _{{\widetilde S}}(\mu):=T {\overline{P}}^{{\mathcal J}}_{{\widetilde S}}(\mu)$.  Note that by Lemma \ref{lemma-projinquot} we have $\operatorname{End}_{{\widetilde{\mathcal O}} _{{\widetilde S}}}({\widetilde{P}} (\lambda))=\operatorname{End}_{{\overline{\mathcal O}} _{{\widetilde S}}}({\overline{P}}_{{\widetilde S}}^{\mathcal J}(\lambda))={\widetilde{\mathcal Z}}_{{\widetilde S}}$.
 Hence we can consider the Strukturfunktor 
$$
{\widetilde{\mathbb V}}:=\operatorname{Hom}_{{\widetilde{\mathcal O}}_{{\widetilde S}}}({\widetilde{P}}_{{\widetilde S}}(\lambda),\cdot)\colon {\widetilde{\mathcal O}} _{{\widetilde S}}\to{\widetilde{\mathcal Z}}_{{\widetilde S}}{\operatorname{-mod}}
$$
for the quotient category. By Lemma \ref{lemma-projinquot} we have an isomorphism
$$
{\widetilde{\mathbb V}}\circ T={\mathbb V}\colon {\overline{\mathcal O}} _{{\widetilde S}}\to{\widetilde{\mathcal Z}}_{{\widetilde S}}{\operatorname{-mod}}.
$$
\begin{Fiebiglemma} \begin{enumerate}
\item  If $M$ is an object in ${\widetilde{\mathcal O}} _{{\widetilde S}}$ that admits a Verma flag, then ${\widetilde{\mathbb V}} M$ admits a Verma flag as well.
\item The induced functor ${\widetilde{\mathbb V}}\colon{\widetilde{\mathcal O}} _{{\widetilde S}}^{V}\to{\widetilde{\mathcal C}}_{{\widetilde S}}$ is exact (with respect to the non-standard exact structure on ${\widetilde{\mathcal C}}_{{\widetilde S}}$ that we defined in Section \ref{sec-defC}).
\end{enumerate}
\end{Fiebiglemma}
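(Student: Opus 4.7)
The plan is to reduce both parts to the corresponding results for ${\mathbb V}$ established in Proposition~\ref{prop-propV} via the already noted identity ${\widetilde{\mathbb V}}\circ T={\mathbb V}$. The preliminary observation is that since $\operatorname{supp}_{{\overline{\Delta}}}{\overline{P}}_{{\widetilde S}}^{\mathcal J}(\lambda)={\mathcal K}$, all weights of ${\overline{P}}_{{\widetilde S}}^{\mathcal J}(\lambda)$ lie in ${\mathcal K}_+$, so by Theorem~\ref{thm-propproj}(1) we have ${\overline{P}}_{{\widetilde S}}^{\mathcal J}(\lambda)={\overline{P}}_{{\widetilde S}}^{{\mathcal K}_+}(\lambda)$, and Lemma~\ref{lemma-projinquot} supplies the identification.

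The technical heart is the following lifting claim, which I would prove by induction on the length of a Verma flag: \emph{every $M\in{\widetilde{\mathcal O}}_{{\widetilde S}}^V$ is isomorphic to $TN$ for some $N\in{\overline{\mathcal O}}_{{\widetilde S},\Lambda}^{{\mathcal K}_+,V}$, and every short exact sequence in ${\widetilde{\mathcal O}}_{{\widetilde S}}^V$ is the $T$-image of a short exact sequence of objects admitting restricted Verma flags in ${\overline{\mathcal O}}_{{\widetilde S},\Lambda}^{{\mathcal K}_+}$.} The base case $M={\widetilde{\Delta}}_{{\widetilde S}}(\mu)=T{\overline{\Delta}}_{{\widetilde S}}(\mu)$ is trivial. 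For the inductive step, apply Theorem~\ref{thm-propquot}(2) to lift a short exact sequence $0\to M'\to M\to{\widetilde{\Delta}}(\mu_n)\to 0$ in ${\widetilde{\mathcal O}}_{{\widetilde S}}$ to a short exact sequence in ${\overline{\mathcal O}}_{{\widetilde S},\Lambda}^{{\mathcal K}_+}$, and then modify the lift so that its outer terms are precisely ${\overline{\Delta}}(\mu_n)$ (using the universal property of the Verma module and the fact that the right-hand lift is generated by a vector of weight $\mu_n$) and the inductively chosen lift $N'$ of $M'$ (using that any two lifts of the same object in ${\widetilde{\mathcal O}}$ differ by submodules and quotients in the Serre subcategory ${\overline{\mathcal O}}_{{\widetilde S},\Lambda}^{{\mathcal K}_-}$). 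The middle term then becomes an extension of ${\overline{\Delta}}(\mu_n)$ by $N'$ and inherits a restricted Verma flag of length one greater than that of $N'$.

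Granted the claim, part~(1) follows at once: ${\widetilde{\mathbb V}} M={\widetilde{\mathbb V}} TN={\mathbb V} N$ admits a Verma flag by Proposition~\ref{prop-propV}(1), and hence lies in ${\widetilde{\mathcal C}}_{{\widetilde S}}={\mathcal C}_{{\widetilde S}}({\mathcal K})$. For part~(2), lift a given short exact sequence in ${\widetilde{\mathcal O}}_{{\widetilde S}}^V$ to a short exact sequence in ${\overline{\mathcal O}}_{{\widetilde S},\Lambda}^{{\mathcal K}_+,V}$ by the claim, apply Proposition~\ref{prop-propV}(2) to obtain a non-standard short exact sequence in ${\mathcal C}_{{\widetilde S}}({\mathcal K})$, and identify the result via ${\widetilde{\mathbb V}}\circ T={\mathbb V}$ with the ${\widetilde{\mathbb V}}$-image of the original sequence.

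The main obstacle is the modification step inside the inductive claim: Theorem~\ref{thm-propquot}(2) produces only \emph{some} lift of a short exact sequence, with outer terms determined only up to objects in ${\overline{\mathcal O}}_{{\widetilde S},\Lambda}^{{\mathcal K}_-}$. Replacing these outer terms by ${\overline{\Delta}}(\mu_n)$ and $N'$ respectively while preserving both the extension structure and the $T$-image is the delicate point; it requires a careful diagram chase exploiting the descriptions of $M_+$ and $N_-$ from the unnumbered lemma preceding Proposition~\ref{prop-propquot}, together with the fact that ${\overline{\Delta}}(\mu_n)$ is a highest weight module with highest weight $\mu_n\in{\mathcal K}$.
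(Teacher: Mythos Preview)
Your strategy of lifting everything back to ${\overline{\mathcal O}}_{{\widetilde S},\Lambda}^{{\mathcal K}_+}$ is workable in spirit but is both harder than necessary and, as written, incomplete. The paper avoids lifting entirely: it argues directly in the quotient category. Since ${\widetilde{P}}_{{\widetilde S}}(\lambda)$ is projective in ${\widetilde{\mathcal O}}_{{\widetilde S}}$ (Proposition~\ref{prop-propquot}), the functor ${\widetilde{\mathbb V}}=\operatorname{Hom}({\widetilde{P}}_{{\widetilde S}}(\lambda),\cdot)$ is exact for the ordinary abelian structure, and ${\widetilde{\mathbb V}}({\widetilde{\Delta}}_{{\widetilde S}}(\mu))={\mathbb V}({\overline{\Delta}}_{{\widetilde S}}(\mu))\cong{\mathcal V}_{{\widetilde S}}(\mu)$ via ${\widetilde{\mathbb V}}\circ T={\mathbb V}$. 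From this one reads off directly that ${\widetilde{\mathbb V}}(M)^{{\mathcal J}'}$ is ${\widetilde S}$-free for every open ${\mathcal J}'$, which is the Verma flag condition in ${\widetilde{\mathcal C}}_{{\widetilde S}}$; the same argument gives exactness for the non-standard structure. No lift of $M$ or of any sequence is ever needed.

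Your argument has a genuine gap beyond the ``modification step'' you flag. Your induction is on the length of a Verma flag and treats only short exact sequences of the form $0\to M'\to M\to{\widetilde{\Delta}}(\mu_n)\to 0$, i.e.\ with a \emph{single} Verma module as quotient. This suffices (modulo the modification) for the object-lifting half of your claim, but it does \emph{not} establish the second half, that an arbitrary short exact sequence $0\to M_1\to M_2\to M_3\to 0$ in ${\widetilde{\mathcal O}}_{{\widetilde S}}^V$ lifts to a short exact sequence of Verma-flag objects. Yet that second half is exactly what you invoke for part~(2). Knowing that each $M_i$ individually lifts to some $N_i$ with a Verma flag does not give you compatible lifts of the morphisms, nor does Theorem~\ref{thm-propquot}(2) hand you a lift whose three terms simultaneously admit Verma flags; arranging this would require a further, and more intricate, modification argument than the one you sketch for the inductive step. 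The paper's direct approach sidesteps all of this.
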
 
\begin{proof}  We have ${\widetilde{\mathbb V}}({\widetilde{\Delta}} _{{\widetilde S}}(\mu))={\widetilde{\mathbb V}}(T{\overline{\Delta}} _{{\widetilde S}}(\mu))={\mathbb V}({\overline{\Delta}} _{{\widetilde S}}(\mu))\cong{\mathcal V}_{{\widetilde S}}(\mu)$. As ${\widetilde{\mathbb V}}$ is an exact functor (with respect to the standard exact structure on ${\widetilde{\mathcal Z}}_{{\widetilde S}}{\operatorname{-mod}}^f$), we deduce $\operatorname{supp}_{{\widetilde{\Delta}} } M=\operatorname{supp}_{{\mathcal C}} {\widetilde{\mathbb V}}(M)$ and hence ${\widetilde{\mathbb V}}(M^{{\mathcal J}^\prime})=({\widetilde{\mathbb V}}(M))^{{\mathcal J}^\prime}$ for any open subset ${\mathcal J}^\prime$ of $\Lambda$. As $M^{{\mathcal J}^\prime}$ admits a Verma flag as well, ${\widetilde{\mathbb V}}(M)^{{\mathcal J}^\prime}$ is free as an ${\widetilde S}$-module. The last argument also proves that ${\widetilde{\mathbb V}}$ is exact even for the non-standard exact structure on ${\widetilde{\mathcal C}}_{{\widetilde S}}$.
\end{proof}

Now we consider ${\widetilde{P}} :=\bigoplus_{\mu\in{{\mathcal K}}}{\widetilde{P}} (\mu)$ and we set ${\mathbf{A}}:=\operatorname{End}_{{\widetilde{\mathcal O}} _{{\widetilde S}}}({\widetilde{P}} )$. Then the functor $\operatorname{Hom}_{{\widetilde{\mathcal O}} _{{\widetilde S}}}({\widetilde{P}} ,\cdot)\colon{\widetilde{\mathcal O}} _{{\widetilde S}}\to{\operatorname{mod-}}{\mathbf{A}}$ is an equivalence of categories and we denote by ${\mathcal{A}}\subset{\operatorname{mod-}}{\mathbf{A}}$ the image of the subcategory ${\widetilde{\mathcal O}} _{{\widetilde S}}^{V}$.   
The functorial homomorphism ${\mathbf{A}}=\operatorname{End}_{{\widetilde{\mathcal O}} }({\widetilde{P}} _{{\widetilde S}})\to\operatorname{End}_{{\widetilde{\mathcal C}}_{{\widetilde S}}}({\widetilde{\mathbb V}}_{{\widetilde S}} {\widetilde{P}} _{{\widetilde S}})$ is an isomorphism, as it identifies with the functorial homomorphism $\operatorname{End}_{{\overline{\mathcal O}} _{{\widetilde S}}}(\bigoplus_{\mu\in{{\mathcal K}}}{\overline{P}}_{{\widetilde S}}^{\mathcal J}(\mu))\to \operatorname{End}_{{\widetilde{\mathcal C}}_{{\widetilde S}}}({\mathbb V} \bigoplus_{\mu\in{{\mathcal K}}}{\overline{P}}_{{\widetilde S}}^{\mathcal J}(\mu))$ (by Lemma \ref{lemma-projinquot}), and by the Struktursatz the latter homomorphism is an isomorphism. So from now on we identify $\operatorname{End}_{{\widetilde{\mathcal Z}}_{{\widetilde S}}}({\mathbb V} {\widetilde{P}} _{{\widetilde S}})$ with ${\mathbf{A}}$ via this isomorphism. In particular, we obtain a functor  \begin{align*}
{\mathbb Y}\colon{\widetilde{\mathcal C}}_{{\widetilde S}}&\to{\operatorname{mod-}}{\mathbf{A}},\\
M&\mapsto\operatorname{Hom}_{{\widetilde{\mathcal C}}_{{\widetilde S}}}({\widetilde{\mathbb V}} {\widetilde{P}} , M).
\end{align*}

\begin{Fiebiglemma} The image of the functor ${\mathbb Y}$ is contained in ${\mathcal{A}}\subset{\operatorname{mod-}}{\mathbf{A}}$. 
\end{Fiebiglemma}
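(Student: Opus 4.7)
My plan is to show that $\mathbb Y(M)\in{\mathcal A}$ for every $M\in{\widetilde{\mathcal C}}_{{\widetilde S}}$ by induction on the length of a Verma flag on $M$, using three ingredients: (i) ${\mathcal A}\subset\operatorname{mod-}{\mathbf A}$ is closed under extensions, (ii) the functor $\mathbb Y$ carries short exact sequences in the non-standard exact structure on ${\widetilde{\mathcal C}}_{{\widetilde S}}$ to short exact sequences in $\operatorname{mod-}{\mathbf A}$, and (iii) $\mathbb Y({\mathcal V}_{{\widetilde S}}(\mu))\in{\mathcal A}$ for every $\mu\in{\mathcal K}$. Given these, if $0=M_0\subset M_1\subset\dots\subset M_n=M$ is a Verma flag on $M$ with subquotients $M_i/M_{i-1}\cong{\mathcal V}_{{\widetilde S}}(\mu_i)$ (each quotient step being short exact in the non-standard structure), applying $\mathbb Y$ gives short exact sequences $0\to\mathbb Y(M_{i-1})\to\mathbb Y(M_i)\to\mathbb Y({\mathcal V}_{{\widetilde S}}(\mu_i))\to 0$; by induction and (iii) both ends lie in ${\mathcal A}$, whence by (i) so does $\mathbb Y(M_i)$.

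Ingredient (i) follows from the fact that ${\widetilde{\mathcal O}}_{{\widetilde S}}^{V}$ is closed under extensions in ${\widetilde{\mathcal O}}_{{\widetilde S}}$---a Verma flag on two outer terms concatenates to one on the middle term---transported through the equivalence $\operatorname{Hom}_{{\widetilde{\mathcal O}}_{{\widetilde S}}}({\widetilde{P}} ,\cdot)\colon{\widetilde{\mathcal O}}_{{\widetilde S}}\stackrel{\sim}\to\operatorname{mod-}{\mathbf A}$, whose restriction to ${\widetilde{\mathcal O}}_{{\widetilde S}}^{V}$ has image ${\mathcal A}$ by definition. Ingredient (ii) is immediate from Proposition \ref{prop-VPproj}: since ${\widetilde{\mathbb V}} {\widetilde{P}} ={\mathbb V} {\overline{P}}^{{\mathcal J}}_{{\widetilde S}}(\lambda)$ (via ${\widetilde{\mathbb V}}\circ T={\mathbb V}$) is projective in ${\widetilde{\mathcal C}}_{{\widetilde S}}={\mathcal C}_{{\widetilde S}}({\mathcal K})$ with respect to the non-standard exact structure, the functor $\mathbb Y=\operatorname{Hom}_{{\widetilde{\mathcal C}}_{{\widetilde S}}}({\widetilde{\mathbb V}} {\widetilde{P}} ,\cdot)$ is exact for that structure.

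For ingredient (iii), combine the identity ${\mathcal V}_{{\widetilde S}}(\mu)={\widetilde{\mathbb V}} {\widetilde{\Delta}} _{{\widetilde S}}(\mu)$ from Proposition \ref{prop-propV}(3) with a quotient-category version of the Struktursatz to obtain
\[
\mathbb Y({\mathcal V}_{{\widetilde S}}(\mu))=\operatorname{Hom}_{{\widetilde{\mathcal C}}_{{\widetilde S}}}({\widetilde{\mathbb V}} {\widetilde{P}} ,{\widetilde{\mathbb V}} {\widetilde{\Delta}} _{{\widetilde S}}(\mu))\cong\operatorname{Hom}_{{\widetilde{\mathcal O}}_{{\widetilde S}}}({\widetilde{P}} ,{\widetilde{\Delta}} _{{\widetilde S}}(\mu)),
\]
which lies in ${\mathcal A}$ since ${\widetilde{\Delta}} _{{\widetilde S}}(\mu)\in{\widetilde{\mathcal O}}_{{\widetilde S}}^{V}$. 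The isomorphism is produced by first using Lemma \ref{lemma-projinquot} to rewrite both Hom-spaces as Hom-spaces in ${\overline{\mathcal O}}_{{\widetilde S},\Lambda}^{{\mathcal K}_+}$ (out of ${\overline{P}}^{{\mathcal K}_+}_{{\widetilde S}}(\lambda)$ into the $T$-lifts corresponding to ${\widetilde{P}}$ and ${\widetilde{\Delta}} _{{\widetilde S}}(\mu)$), and then applying the Struktursatz (Theorem \ref{thm-ff}) together with the identification ${\widetilde{\mathbb V}}\circ T={\mathbb V}$ to match those spaces with each other.

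The main obstacle is ingredient (iii): a Struktursatz for the quotient category is not directly stated, and must be assembled from Lemma \ref{lemma-projinquot} (which handles the Gabriel quotient by ${\overline{\mathcal O}}_{{\widetilde S},\Lambda}^{{\mathcal K}_-}$) and Theorem \ref{thm-ff} (which handles the passage from ${\overline{\mathcal O}}$ to ${\mathcal C}$); one must verify that the composition yields a canonical isomorphism compatible with ${\widetilde{\mathbb V}}\circ T={\mathbb V}$. A secondary technical point is confirming that the Verma filtration on $M$ is short exact in the non-standard structure, but this is built into the definition of ${\mathcal C}_{{\widetilde S}}({\mathcal K})$ via the requirement that $M^{{\mathcal J}'}$ be free over ${\widetilde S}$ for every open ${\mathcal J}'$. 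All other needed facts---closure under extensions and exactness of $\mathbb Y$---are drawn directly from the previous sections.
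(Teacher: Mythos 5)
Your proposal is correct and follows essentially the same path as the paper: exactness of $\mathbb Y$ via projectivity of ${\widetilde{\mathbb V}}{\widetilde P}$ (Proposition~\ref{prop-VPproj}), reduction to the standard objects ${\mathcal V}_{{\widetilde S}}(\mu)$, and the identification ${\mathbb Y}{\mathcal V}_{{\widetilde S}}(\mu)\cong\operatorname{Hom}_{{\widetilde{\mathcal O}}_{{\widetilde S}}}({\widetilde P},{\widetilde\Delta}_{{\widetilde S}}(\mu))$ via the Struktursatz (Theorem~\ref{thm-ff}) transported through Lemma~\ref{lemma-projinquot}. Your spelled-out induction on flag length and your careful assembly of the quotient-category Struktursatz make explicit what the paper's one-line appeal to Theorem~\ref{thm-ff} leaves implicit, but the argument is the same.
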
 
\begin{proof} By Proposition \ref{prop-VPproj}, ${\widetilde{\mathbb V}} {\widetilde{P}} $ is a projective object in ${\widetilde{\mathcal C}}_{{\widetilde S}}$. Hence the functor ${\mathbb Y}\colon{\widetilde{\mathcal C}}_{{\widetilde S}}\to{\operatorname{mod-}}{\mathbf{A}}$ is exact (we consider the natural exact structure on ${\operatorname{mod-}}{\mathbf{A}}$). Hence we only have to show that ${\mathbb Y}{\mathcal V}_{{\widetilde S}}(\mu)$ is a Verma object in ${\operatorname{mod-}}{\mathbf{A}}$. This follows from the fact that ${\mathcal V}_{{\widetilde S}}(\mu)={\mathbb V}{\overline{\Delta}} _{{\widetilde S}}(\mu)$ and hence 
\begin{align*}
{\mathbb Y}{\mathcal V}_{{\widetilde S}}(\mu)&=\operatorname{Hom}_{{\widetilde{\mathcal C}}_{{\widetilde S}}}({\widetilde{\mathbb V}} {\widetilde{P}} ,{\mathbb V}{\overline{\Delta}} _{{\widetilde S}}(\mu))\\
&=\operatorname{Hom}_{{\widetilde{\mathcal O}} _{{\widetilde S}}}({\widetilde{P}} ,{\widetilde{\Delta}} _{{\widetilde S}}(\mu)),
\end{align*}
where we used Theorem \ref{thm-ff}.
\end{proof}

\begin{Fiebiglemma}   \label{lemma-comdiag} The diagram of functors

\centerline{
\xymatrix{
{\widetilde{\mathcal O}} ^{V}_{{\widetilde S}}\ar[dd]_{{\widetilde{\mathbb V}}}\ar[rrd]^{{\mathbb X}}&& \\
&&{\mathcal{A}} \\
{\mathcal C}_{{\widetilde S}}\ar[rru]^{{\mathbb Y}}&& 
}
}
\noindent
commutes naturally.
\end{Fiebiglemma}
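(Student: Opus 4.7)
The plan is to exhibit a natural isomorphism ${\mathbb X}(M) \cong {\mathbb Y}({\widetilde{\mathbb V}} M)$ given by the canonical assignment $\phi \mapsto {\widetilde{\mathbb V}}(\phi)$. This candidate is visibly natural in $M$ and, via the already-established isomorphism ${\mathbf A} \cong \operatorname{End}_{{\widetilde{\mathcal C}}_{{\widetilde S}}}({\widetilde{\mathbb V}}{\widetilde P})$, a right ${\mathbf A}$-module homomorphism; the content of the lemma is therefore that it is pointwise bijective.

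As a preparatory step, I would first verify that for each $\mu \in {\mathcal K}$ the projective ${\overline{P}}^{{\mathcal J}}_{{\widetilde S}}(\mu)$ already lies in ${\overline{\mathcal O}}^{{\mathcal K}_+}_{{\widetilde S},\Lambda}$. By BGG reciprocity (Theorem \ref{thm-propproj}(4)) its ${\overline{\Delta}}$-support is contained in $\{\nu \in {\mathcal J} : \nu \ge \mu \ge \lambda\} = {\mathcal K}$, so all its weights lie in ${\mathcal K}_+$, and Theorem \ref{thm-propproj}(1) together with uniqueness identifies ${\overline{P}}^{{\mathcal J}}_{{\widetilde S}}(\mu)$ with ${\overline{P}}^{{\mathcal K}_+}_{{\widetilde S}}(\mu)$. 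Consequently Lemma \ref{lemma-projinquot} applies directly to it, and the Struktursatz (Theorem \ref{thm-ff}) applies to any object of ${\overline{\mathcal O}}^{{\mathcal K}_+,V}_{{\widetilde S},\Lambda}$, which is contained in ${\overline{\mathcal O}}^{{\mathcal J},V}_{{\widetilde S},\Lambda}$.

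Next I would lift $M$ along $T$. Starting from a Verma flag of $M$ and iterating Theorem \ref{thm-propquot}(2), one obtains $N \in {\overline{\mathcal O}}_{{\widetilde S},\Lambda}$ with $TN \cong M$ carrying a restricted Verma flag whose subquotients ${\overline{\Delta}}_{{\widetilde S}}(\mu_i)$, $\mu_i \in {\mathcal K}$, are each supported on $\{\le \mu_i\} \subseteq {\mathcal K}_+$; hence $N \in {\overline{\mathcal O}}^{{\mathcal K}_+,V}_{{\widetilde S},\Lambda}$. The chain
\begin{align*}
\operatorname{Hom}_{{\widetilde{\mathcal O}}_{{\widetilde S}}}({\widetilde P},M)
 &\cong \operatorname{Hom}_{{\overline{\mathcal O}}_{{\widetilde S}}}({\overline{P}}^{{\mathcal J}}_{{\widetilde S}},N) \\
 &\cong \operatorname{Hom}_{{\mathcal C}_{{\widetilde S}}({\mathcal K})}({\mathbb V}{\overline{P}}^{{\mathcal J}}_{{\widetilde S}},{\mathbb V} N) \\
 &= \operatorname{Hom}_{{\widetilde{\mathcal C}}_{{\widetilde S}}}({\widetilde{\mathbb V}}{\widetilde P},{\widetilde{\mathbb V}} M),
\end{align*}
whose successive steps use Lemma \ref{lemma-projinquot}, Theorem \ref{thm-ff}, and the identity ${\widetilde{\mathbb V}}\circ T = {\mathbb V}$, provides the desired bijection; unwinding the explicit descriptions of the three identifications shows that the composite is exactly $\phi \mapsto {\widetilde{\mathbb V}}(\phi)$.

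The main subtlety is that the lifting $M \mapsto N$ is not functorial, so naturality in $M$ does not follow verbatim from the display above. This is harmless, since the composite isomorphism agrees with the manifestly functorial map $\phi \mapsto {\widetilde{\mathbb V}}(\phi)$, and so naturality is automatic once bijectivity has been established. A cleaner alternative that avoids the lifting entirely is to check the assertion first on the Verma objects ${\widetilde{\Delta}}_{{\widetilde S}}(\mu)$, where it reduces to the Struktursatz combined with Lemma \ref{lemma-projinquot}, and then extend to arbitrary $M$ by induction along a Verma flag, using the exactness of ${\widetilde{\mathbb V}}$, ${\mathbb Y}$, and ${\mathbb X}$ together with the five-lemma.
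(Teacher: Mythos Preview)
Your proposal is correct and follows exactly the paper's approach: the paper's one-line proof simply asserts that the functorial map $\phi \mapsto {\widetilde{\mathbb V}}(\phi)$ from ${\mathbb X}(M)=\operatorname{Hom}_{{\widetilde{\mathcal O}}_{{\widetilde S}}}({\widetilde{P}},M)$ to ${\mathbb Y}({\widetilde{\mathbb V}} M)=\operatorname{Hom}_{{\widetilde{\mathcal C}}_{{\widetilde S}}}({\widetilde{\mathbb V}}{\widetilde{P}},{\widetilde{\mathbb V}} M)$ is an isomorphism by the Struktursatz (Theorem~\ref{thm-ff}). You supply the bridging details the paper leaves implicit---the identification ${\overline{P}}^{{\mathcal J}}_{{\widetilde S}}(\mu)\cong{\overline{P}}^{{\mathcal K}_+}_{{\widetilde S}}(\mu)$ and the passage through Lemma~\ref{lemma-projinquot}---and your five-lemma alternative (reducing to $M={\widetilde{\Delta}}_{{\widetilde S}}(\mu)$ and using exactness of ${\mathbb X}$, ${\widetilde{\mathbb V}}$, ${\mathbb Y}$) is the cleanest way to carry this out, since the iterated use of Theorem~\ref{thm-propquot}(2) does not by itself guarantee that the lifted subquotients are restricted Verma modules.
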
 
\begin{proof}  The diagram commutes naturally as, by Theorem \ref{thm-ff}, the  functorial map ${\mathbb X}(M)= \operatorname{Hom}_{{\widetilde{\mathcal O}} _{{\widetilde S}}}({\widetilde{P}} ,M)\to\operatorname{Hom}_{{\widetilde{\mathcal C}}_{{\widetilde S}}}({\widetilde{\mathbb V}} {\widetilde{P}} ,{\widetilde{\mathbb V}} M)={\mathbb Y}({\widetilde{\mathbb V}} M)$ is an isomorphism.  
\end{proof}

\begin{Fiebiglemma} \label{lemma-Yfe}
The functor ${\mathbb Y}\colon {\widetilde{\mathcal C}}_{{\widetilde S}}\to{\mathcal{A}}$ is a faithful embedding, i.e.\  it is injective on isomorphism classes and on $\operatorname{Hom}$-spaces.
\end{Fiebiglemma}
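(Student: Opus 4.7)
The strategy is to establish the stronger property that $\mathbb{Y}$ is fully faithful; both conclusions of the lemma then follow formally. The key observation is that $\widetilde{\mathbb{V}}\widetilde{P} = \bigoplus_{\mu \in \mathcal{K}} \widetilde{\mathbb{V}}\widetilde{P}(\mu)$ is a projective generator of $\widetilde{\mathcal{C}}_{\widetilde{S}}$ (with respect to the non-standard exact structure of Section \ref{sec-defC}), together with the tautological identification
\[
\operatorname{Hom}_{\widetilde{\mathcal{C}}_{\widetilde{S}}}(\widetilde{\mathbb{V}}\widetilde{P}^{\oplus I}, N) = \mathbb{Y}(N)^{I} = \operatorname{Hom}_{\mathbf{A}}(\mathbf{A}^{\oplus I}, \mathbb{Y}(N)),
\]
coming from $\mathbf{A} = \operatorname{End}_{\widetilde{\mathcal{C}}_{\widetilde{S}}}(\widetilde{\mathbb{V}}\widetilde{P})$. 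Projectivity is Proposition \ref{prop-VPproj}. For generation, applying the exact functor $\mathbb{V}$ of Proposition \ref{prop-propV} to the short exact sequence $0 \to K \to \overline{P}^{\mathcal{J}}_{\widetilde{S}}(\mu) \to \overline{\Delta}_{\widetilde{S}}(\mu) \to 0$ yields a non-standard short exact sequence, hence a non-standard surjection $\widetilde{\mathbb{V}}\widetilde{P}(\mu) \to \mathcal{V}_{\widetilde{S}}(\mu)$ for every $\mu \in \mathcal{K}$. Inducting on the length of a Verma flag of $M \in \widetilde{\mathcal{C}}_{\widetilde{S}}$ and lifting successively via projectivity of the summands then produces a non-standard surjection $\pi\colon \widetilde{\mathbb{V}}\widetilde{P}^{\oplus I} \to M$.

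Faithfulness is now immediate: if $f\colon M \to N$ satisfies $\mathbb{Y}(f) = 0$, then each component of $f \circ \pi$ is the image under $\mathbb{Y}(f)$ of a coordinate of $\pi$ and hence zero, so $f \circ \pi = 0$; since any non-standard epi is an epi of the underlying $\widetilde{\mathcal{Z}}_{\widetilde{S}}$-modules, $f = 0$. For fullness, fix $\psi\colon \mathbb{Y}(M) \to \mathbb{Y}(N)$ in $\mathcal{A}$ and choose a non-standard projective presentation $\widetilde{\mathbb{V}}\widetilde{P}^{\oplus J} \stackrel{\alpha}{\to} \widetilde{\mathbb{V}}\widetilde{P}^{\oplus I} \stackrel{\pi}{\to} M \to 0$. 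The composite $\psi \circ \mathbb{Y}(\pi)\colon \mathbf{A}^{\oplus I} \to \mathbb{Y}(N)$ corresponds via the tautological identification to a unique $\tilde\psi'\colon \widetilde{\mathbb{V}}\widetilde{P}^{\oplus I} \to N$, and the relation $\mathbb{Y}(\tilde\psi' \circ \alpha) = \psi \circ \mathbb{Y}(\pi \circ \alpha) = 0$ combined with the faithfulness just proven forces $\tilde\psi' \circ \alpha = 0$. Hence $\tilde\psi'$ descends to a morphism $\tilde\psi\colon M \to N$ with $\mathbb{Y}(\tilde\psi) = \psi$, establishing fullness. Full faithfulness then implies both conclusions of the lemma.

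The main obstacle is the construction of the non-standard projective presentation, which hinges on successively lifting $\widetilde{\mathbb{V}}\widetilde{P}(\mu) \to \mathcal{V}_{\widetilde{S}}(\mu)$ through the Verma filtration of $M$. This requires checking that each step of the filtration yields a genuinely non-standard short exact sequence, so that the projectivity afforded by Proposition \ref{prop-VPproj} applies at each stage, and that the summands $\widetilde{\mathbb{V}}\widetilde{P}(\mu)$ can be indexed so as to cover every Verma subquotient that appears.
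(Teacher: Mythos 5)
Your strategy diverges substantially from the paper's, and it contains a gap that you flag yourself but do not close. The paper's proof is a one-line observation: since the Endomorphismensatz gives $\widetilde{\mathbb V}\widetilde P(\lambda) = \operatorname{End}_{\overline{\mathcal O}_{\widetilde S}}(\overline P^{\mathcal J}_{\widetilde S}(\lambda)) \cong \widetilde{\mathcal Z}_{\widetilde S}$, multiplying $\mathbb Y M = \operatorname{Hom}_{\widetilde{\mathcal Z}_{\widetilde S}}(\widetilde{\mathbb V}\widetilde P, M)$ by the idempotent $e_\lambda$ corresponding to the summand $\widetilde{\mathbb V}\widetilde P(\lambda)$ recovers $M$ itself as a $\widetilde{\mathcal Z}_{\widetilde S}$-module. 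Thus $M$ is a retract of $\mathbb Y M$ functorially in $M$, which at once gives injectivity on $\operatorname{Hom}$-spaces and on isomorphism classes with no need for generation, presentations, or fullness. Your proposal instead runs the classical Morita argument and aims to prove the stronger claim of full faithfulness.

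The gap in your argument is exactly where you locate it: the generator property of $\widetilde{\mathbb V}\widetilde P$ with respect to the non-standard exact structure. Proposition \ref{prop-VPproj} gives projectivity only; nothing in the paper asserts that every object of $\widetilde{\mathcal C}_{\widetilde S}$ admits an admissible (non-standard) epimorphism from a sum of the $\widetilde{\mathbb V}\widetilde P(\mu)$. Your inductive construction would require, first, that an object of $\widetilde{\mathcal C}_{\widetilde S}$ (defined by freeness of all truncations $M^{\mathcal J'}$, not by an explicit filtration) actually possesses a filtration by the $\mathcal V_{\widetilde S}(\mu)$ whose successive subquotients form non-standard short exact sequences, and second, that projectivity can be invoked at each step and the resulting map assembled into an admissible epi. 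Neither is automatic in an exact category, and both are left unestablished. The fullness step also needs the kernel of such an epi to again lie in $\widetilde{\mathcal C}_{\widetilde S}$, so that a two-step projective presentation exists; this again rests on the unproved generator claim. These facts can likely be extracted from the machinery of \cite{FieAdv}, but as written the argument is incomplete, and in any case the lemma does not require fullness. The idempotent trick proves exactly what is asserted with none of this overhead.
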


\begin{proof} Let $e_\lambda\in\operatorname{End}_{{\widetilde{\mathcal C}}_{{\widetilde S}}}(\bigoplus_{\mu\in{\mathcal K}}{\widetilde{\mathbb V}}{\widetilde{P}} (\mu))$ be the idempotent corresponding to the direct summand ${\widetilde{\mathbb V}} {\widetilde{P}} (\lambda)$. Then 
\begin{align*}
({\mathbb Y} M)e_\lambda &= \operatorname{Hom}_{{\widetilde{\mathcal Z}}_{{\widetilde S}}}({\widetilde{\mathbb V}} {\widetilde{P}} (\lambda), M) \\
&\cong \operatorname{Hom}_{{\widetilde{\mathcal Z}}_{{\widetilde S}}}({\widetilde{\mathcal Z}}_{{\widetilde S}}, M) \\
&=M
\end{align*} 
as a ${\widetilde{\mathcal Z}}_{{\widetilde S}}$-module.
\end{proof}

Here is our main result.

\begin{Fiebigtheorem}  The functor ${\widetilde{\mathbb V}}\colon {\widetilde{\mathcal O}} ^V_{{\widetilde S}}\to{\widetilde{\mathcal C}}_{{\widetilde S}}$ is an equivalence of categories.
\end{Fiebigtheorem}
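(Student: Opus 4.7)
The theorem will follow formally from the commutative triangle of Lemma \ref{lemma-comdiag} together with the properties of ${\mathbb X}$ and ${\mathbb Y}$ already established. The first thing I would make explicit is that ${\mathbb X}\colon{\widetilde{\mathcal O}}^V_{{\widetilde S}}\to{\mathcal{A}}$ is itself an equivalence of categories. Since ${\widetilde{P}}$ is a projective generator of ${\widetilde{\mathcal O}}_{{\widetilde S}}$ (its summands ${\widetilde{P}}(\mu)$ are projective by Proposition \ref{prop-propquot} and exhaust the projective covers of the simples supported on ${\mathcal K}$), the standard Morita argument gives an equivalence $\operatorname{Hom}_{{\widetilde{\mathcal O}}_{{\widetilde S}}}({\widetilde{P}},\cdot)\colon{\widetilde{\mathcal O}}_{{\widetilde S}}\stackrel{\sim}{\to}{\operatorname{mod-}}{\mathbf{A}}$, and ${\mathcal{A}}$ was defined precisely as the essential image of the full subcategory ${\widetilde{\mathcal O}}^V_{{\widetilde S}}$ under this equivalence.

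With that in hand, full faithfulness of ${\widetilde{\mathbb V}}$ is a short diagram chase. For $M,N\in{\widetilde{\mathcal O}}^V_{{\widetilde S}}$, Lemma \ref{lemma-comdiag} factors the bijection induced by ${\mathbb X}$ as
$$
\operatorname{Hom}_{{\widetilde{\mathcal O}}_{{\widetilde S}}}(M,N)\stackrel{{\widetilde{\mathbb V}}}{\to}\operatorname{Hom}_{{\widetilde{\mathcal C}}_{{\widetilde S}}}({\widetilde{\mathbb V}} M,{\widetilde{\mathbb V}} N)\stackrel{{\mathbb Y}}{\to}\operatorname{Hom}_{{\mathcal{A}}}({\mathbb X} M,{\mathbb X} N).
$$
Faithfulness of the first arrow is then automatic; for fullness I would take $g\colon{\widetilde{\mathbb V}} M\to{\widetilde{\mathbb V}} N$, lift ${\mathbb Y} g$ to a morphism $f\colon M\to N$ using fullness of ${\mathbb X}$, and then invoke injectivity of ${\mathbb Y}$ on Hom-spaces (Lemma \ref{lemma-Yfe}) to see that ${\widetilde{\mathbb V}} f=g$, since ${\mathbb Y}({\widetilde{\mathbb V}} f)={\mathbb X} f={\mathbb Y} g$. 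Essential surjectivity is the mirror chase: given $N\in{\widetilde{\mathcal C}}_{{\widetilde S}}$, essential surjectivity of ${\mathbb X}$ produces $M\in{\widetilde{\mathcal O}}^V_{{\widetilde S}}$ with ${\mathbb X} M\cong{\mathbb Y} N$, which by Lemma \ref{lemma-comdiag} reads ${\mathbb Y}{\widetilde{\mathbb V}} M\cong{\mathbb Y} N$, and the injectivity of ${\mathbb Y}$ on isomorphism classes (again Lemma \ref{lemma-Yfe}) forces ${\widetilde{\mathbb V}} M\cong N$.

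The theorem is thus a purely formal consequence of the preceding sections; the substantive input has already been dispatched in the Endomorphismensatz, the Struktursatz (Theorem \ref{thm-ff}), Proposition \ref{prop-VPproj} (projectivity of ${\widetilde{\mathbb V}}{\widetilde{P}}$), and Lemma \ref{lemma-Yfe}. The one point I would verify carefully in writing the argument out is the implicit claim that ${\widetilde{P}}$ is a projective generator of ${\widetilde{\mathcal O}}_{{\widetilde S}}$, so that the Morita step is warranted; this reduces to observing that every simple of ${\widetilde{\mathcal O}}_{{\widetilde S}}$ appears as a quotient of some $T{\overline{P}}^{{\mathcal J}}_{{\widetilde S}}(\mu)$ with $\mu\in{\mathcal K}$, which follows from the projectivity of ${\overline{P}}^{\mathcal J}_{{\widetilde S}}(\mu)$ together with the description of the subquotient category in terms of weight supports inside ${\mathcal K}_+\setminus{\mathcal K}_-$.
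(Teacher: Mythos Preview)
Your proposal is correct and follows essentially the same route as the paper's own proof: both deduce the equivalence from the commutative triangle of Lemma \ref{lemma-comdiag}, the fact that ${\mathbb X}$ is an equivalence onto ${\mathcal{A}}$, and the faithful-embedding property of ${\mathbb Y}$ from Lemma \ref{lemma-Yfe}. The paper compresses the diagram chase you spell out into a single sentence, and it treats the equivalence $\operatorname{Hom}_{{\widetilde{\mathcal O}}_{{\widetilde S}}}({\widetilde{P}},\cdot)\colon{\widetilde{\mathcal O}}_{{\widetilde S}}\stackrel{\sim}{\to}{\operatorname{mod-}}{\mathbf{A}}$ as already given in the setup rather than justifying it via a Morita argument; your flagging of that point is reasonable but does not represent a divergence in strategy.
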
 
\begin{proof} Consider the commutative diagram of functors that we established in Lemma \ref{lemma-comdiag}. 
Note that the functor ${\mathbb X}$ is an equivalence (by  definition of ${\mathcal{A}}$) and ${\mathbb Y}$ is a faithful embedding by Lemma \ref{lemma-Yfe}. Hence ${\widetilde{\mathbb V}}$ is an equivalence of categories.

\end{proof}

\end{document}